\newtheorem{theorem}{Theorem}[section]
\newtheorem{lemma}[theorem]{Lemma}
\newtheorem{proposition}[theorem]{Proposition}
\theoremstyle{definition}
\theoremstyle{remark}
\newtheorem{remark}[theorem]{Remark}
\def\R{{\mathbb R}}
\newcommand{\T}{\mathbb{T}}
\newcommand{\Z}{\mathbb{Z}}
\numberwithin{equation}{section}
\begin{document}

\title[The Cauchy problem for the BO equation in $L^2$]{The Cauchy problem for the Benjamin-Ono equation in $L^2$ revisited}
\author[L. Molinet and D. Pilod]{Luc Molinet $^1$ and Didier Pilod $^2$}

\subjclass[2000]{Primary 35Q53, 35A07; Secondary 76B55}
\keywords{Initial value problem, Benjamin-Ono equation, gauge
transformation}

\maketitle

\vspace{-0.5cm}

{\scriptsize \centerline{$^1$ LMPT, Université François Rabelais
Tours, Fédération Denis Poisson-CNRS,}
             \centerline{ Parc Grandmont, 37200 Tours, France.}
             \centerline{email: Luc.Molinet@lmpt.univ-tours.fr}

\vspace{0.1cm}
             \centerline{$^2$ UFRJ, Instituto de Matemática,
             Universidade Federal do Rio de Janeiro,}
              \centerline{Caixa Postal 68530, CEP: 21945-970, Rio
              de Janeiro, RJ, Brazil.}
              \centerline{email: pilod@impa.br, didier@im.ufrj.br}}

\vspace{0.5cm}

\begin{abstract} In a recent work \cite{IK}, Ionescu and Kenig proved that the Cauchy problem associated
to the Benjamin-Ono equation is well-posed in $L^2(\mathbb R)$. In
this paper we give a simpler proof of Ionescu and Kenig's result,
which  moreover provides stronger uniqueness results. In particular,
we prove unconditional well-posedness in $H^s(\mathbb R)$, for
$s>\frac14$.
\end{abstract}

\section{Introduction}

The Benjamin-Ono equation is one of the fundamental equation describing
the evolution of weakly nonlinear internal long waves. It has been derived by Benjamin
\cite{Be} as an approximate model for long-crested unidirectional waves at the interface of
a two-layer system of incompressible inviscid fluids, one being infinitely deep.
In nondimensional variables, the initial value problem (IVP) associated to the Benjamin-Ono
equation (BO) writes as
\begin{equation}\label{BO}
\left\{\begin{array}[pos]{ll}
           \partial_tu+\mathcal{H}\partial^2_xu=u\partial_xu\\
           u(x,0)=u_0(x),
       \end{array} \right.
\end{equation}
where $x \in \mathbb R$ or $\mathbb T$, $t \in \mathbb R$, $u$ is a
real-valued function, and $\mathcal{H}$ is the Hilbert transform,
\textit{i.e.}
\begin{equation} \label{hilbert}
\mathcal{H}f(x)=\text{p.v.}\,\frac1\pi\int_{\mathbb
R}\frac{f(y)}{x-y}dy.
\end{equation}
The Benjamin-Ono equation is, at least formally, completely
integrable \cite{AF} and thus possesses an infinite number of conservation
laws. For example, the momentum and the energy, respectively given
by
\begin{equation} \label{conservationlaws}
M(u)=\int u^2dx, \quad \text{and} \quad
E(u)=\frac12\int\big|D_x^{\frac12}u\big|^2dx+\frac16\int u^3dx,
\end{equation}
are conserved by the flow of \eqref{BO}.

The IVP associated to the Benjamin-Ono equation
presents interesting mathematical difficulties and has been extensively studied in the
recent years. In the continuous case, well-posedness in $H^s(\mathbb R)$ for $s>\frac32$ was
proved by Iorio \cite{Io} by using purely hyperbolic energy
methods (see also \cite{ABFS} for  global well-posedness in the same rage of $ s$ ). Then, Ponce
\cite{Po} derived a local smoothing effect associated to the
dispersive part of the equation, which combined to compactness
methods, enable to reach $s=\frac32$. This technique was refined by Koch
and Tzvetkov \cite{KT} and Kenig and Koenig \cite{KK} who reach respectively $s>\frac54$
and $s>\frac98$. On the other hand Molinet, Saut and Tzvetkov \cite{MST} proved that the
flow map associated to BO, when it exists, fails to be $C^2$ in any Sobolev space
$H^s(\mathbb R)$, $s \in \mathbb R$. This results is based on the fact that the
dispersive smoothing effects of the linear part of BO are not strong enough to control
the low-high frequency interactions appearing in the nonlinearity of \eqref{BO}.
It was improved  by Koch and
Tzvetkov \cite{KK} who showed that the flow map fails even to be uniformly
continuous in $H^s(\mathbb R)$ for  $s>0$ (see  \cite{BL} for
 the same result in the case $ s<-1/2$.)
As the consequence of those results, one cannot solve the Cauchy problem for the
Benjamin-Ono by a Picard iterative method implemented on the integral equation associated to
\eqref{BO} for initial data in the Sobolev space $H^s(\mathbb R)$, $s \in \mathbb R$.
In particular, the methods introduced by Bourgain \cite{Bo2} and
Kenig, Ponce and Vega \cite{KPV2}, \cite{KPV} for the Korteweg- de Vries equation do not apply
directly to the Benjamin-Ono equation.

Therefore, the problem to obtain well-posedness in less regular Sobolev
spaces turns out to be far from trivial. Due to the conservations laws
\eqref{conservationlaws}, $L^2(\mathbb R)$ and $H^{\frac12}(\mathbb R)$ are two natural spaces
where well-posedness is expected. In this direction, a decisive
breakthrough was achieved by Tao \cite{Ta}. By combining a complex variant of
the Cole-Hopf transform (which linearizes Burgers equation) with Strichartz estimates,
he proved well-posedness in $H^1(\mathbb R)$. More precisely, to obtain estimates
at the $H^1$-level, he introduced the new unknown
\begin{equation} \label{BOt3}
w=\partial_xP_{+hi}\big(e^{-\frac i2 F}\big),
\end{equation}
where $F$ is some spatial primitive of $u$ and $P_{+hi}$ denotes the projection on
high positive frequencies. Then $w$ satisfies an equation on the form
\begin{equation} \label{BOt}
\partial_tw-i\partial_x^2w=-\partial_xP_{+hi}\big(\partial_x^{-1}wP_-\partial_xu\big)
+\text{negligible terms}.
\end{equation}
Observe that, thanks to the frequency projections,  the nonlinear term appearing on the right-hand side of \eqref{BOt} does not
 exhibit  any low-high frequency interaction terms. Finally, to inverse this gauge transformation, one gets an equation on
the form
\begin{equation} \label{BOt2}
u=2ie^{\frac i2F}w+\text{negligible terms}.
\end{equation}

Very recently, Burq and Planchon \cite{BP}, and Ionescu and Kenig
\cite{IK} were able to use Tao's ideas in the context of Bourgain's
spaces to prove well-posedness for the Benjamin-Ono equation in
$H^s(\mathbb R)$ for respectively $s> \frac14 $ and $s \ge 0$. The
main difficulty arising here is that Bourgain's spaces do not enjoy
an algebra property, so that one is loosing regularity when
estimating $u$ in terms  of $w$ via equation \eqref{BOt2}. Burq and
Planchon first paralinearized the equation and then used a localized
version of the gauge transformation on the worst nonlinear term. On
the other hand, Ionescu and Kenig decomposed the solution in two
parts: the first one is the smooth solution of BO evolving from the
low frequency part of the initial data, while the second one solves
a dispersive system renormalized by a gauge transformation involving
the first part. The authors then were able to solve the system via a
fixed point argument in a dyadic version of Bourgain's spaces
(already used in the context of wave maps \cite{Tat}) with a special
structure in low frequencies. It is worth noticing that their result
only ensures the uniqueness in the class of limits of smooth
solutions, while Burq and Planchon obtained a stronger uniqueness
result. Indeed, by applying their approach  to the equation
satisfied by the difference of two solutions, they succeed in
proving that  the flow map associated to BO is Lipschitz in a weaker
topology when the initial data belongs to $H^s(\mathbb R)$, $s
>\frac14$.

In the periodic setting, Molinet \cite{Mo}, \cite{Mo2} proved
well-posedness in $H^s(\mathbb T)$ for successively $s \ge \frac12$
and $s \ge 0$. Once again, these works combined Tao's gauge
transformation with estimates in Bourgain's spaces. It should be
pointed out that in the periodic case, one can  assume that $u$ has
mean value zero to define a primitive. Then, it is easy to check by
the mean value theorem that the gauge transformation in \eqref{BOt3}
is Lipschitz from $L^2$ into $L^{\infty}$. This property, which is
not true in the real line, is crucial to prove the uniqueness and
the Lipschitz property of the flow map.

The aim of this paper is to give a simpler  proof of Ionescu and Kenig's
result, which also provides a stronger uniqueness result for the solutions at the $L^2$-
level. It is worth noticing that to reach $ L^2 $ in \cite{IK} or \cite{Mo2} the authors substituted $ u $ in \eqref{BOt3} by the formula given in \eqref{BOt2}.  The good side of this substitution is that now $ u $ will not appear anymore in
 \eqref{BOt3}. On the other hand, it introduces new  technical difficulties to handle the multiplication by
  $ e^{\mp iF/2} $ in Bourgain's spaces. In the present paper we are able to avoid this substitution which will really
    simplify the proof. Our main result is the following
\begin{theorem} \label{theo1}Let $s \ge 0$ be given. \mbox{ } \\
\underline{\it Existence :}
 For all $u_0 \in H^s(\mathbb R)$ and all $ T>0 $, there exists a
 solution
\begin{equation} \label{theo1.1}
u \in C([0,T];H^s(\mathbb R)) \cap X^{s-1,1}_T \cap L^4_TW^{s,4}_x
\end{equation}
of \eqref{BO} such that
\begin{equation} \label{theo1.1b}
w=\partial_xP_{+hi}\big(e^{-\frac{i}{2}F[u]}\big) \in Y_T^s.
\end{equation}
where $ F[u] $ is some primitive of $u$ defined in \eqref{defF}.\vspace{2mm}\\
\underline{\it Uniqueness :} This solution is unique in the following classes :
$$
\begin{array}{lll}
i) & u\in L^\infty(]0,T[;L^2(\R)) \cap L^4(]0,T[\times\R) \mbox{ and } w\in X^{0,\frac12}_T .\\
ii) &  u\in L^\infty(]0,T[;H^{s}(\R)) \cap L^4_T W^{s,4}_x & \mbox{ whenever } s>0 .\\
iii)  &  u\in L^\infty(]0,T[;H^{s}(\R))  & \mbox{ whenever } s>
\frac14 .
\end{array}
$$
Moreover, $u\in C_b(\mathbb R;L^2(\mathbb R))$ and the flow map
data-solution $:u_0 \mapsto u$ is continuous from $H^s(\mathbb R)$
into $C([0,T];H^s(\mathbb R))$.
\end{theorem}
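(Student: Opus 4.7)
The plan is to implement Tao's gauge transformation directly in the framework of Bourgain-type spaces (the dyadic space $Y^s$ of \cite{IK}, with an auxiliary space $X^{s-1,1}$ for $u$ itself), following the general scheme of Ionescu-Kenig but, as advertised, \emph{without} substituting $u$ via \eqref{BOt2} inside $F[u]$. This avoids having to control multiplications by $e^{\mp i F/2}$ in the Bourgain spaces. Instead, one keeps $F[u]$ as the genuine primitive of $u$ throughout, and couples two equations: the original BO equation for $u$ and the gauged Schrödinger-type equation for $w=\partial_x P_{+hi}(e^{-iF[u]/2})$.

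\textbf{Step 1 (derivation and multilinear estimates).} For smooth $u$ one differentiates $w$ in time using \eqref{BO} and arrives at an equation of the form \eqref{BOt}, where the main term $\partial_x P_{+hi}(\partial_x^{-1} w \, P_- \partial_x u)$ has all low--high frequency interactions removed by the projections. The central analytic work is then: (a) a linear estimate for the Schrödinger group in $Y^s$; (b) a bilinear estimate bounding the main nonlinear term, in the dual $Y^s$-type norm, by $\|w\|_{Y^s_T}\, \|u\|_{X^{s-1,1}_T\cap L^4_T W^{s,4}_x}$; (c) estimates on the remaining ``negligible'' terms; (d) an inversion estimate extracting $u\in L^\infty_T H^s\cap X^{s-1,1}_T \cap L^4_T W^{s,4}_x$ from $w\in Y^s_T$ via \eqref{BOt2}, using Strichartz estimates for the BO group. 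Crucially, (b) will rely on the fact that once the low frequencies of $w$ are excluded and the low frequencies of $u$ are isolated, one can place $w$ in $X^{s,1/2+}$-type norms and the low-frequency $u$-factor in $L^4_{t,x}$, thereby keeping everything inside the available spaces.

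\textbf{Step 2 (existence).} Smooth data are handled by Iorio's theorem \cite{Io}. For $u_0\in H^s(\mathbb R)$ with $s\ge 0$, regularize to $u_{0,n}\in H^\infty$ and consider the corresponding smooth global solutions $u_n$. The a priori estimates from Step 1 are closed on a time interval $T=T(\|u_0\|_{L^2})$, and the $L^2$ conservation law extends them to any time. This provides a uniformly bounded sequence in the space \eqref{theo1.1}, from which one extracts a limit $u$ and checks, by standard weak convergence and local compactness, that $u$ solves \eqref{BO} and that $w\in Y^s_T$.

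\textbf{Step 3 (uniqueness and continuity).} For uniqueness, consider two solutions $u_1,u_2$ with $w_1,w_2\in X^{0,1/2}_T$ and derive an equation for $w_1-w_2$ using the difference of the gauged equations, together with an inversion identity controlling $u_1-u_2$ in $L^\infty_T L^2 \cap L^4_T L^4_x$ from $w_1-w_2$. Running the estimates of Step 1 on the difference gives a Lipschitz-type bound in a weaker norm, hence (i). Classes (ii) and (iii) follow by reducing to (i) via persistence of regularity, using the Strichartz estimate $L^4_TW^{s,4}_x$ as bridge. Continuity of the flow in $C([0,T];H^s)$ is then obtained through a Bona-Smith argument combining the uniform bounds from Step 2 with the Lipschitz estimate in the weaker topology.

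\textbf{Main obstacle.} The hard part is Step 1(b) together with its difference analogue in Step 3: establishing the bilinear estimate for $\partial_x P_{+hi}(\partial_x^{-1} w \, P_- \partial_x u)$ in $Y^s_T$ while keeping $F[u]$ unsubstituted, and then rerunning it for $w_1-w_2$ at the $L^2$ level where the gauge is not Lipschitz from $L^2$ into $L^\infty$ on $\mathbb R$ (contrary to the periodic setting of \cite{Mo2}). Careful dyadic decomposition of the low frequencies and exploitation of the high-low structure inherited from the gauge transform are essential; the rest of the argument is then a relatively standard fixed point / compactness scheme.
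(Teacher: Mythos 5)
Your outline reproduces the paper's overall architecture (gauge transform kept with the genuine primitive $F[u]$, bilinear estimate for $\partial_xP_{+hi}(\partial_x^{-1}wP_-\partial_xu)$, smooth approximation, Bona--Smith-type continuity), but the two places where you are vague are precisely the two new ideas the proof needs, so as written there are genuine gaps. First, the bilinear estimate at $s=0$: you assert it and say one can ``place $w$ in $X^{s,1/2+}$-type norms and the low-frequency $u$-factor in $L^4_{t,x}$'', but the obstruction at $L^2$ is that one cannot replace $\widehat{u}$ by $|\widehat{u}|$ and run a Cauchy--Schwarz argument, because $\mathcal F^{-1}(|\widehat u|)\in L^4_{x,t}$ is not available (only $u\in L^4_{x,t}$ is). The paper's resolution is a Littlewood--Paley decomposition of all three factors combined with a partition of the integration domain according to which modulation dominates the resonance $|\xi\xi_2|$: in the regions where $\sigma$ or $\sigma_1$ dominates one sums the dyadic $L^4_{x,t}$ pieces (this is where the $\widetilde Z^{s,0}$ component of $Y^s$ and the square-function space $\widetilde L^4$ enter), and in the region where $\sigma_2$ dominates one pays with $\|u\|_{X^{-1,1}}$, which is why $u$ must be controlled a priori in $X^{-1,1}_T$. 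Simply invoking a bound by $\|w\|_{Y^s_T}\|u\|_{X^{s-1,1}_T\cap L^4_TW^{s,4}_x}$ does not address this.

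Second, and more seriously, your Step 3 does not close. The difference equation for $w_1-w_2$ and the inversion identity for $u_1-u_2$ both contain the factor $e^{-\frac{i}{2}F_1}-e^{-\frac{i}{2}F_2}$ measured in $L^\infty_{x,t}$, and on the line this cannot be bounded by $\|u_1-u_2\|_{L^\infty_TL^2_x}$ in general (as you yourself note, such a bound would contradict the failure of uniform continuity of the flow). You name this as the ``main obstacle'' but offer no mechanism to overcome it. The paper's device is to prove the Lipschitz bound only for pairs of solutions whose data share the same low-frequency part, $P_{LO}\varphi_1=P_{LO}\varphi_2$: then $P_{lo}(F_1-F_2)(\cdot,0)=0$, the high-frequency part of $F_1-F_2$ is controlled by Bernstein through $\|\partial_x(F_1-F_2)\|_{L^2}=\|u_1-u_2\|_{L^2}$, and the low-frequency part through Duhamel applied to the quadratic forcing (Lemma \ref{lemma4}). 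This restricted Lipschitz bound suffices because uniqueness compares solutions with identical data, and continuity is run through the approximating data $u_0^j=\mathcal F_x^{-1}(\chi_{[-j,j]}\mathcal F_xu_0)$, which all share the low-frequency part of $u_0$. Finally, your claim that classes $ii)$ and $iii)$ ``follow by persistence of regularity'' misses the actual content of Section 5: one must show that \emph{any} solution in $L^\infty_TH^s\cap L^4_TW^{s,4}_x$ automatically has its gauge in $X^{0,1/2}_T$, which requires a separate bilinear estimate at positive $s$ (Proposition \ref{bilin}, proved by the Kenig--Ponce--Vega weight method) together with a uniqueness argument for the $W$-equation in $X^{1/2,\frac12+}_T$; this is a statement about the gauge, not about persistence of Sobolev regularity of $u$.
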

Note that  above $H^s(\mathbb R)$ denotes the space of all real-valued
functions with the usual norm, $X_T^{s,b}$ and $Y^s_T$
are  Bourgain spaces  defined
in Subsection \ref{FS}, while the primitive $F[u]$ of $u$
is defined in Subsection \ref{GT}.
\begin{remark}  Since  the function spaces in the uniqueness class i) are reflexive and since $ \partial_x
P_{+hi}\big(e^{-\frac{i}{2}F[u_n]}\big) $ converges to $\partial_x  P_{+hi}\big(e^{-\frac{i}{2}F[u]}\big) $ in $ L^\infty(]-T,T[;L^2(\R)) $ whenever
 $ u_n $ converges to $ u$ in $ L^\infty(]-T,T[;L^2(\R)) $, our result clearly implies the uniqueness in the class of $ L^\infty(]-T,T[;L^2(\R)) $-limits of smooth solutions.
\end{remark}
\begin{remark}
 It is worth noticing that for $ s>0 $ we get a uniqueness class without condition on $ w$ (see \cite{BP}  for the case $ s>\frac14$).
\end{remark}
\begin{remark}
 According to $ iii) $ we get unconditional well-posedness in $ H^s(\R) $ for  $ s>\frac14 $. This implies  in particular the uniqueness of  the (energy) weak solutions  that belong to $ L^\infty(\R; H^{1/2}(\R)) $. These solutions  are constructed by regularizing the equation and passing to the limit as the regularizing coefficient goes to $0$ (taking into account  some energy estimate for the regularizing equation related to the energy conservation of \eqref{BO}) . \end{remark}

Our proof also combines Tao's ideas with the use of Bourgain's spaces. Actually,
it follows closely the strategy introduced by the first author in \cite{Mo}.
The main new  ingredient is a bilinear estimate for the nonlinear term appearing in \eqref{BOt},
which allows to recover one derivative at the $L^2$-level. It is interesting to note that, at the
$H^s$-level with $s>0$, this estimate follows from the Cauchy-Schwarz method introduced by Kenig, Ponce and Vega in \cite{KPV} (see the appendix for the use of this method in some region of integration). To reach $L^2$,  one of the main difficulty is that we cannot substitute  the Fourier transform of $u $  by its modulus in the bilinear estimate since
 we  are not able to prove that  $\mathcal{F}^{-1}(|\hat{u}|)$ belongs $L^4_{x,t}$ but only that  $u$ belongs to $ L^4_{x,t} $ .  To  overcome this difficulty we use a Littlewood-Paley decomposition of the functions and carefully divide the domain of integration into suitable  disjoint subdomains.

To obtain our uniqueness result, following the same method as in the periodic setting, we derive a Lipschitz bound
  for  the gauge transformation from some affine subspaces of
 $L^2(\R) $ into  $L^{\infty}(\mathbb R)$ . Recall that  this is clearly not possible for general initial data since it would imply  the uniform continuity of the flow-map. The main idea is to notice that such   Lipschitz bound holds for solutions emanating from  initial data having the same low frequency part and this is sufficient for our purpose.

Let us point out some applications. First our uniqueness result allows to really simplify the  proof of the continuity  of the flow map associated to the Benjamin-Ono equation for the weak topology of $  L^2(\mathbb R)$. This result was recently proved by Cui and Kenig \cite{CK}.

It is also interesting to observe that the method of proof used here still works in the periodic setting, and thus, we reobtain the well-posedness result \cite{Mo2}  in a simpler way. Moreover, as in the continuous case, we also prove new uniqueness results (see Theorem \ref{theo1per} below). In particular, we get unconditional well-posedness in $H^s(\T)$ as soon as $s \ge \frac12$.

Finally, we believe that this technique may be useful for another nonlinear dispersive equations presenting the same kind of difficulties as the Benjamin-Ono equation. For example, consider the higher-order Benjamin-Ono equation
\begin{equation} \label{hoBO}
\partial_tv-b\mathcal{H}\partial^2_xv+a \partial_x^3v=cv\partial_xv-d
           \partial_x(v\mathcal{H}\partial_xv+\mathcal{H}(v\partial_xv)),
\end{equation}
where $x$, $t \in \mathbb R$, $v$ is a real-valued function,
$a \in \mathbb R$, $b, \ c$ and $d$ are positive constants.
The equation above corresponds to a second order approximation model of the same
phenomena described by the Benjamin-Ono equation.
It was derived by Craig, Guyenne and Kalisch \cite{CGK} using a Hamiltonian perturbation theory, and possesses an energy at the $H^1$-level. As for the Benjamin-Ono equation, the flow map associated to \eqref{hoBO} fails to be smooth in any Sobolev space $H^s(\mathbb R)$, $s \in \mathbb R$ \cite{Pi}.
Recently, the Cauchy problem associated to \eqref{hoBO} was proved to be well-posed in $H^2(\mathbb R)$ \cite{LPP}. In a forthcoming paper, the authors will show that it is actually well-posed in the energy space $H^1(\mathbb R)$.

This paper is organized as follows: in the next section, we introduce the notations, define the function spaces and recall some classical linear estimates. Section 3 is devoted to the key nonlinear estimates, which are used in Section 4 to prove the main part of Theorem \ref{theo1}, while the assertions  $i)$ and $ii)$
 are proved  in Section 5. In Section 6, we give a simple  proof of the continuity of the flow-map for the weak
  $ L^2(\R) $-topology whereas Section 7 is devoted to some comments and new results in the periodic case. Finally,
 in the appendix we prove the bilinear estimate used in Section 5.

\section{Notation, function spaces and preliminary estimates}

\subsection{Notation}
For any positive numbers $a$ and $b$, the notation $a \lesssim b$ means that there exists a positive constant
$c$ such that $a \le c b$. We also denote $a \sim b$ when $a \lesssim b$ and $b \lesssim a$. Moreover, if $\alpha \in \mathbb R$, $\alpha_+$, respectively $\alpha_-$, will denote a number slightly greater, respectively lesser, than $\alpha$.

For $u=u(x,t) \in \mathcal{S}(\mathbb R^2)$,
$\mathcal{F}u=\widehat{u}$ will denote its space-time Fourier
transform, whereas $\mathcal{F}_xu=(u)^{\wedge_x}$, respectively
$\mathcal{F}_tu=(u)^{\wedge_t}$, will denote its Fourier transform
in space, respectively in time. For $s \in \mathbb R$, we define the
Bessel and Riesz potentials of order $-s$, $J^s_x$ and $D_x^s$, by
\begin{displaymath}
J^s_xu=\mathcal{F}^{-1}_x\big((1+|\xi|^2)^{\frac{s}{2}}
\mathcal{F}_xu\big) \quad \text{and} \quad
D^s_xu=\mathcal{F}^{-1}_x\big(|\xi|^s \mathcal{F}_xu\big).
\end{displaymath}

Throughout the paper, we fix a cutoff function $\eta$ such that
\begin{displaymath}
\eta \in C_0^{\infty}(\mathbb R), \quad 0 \le \eta \le 1, \quad
\eta_{|_{[-1,1]}}=1 \quad \mbox{and} \quad  \mbox{supp}(\eta)
\subset [-2,2].
\end{displaymath}
We define
\begin{displaymath}
\phi(\xi):=\eta(\xi)-\eta(2\xi) \quad \mbox{and} \quad
\phi_{2^l}(\xi):=\phi(2^{-l}\xi).
\end{displaymath}
Any summations over capitalized variables such as $N$ are presumed to be dyadic with $N \ge 1$, \textit{i.e.},
these variables range over numbers of the form $2^n$, $n \in \mathbb Z_{+}$. Then, we have that
\begin{displaymath}
\sum_{N}\phi_N(\xi)=1-\eta(2\xi), \ \forall \xi \neq 0 \quad \mbox{and}
\quad \mbox{supp} \, (\phi_N) \subset \{\frac{N}{2}\le |\xi| \le 2N\}.
\end{displaymath}
Let us define the Littlewood-Paley multipliers by
\begin{displaymath}
P_Nu=\mathcal{F}^{-1}_x\big(\phi_N\mathcal{F}_xu\big),
\quad \text{and} \quad P_{\ge N}:=\sum_{K \ge N} P_{K}.
\end{displaymath}
Moreover, we also define the operators $P_{hi}$, $P_{HI}$, $P_{lo}$
and $P_{LO}$ by
\begin{displaymath}
P_{hi}=\sum_{N\ge 2 } P_N, \quad  P_{HI}=\sum_{N \ge 8}P_N, \quad
P_{lo}= 1-P_{hi}, \quad \text{and} \quad P_{LO}= 1-P_{HI}.
\end{displaymath}

Let $P_+$ and $P_-$ the projection on respectively the positive and
the negative Fourier frequencies. Then
\begin{displaymath}
P_{\pm}u=\mathcal{F}^{-1}_x\big(\chi_{\mathbb
R_{\pm}}\mathcal{F}_xu\big),
\end{displaymath}
and we also denote $P_{\pm hi}=P_{\pm}P_{hi}$, $P_{\pm
HI}=P_{\pm}P_{HI}$, $P_{\pm lo}=P_{\pm}P_{lo}$ and $P_{\pm
LO}=P_{\pm}P_{LO}$. Observe that $P_{hi}$, $P_{HI}$, $P_{lo}$ and
$P_{LO}$ are bounded operators on $L^p(\mathbb R)$ for $1\le
p\le\infty$, while $P_{\pm}$ are only bounded on $L^p(\mathbb R)$
for $1 < p < \infty$. We also note that
\begin{displaymath}
\mathcal{H}=-iP_++iP_-.
\end{displaymath}

Finally, we denote by $U(\cdot)$ the free group associated with the linearized Benjamin-Ono equation, which is to say,
\begin{displaymath}
\mathcal{F}_x\big(U(t)f \big)(\xi)=e^{-it|\xi|\xi}\mathcal{F}_xf(\xi).
\end{displaymath}

\subsection{Function spaces} \label{FS}
For $1 \le p \le \infty$, $L^p(\mathbb R)$ is the usual Lebesgue space with the norm $\|\cdot\|_{L^p}$, and
for $s \in \mathbb R$ , the real-valued Sobolev spaces $H^s(\mathbb R)$ and $W^{s,p}(\mathbb R)$ denote the spaces of all real-valued functions with the usual norms
\begin{displaymath}
\|f\|_{H^s}=\|J^su\|_{L^2}
\quad \text{and} \quad
\|f\|_{W^{s,p}}=\|J^s_x f\|_{L^p}.
\end{displaymath}
For $1 < p< \infty$,  we define the
space $\tilde{L}^p$
\begin{displaymath}
\|f\|_{\tilde{L}^p}=\|P_{lo}f\|_{L^p}+\Big(\sum_{N}\|P_{N}f\|_{L^p}^2\Big)^{\frac{1}{2}}.
\end{displaymath}
Observe that when $p \ge 2$, the Littlewood-Paley theorem on the square function and Minkowski's inequality imply that the injection $\tilde{L}^p \hookrightarrow L^p$ is continuous.
Moreover, if $u=u(x,t)$ is a real-valued function defined for $x \in \mathbb R$ and $t$ in the time interval $[0,T]$, with $T>0$, if $B$ is one of the spaces defined above and $1 \le p \le \infty$, we will define the mixed space-time spaces $L^p_TB_x$, respectively $L^p_tB_x$, by the norms
\begin{displaymath}
\|u\|_{L^p_TB_x} =\Big( \int_0^T\|u(\cdot,t)\|_{B}^pdt\Big)^{\frac1p}
\quad \text{respectively} \quad \|u\|_{L^p_tB_x} =\Big( \int_{\mathbb R}\|u(\cdot,t)\|_{B}^pdt\Big)^{\frac1p}.
\end{displaymath}

For $s$, $b \in \mathbb R$, we introduce the Bourgain spaces $X^{s,b}$ and $Z^{s,b}$ related to the Benjamin-Ono equation as the completion of the Schwartz space $\mathcal{S}(\mathbb R^2)$ under the norms
\begin{equation} \label{Bourgain}
\|u\|_{X^{s,b}} := \left(
\int_{\mathbb{R}^2}\langle\tau+|\xi|\xi\rangle^{2b}\langle
\xi\rangle^{2s}|\widehat{u}(\xi, \tau)|^2 d\xi d\tau \right)^{1/2},
\end{equation}
\begin{equation}\label{Bourgain2}
\|u\|_{Z^{s,b}} := \left(
\int_{\mathbb{R}}\Big(\int_{\mathbb R}\langle\tau+|\xi|\xi\rangle^{b}\langle
\xi\rangle^{s}|\widehat{u}(\xi, \tau)|d\tau \Big)^2 d\xi  \right)^{1/2},
\end{equation}
\begin{equation} \label{Bourgain4}
\|u\|_{\widetilde{Z}^{s,b}}=\|P_{lo}u\|_{Z^{s,b}}+\left(\sum_{N}\|P_Nu\|_{Z^{s,b}}^2 \right)^{\frac12},
\end{equation}
and
\begin{equation} \label{Bourgain3}
\|u\|_{Y^{s}}=\|u\|_{X^{s,\frac12}}+\|u\|_{\widetilde{Z}^{s,0}},
\end{equation}
where $\langle x\rangle:=1+|x|$. We will also use the localized (in time) version of these spaces. Let $T>0$ be a positive time and $\|\cdot\|_{B}=\|\cdot\|_{X^{s,b}}$, $\|\cdot\|_{\widetilde{Z}^{s,b}}$ or $\|\cdot\|_{Y^{s}}$. If $u: \mathbb R \times [0,T]\rightarrow \mathbb C$, then
\begin{displaymath}
\|u\|_{B_T}:=\inf \{\|\tilde{u}\|_{B} \ | \ \tilde{u}:
\mathbb R \times \mathbb R \rightarrow \mathbb C, \tilde{u}|_{\mathbb R \times [0,T]} = u\}.
\end{displaymath}
it is worth recalling that
\begin{displaymath}
Y^s_T \hookrightarrow Z^{s,0}_T \hookrightarrow C([0,T];H^s(\mathbb R)).
\end{displaymath}

\subsection{Linear estimates}
In this subsection, we recall some linear estimates in Bourgain's
spaces which will be needed later. The first ones are well-known
(cf. \cite{GTV} for example).
\begin{lemma}[Homogeneous linear estimate] \label{prop1.1}
Let $s \in \mathbb R$. Then
\begin{equation} \label{prop1.1.2}
\|\eta(t)U(t)f\|_{Y^s} \lesssim\|f\|_{H^s}.
\end{equation}
\end{lemma}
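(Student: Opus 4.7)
The plan is to expand the $Y^s$ norm into its two constituent pieces $\|\cdot\|_{X^{s,1/2}}$ and $\|\cdot\|_{\widetilde{Z}^{s,0}}$ and treat each separately. The starting observation in both cases is the explicit formula for the space-time Fourier transform: since $U(t)f$ has Fourier symbol $e^{-it|\xi|\xi}$, one computes
\begin{equation*}
\mathcal{F}\bigl(\eta(t)U(t)f\bigr)(\xi,\tau)=\widehat{\eta}\bigl(\tau+|\xi|\xi\bigr)\,\widehat{f}(\xi).
\end{equation*}
Because $\eta\in C_0^\infty(\mathbb{R})$, $\widehat{\eta}$ is a Schwartz function, so any polynomial weight in $\tau+|\xi|\xi$ remains integrable against $|\widehat{\eta}|$ or $|\widehat{\eta}|^{2}$.

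For the $X^{s,1/2}$ piece, I would insert this formula into \eqref{Bourgain} and change variable $\mu=\tau+|\xi|\xi$ in the $\tau$-integral, which decouples the $\xi$- and $\mu$-integrations:
\begin{equation*}
\|\eta(t)U(t)f\|_{X^{s,1/2}}^{2}=\Bigl(\int_{\mathbb R}\langle\mu\rangle|\widehat{\eta}(\mu)|^{2}\,d\mu\Bigr)\int_{\mathbb R}\langle\xi\rangle^{2s}|\widehat{f}(\xi)|^{2}\,d\xi\lesssim\|f\|_{H^{s}}^{2}.
\end{equation*}

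For the $\widetilde{Z}^{s,0}$ piece, the key is that the inner $\tau$-integral in \eqref{Bourgain2} collapses: from the formula above,
\begin{equation*}
\int_{\mathbb R}|\widehat{\eta}(\tau+|\xi|\xi)|\,d\tau=\|\widehat{\eta}\|_{L^{1}},
\end{equation*}
so that for any Littlewood--Paley piece $P_{N}$ one has
\begin{equation*}
\|P_{N}\eta(t)U(t)f\|_{Z^{s,0}}^{2}\lesssim\int_{\mathbb R}\phi_{N}(\xi)^{2}\langle\xi\rangle^{2s}|\widehat{f}(\xi)|^{2}\,d\xi,
\end{equation*}
and analogously for $P_{lo}$. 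Summing over $N$ and using $\sum_{N}\phi_{N}(\xi)^{2}\lesssim 1$ (the supports of the $\phi_{N}$ are almost disjoint) yields $\|\eta(t)U(t)f\|_{\widetilde{Z}^{s,0}}\lesssim\|f\|_{H^s}$. Adding the two bounds gives the lemma.

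There is really no obstacle here: the proof is a direct calculation exploiting that $\eta$ is a fixed Schwartz cutoff, and the two $Y^{s}$ components respond to the same mechanism (integrability of $\widehat{\eta}$ or $\langle\mu\rangle|\widehat{\eta}(\mu)|^{2}$ in the dual variable $\mu=\tau+|\xi|\xi$). The only point that deserves a moment of care is the almost-orthogonality of the $P_{N}$ pieces inside the $\widetilde{Z}^{s,0}$ norm, which follows from the support properties of $\phi_N$ stated in the notation subsection.
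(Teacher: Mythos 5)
Your proof is correct and is essentially the same argument the paper invokes: the paper simply cites the classical linear estimates for $X^{s,b}$ and $Z^{s,b}$ together with the dyadic decomposition of the norm, and your computation (the explicit formula $\mathcal{F}(\eta(t)U(t)f)(\xi,\tau)=\widehat{\eta}(\tau+|\xi|\xi)\widehat{f}(\xi)$, the change of variable $\mu=\tau+|\xi|\xi$, and the almost-orthogonality of the $P_N$ pieces) is precisely the content of those classical estimates spelled out in full.
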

\begin{lemma}[Non-homogeneous linear estimate] \label{prop1.2}
Let $s \in \mathbb R$. Then for any $0<\delta<1/2$,
\begin{equation} \label{prop1.2.1}
\big\|\eta(t)\int_0^tU(t-t')g(t')dt'\big\|_{X^{s,\frac12+\delta}} \lesssim  \|g\|_{X^{s,-\frac12+\delta}}
\end{equation}
and
\begin{equation} \label{prop1.2.2}
\big\|\eta(t)\int_0^tU(t-t')g(t')dt'\big\|_{Y^s} \lesssim
\|g\|_{X^{s,-\frac12}}
+\|g\|_{\widetilde{Z}^{s,-1}}.
\end{equation}
\end{lemma}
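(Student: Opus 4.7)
The plan is to treat both estimates uniformly by first conjugating with the free group $U(\cdot)$ and then expanding the Duhamel integral via time-Fourier inversion. Writing $w(t):=\eta(t)\int_0^t U(t-t')g(t')\,dt'$ and $v(t):=U(-t)w(t)$, the identity $\hat w(\xi,\tau)=\mathcal{F}_{x,t}v(\xi,\tau+|\xi|\xi)$ together with the substitution $\tau'=\tau+|\xi|\xi$ gives the norm identifications
\begin{equation*}
\|w\|_{X^{s,b}}=\|v\|_{H^s_xH^b_t}\quad\text{and}\quad\|P_Nw\|_{Z^{s,b}}=\bigl\|\langle\xi\rangle^s\langle\tau'\rangle^b\mathcal{F}_{x,t}(P_Nv)\bigr\|_{L^2_\xi L^1_{\tau'}},
\end{equation*}
which reduces both inequalities to natural tensor-product estimates on $v$. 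Setting $\tilde G(t'):=U(-t')g(t')$ (so that $\|\tilde G\|_{H^s_xH^b_t}=\|g\|_{X^{s,b}}$) and applying time-Fourier inversion,
\begin{equation*}
v(x,t)=\eta(t)\int_\R\mathcal{F}_t\tilde G(x,\tau')\,\frac{e^{it\tau'}-1}{i\tau'}\,d\tau',
\end{equation*}
I would split the $\tau'$-integral using a smooth cutoff $\chi$ with $\chi\equiv 1$ on $|\tau'|\le1$ and support in $|\tau'|\le2$.

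On the low-modulation region I would Taylor-expand $\frac{e^{it\tau'}-1}{i\tau'}=\sum_{k\ge 1}\frac{i^{k-1}t^k(\tau')^{k-1}}{k!}$, producing an absolutely convergent sum of tensor products $\frac{t^k\eta(t)}{k!}a_k(x)$ with $a_k(x):=\int\chi(\tau')(\tau')^{k-1}\mathcal{F}_t\tilde G(x,\tau')\,d\tau'$; Cauchy-Schwarz over the compact support of $\chi$ bounds $\|a_k\|_{H^s_x}\lesssim\|g\|_{X^{s,-1/2+\delta}}$ uniformly for any $\delta\ge 0$, and the factorial decay of $\|t^k\eta\|_{H^{1/2+\delta}_t}/k!$ ensures convergence. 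On the high-modulation region I would decompose $\frac{e^{it\tau'}-1}{i\tau'}=\frac{e^{it\tau'}}{i\tau'}-\frac{1}{i\tau'}$. The oscillatory piece, after multiplication by $\eta(t)$, has time-Fourier transform proportional to $\hat\eta\ast h$, where $h(x,\tau'):=(1-\chi)(\tau')(i\tau')^{-1}\mathcal{F}_t\tilde G(x,\tau')$; the pointwise bound $\langle\tau'\rangle^{1/2+\delta}|\tau'|^{-1}\lesssim\langle\tau'\rangle^{-1/2+\delta}$ on $|\tau'|>1$, combined with boundedness of multiplication by the Schwartz function $\eta$ on $H^b_t$, yields its Sobolev norm by $\|g\|_{X^{s,-1/2+\delta}}$, while Young's inequality $\|\hat\eta\ast h\|_{L^1_{\tau'}}\le\|\hat\eta\|_{L^1}\|h\|_{L^1_{\tau'}}$ together with $\|h(\xi,\cdot)\|_{L^1_{\tau'}}\lesssim\int\langle\tau'\rangle^{-1}|\mathcal{F}_{x,t}\tilde G(\xi,\tau')|\,d\tau'$ handles the $L^2_\xi L^1_{\tau'}$ piece of $\widetilde Z^{s,0}$ by $\|g\|_{\widetilde Z^{s,-1}}$.

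The main obstacle will be the remaining tensor product $\eta(t)c(x)$ with $c(x):=\int(1-\chi)(\tau')(i\tau')^{-1}\mathcal{F}_t\tilde G(x,\tau')\,d\tau'$. A direct Cauchy-Schwarz in $\tau'$ yields
\begin{equation*}
|c(x)|\lesssim\Bigl(\int_{|\tau'|>1}|\tau'|^{-2}\langle\tau'\rangle^{1-2\delta}\,d\tau'\Bigr)^{1/2}\|\tilde G(x,\cdot)\|_{H^{-1/2+\delta}_t},
\end{equation*}
and the $\tau'$-integral converges precisely when $\delta>0$, immediately yielding \eqref{prop1.2.1}. At $\delta=0$ the integral diverges logarithmically; this is exactly why \eqref{prop1.2.2} must include the additional $\widetilde Z^{s,-1}$ norm on the right-hand side. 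One instead uses the direct estimate $|\mathcal{F}_xc(\xi)|\le\int\langle\tau'\rangle^{-1}|\mathcal{F}_{x,t}\tilde G(\xi,\tau')|\,d\tau'$, which, after Littlewood-Paley decomposition in $\xi$ and the change of variables $\tau=\tau'-|\xi|\xi$, is controlled blockwise by $\|P_Ng\|_{Z^{s,-1}}$, giving $\|c\|_{H^s_x}\lesssim\|g\|_{\widetilde Z^{s,-1}}$; since $\|\eta c\|_{H^s_xH^{1/2}_t}\lesssim\|c\|_{H^s_x}$ and the analogous $L^2_\xi L^1_{\tau'}$ bound holds tensorially via $\|\hat\eta\|_{L^1}$, both components of the $Y^s$ norm are controlled. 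Summing the low-modulation, oscillatory, and tensor-product contributions completes the proof of \eqref{prop1.2.1} and \eqref{prop1.2.2}.
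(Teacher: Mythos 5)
Your proposal is correct and is, in substance, exactly the classical Ginibre--Tsutsumi--Velo argument (conjugation by the free group, Fourier inversion of the Duhamel integral, Taylor expansion at low modulation, oscillatory/non-oscillatory splitting at high modulation, with the tensor-product term $\eta(t)c(x)$ forcing the $\widetilde{Z}^{s,-1}$ norm at $\delta=0$) that the paper simply invokes by citation to \cite{GTV}, combined with the remark that all operators commute with the Littlewood--Paley projections so the $\ell^2$-dyadic structure of $X^{s,b}$ and $\widetilde{Z}^{s,b}$ passes through blockwise. You have written out in full what the paper treats as known, but the route is the same.
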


\begin{proof}[Proof of Lemmas \ref{prop1.1} and \ref{prop1.2}] The proof of
Lemmas \ref{prop1.1} and \ref{prop1.2} is a direct consequence of the classical
linear estimates for $X^{s,b}$ and $Z^{s,b}$ and the fact that
\begin{displaymath}
\|u\|_{X^{s,b}}=\|P_{lo}u\|_{X^{s,b}}+\big(\sum_{N} \|P_N
u\|_{X^{s,b}}^2\big)^{1/2}.
\end{displaymath}
\end{proof}

\begin{lemma} \label{prop1.3b}
For any $T>0$, $s \in \mathbb R$ and for all $-\frac12< b' \le b
<\frac12$, it holds
\begin{equation} \label{prop1.3b.1}
\|u\|_{X^{s,b'}_T} \lesssim T^{b-b'}\|u\|_{X^{s,b}_T}.
\end{equation}
\end{lemma}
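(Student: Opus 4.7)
The plan is a standard time-localization argument: take an extension of $u$ with near-optimal norm, multiply by a smooth cutoff in time that equals $1$ on $[0,T]$, and then show that this new extension has the desired $X^{s,b'}$ bound.

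Given $\varepsilon>0$, pick $\tilde u:\R^2\to\mathbb C$ extending $u$ with $\|\tilde u\|_{X^{s,b}}\le \|u\|_{X^{s,b}_T}+\varepsilon$. Let $\eta_T(t):=\eta(t/T)$ where $\eta$ is the cutoff fixed in Section 2; since $\eta_T\equiv 1$ on $[-T,T]\supset [0,T]$, the product $\eta_T\tilde u$ is again an extension of $u$. It therefore suffices to prove
\[
\|\eta_T\tilde u\|_{X^{s,b'}}\lesssim T^{b-b'}\|\tilde u\|_{X^{s,b}}.
\]
The weight $\langle\xi\rangle^s$ is a spatial Fourier multiplier that commutes with multiplication by $\eta_T(t)$, so one may reduce to $s=0$. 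Next, the map $v(x,t)\mapsto U(-t)v(x,t)$ is an isometry from $X^{0,b}$ onto $H^b_tL^2_x$ (by Plancherel and the change of variables $\tau\mapsto \tau+|\xi|\xi$), and it commutes with multiplication by $\eta_T$ since $U(-t)$ acts as a Fourier multiplier in $x$ at each fixed $t$ while $\eta_T$ depends only on $t$. Setting $w=U(-t)\tilde u$, the estimate becomes
\[
\|\eta_T w\|_{H^{b'}_t L^2_x}\lesssim T^{b-b'}\|w\|_{H^b_t L^2_x},
\]
which by Plancherel in $x$ and Fubini reduces to the scalar statement
\[
\|\eta_T g\|_{H^{b'}(\R)}\lesssim T^{b-b'}\|g\|_{H^b(\R)},\qquad g\in H^b(\R).
\]

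The remaining scalar inequality is the classical time-cutoff estimate on Sobolev spaces in the range $-\tfrac12<b'\le b<\tfrac12$. One treats first the diagonal case $b'=b$ with $0\le b<\tfrac12$, where multiplication by $\eta_T$ is bounded on $H^b$ uniformly in $T\in(0,1]$, via the identity $\widehat{\eta_T g}=\widehat{\eta_T}\ast \hat g$, the scaling invariance $\|\widehat{\eta_T}\|_{L^1}=\|\hat\eta\|_{L^1}$, and a splitting of the convolution according to whether $|\tau-\sigma|\lesssim\langle\sigma\rangle$ or not; the diagonal case with $-\tfrac12<b\le 0$ then follows by duality. The off-diagonal gain $T^{b-b'}$ comes from interpolation with the trivial bound $\|\eta_T g\|_{L^2}\le \|g\|_{L^2}$ combined with the scaling $\eta_T(t)=\eta(t/T)$. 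The main obstacle is precisely this scalar estimate, and the strict restriction $|b|,|b'|<\tfrac12$ is sharp: the cutoff $\eta_T$ itself lies in $H^\sigma(\R)$ only for $\sigma<\tfrac12$, so multiplication by $\eta_T$ fails to preserve $H^b$ once $b\ge \tfrac12$.
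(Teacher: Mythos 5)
The paper offers no proof of this lemma at all; it is quoted as a classical fact about Bourgain spaces (of the type found in \cite{GTV}), so there is no argument of the authors' to compare yours against. Your reduction is the standard and correct one: near-optimal extension, multiplication by $\eta(\cdot/T)$, commuting $\langle\xi\rangle^s$ and the isometry $v\mapsto U(-t)v$ past the time cutoff, and Plancherel--Fubini down to the scalar estimate $\|\eta_T g\|_{H^{b'}(\R)}\lesssim T^{b-b'}\|g\|_{H^b(\R)}$ for $-\tfrac12<b'\le b<\tfrac12$; all of these steps are justified as you state them, and your remark on the sharpness of the restriction $|b|,|b'|<\tfrac12$ is correct. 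The one soft spot is the last sentence: interpolating the diagonal bound $\|\eta_T g\|_{H^{b}}\lesssim\|g\|_{H^{b}}$ against the literally ``trivial'' bound $\|\eta_T g\|_{L^2}\le\|g\|_{L^2}$ produces two $T$-uniform estimates and hence no power of $T$ whatsoever. The gain has to enter through a bound in which the cutoff's norm scales, e.g. by H\"older and Sobolev embedding,
\begin{equation*}
\|\eta_T g\|_{L^2}\le\|\eta_T\|_{L^{1/b}}\,\|g\|_{L^{2/(1-2b)}}\lesssim T^{b}\,\|g\|_{H^{b}},\qquad 0<b<\tfrac12,
\end{equation*}
together with its dual $\|\eta_T g\|_{H^{b'}}\lesssim T^{-b'}\|g\|_{L^2}$ for $-\tfrac12<b'<0$; interpolating \emph{these} against the diagonal case (and composing $\eta_T=\eta_T\tilde\eta_T$ when $b'\le 0\le b$) yields the full factor $T^{b-b'}$. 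You allude to ``the scaling $\eta_T(t)=\eta(t/T)$,'' which is indeed where the power of $T$ lives, but you attach it to the wrong estimate; with the displayed bound substituted, your proof is complete and is the standard one.
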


The following Bourgain-Strichartz estimates will also be useful.
\begin{lemma} \label{bourgstrich}
%\begin{equation} \label{bourgstrich.1}
%\|u\|_{L^3_{x,t}} \lesssim \|u\|_{X^{0,\frac14+}}.
%\end{equation}
It holds that
\begin{equation} \label{bourgstrich.2}
\|u\|_{L^4_{x,t}} \lesssim \|u\|_{\widetilde{L}^4_{x,t}} \lesssim
\|u\|_{X^{0,\frac38}}
\end{equation}
and for any $T>0$ and $\frac38 \le b \le \frac12$,
\begin{equation} \label{bourgstrich.3}
\|u\|_{L^4_{x,T}} \lesssim T^{b-\frac38}\|u\|_{X^{0,b}_T}.
\end{equation}
\end{lemma}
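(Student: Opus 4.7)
The plan is as follows. The first inequality $\|u\|_{L^4_{x,t}} \lesssim \|u\|_{\widetilde{L}^4_{x,t}}$ is routine: the Littlewood--Paley square function theorem in the $x$-variable dominates $\|u(\cdot,t)\|_{L^4_x}$ by $\|(\sum_N|P_Nu(\cdot,t)|^2)^{1/2}\|_{L^4_x}$, and Minkowski's inequality (valid since $4\ge 2$) brings the $\ell^2$ sum past the $L^4_{x,t}$ norm, producing $\|u\|_{\widetilde{L}^4_{x,t}}$.

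The essential inequality $\|u\|_{\widetilde{L}^4_{x,t}}\lesssim\|u\|_{X^{0,3/8}}$ I would reduce, after Littlewood--Paley decomposition, to the frequency-localized Bourgain--Strichartz bound $\|P_Nu\|_{L^4_{x,t}}\lesssim\|P_Nu\|_{X^{0,3/8}}$ together with the easier $P_{lo}u$ analogue. Squaring and summing these dyadic pieces yields the target, since $\|u\|_{X^{0,3/8}}^2$ is equivalent to the square-sum of the Littlewood--Paley pieces. For the model frequency-localized estimate, I would observe that on each half-line $\{\xi>0\}$ or $\{\xi<0\}$ the BO symbol $|\xi|\xi$ equals $\pm\xi^2$, so the BO evolution restricted to $P_{\pm N}u$ is unitarily equivalent to a Schr\"odinger (or time-reversed Schr\"odinger) evolution; one can thus transfer the classical Bourgain $L^4$-embedding for the $1$D Schr\"odinger equation. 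Its proof proceeds via Plancherel applied to $\|(P_Nu)^2\|_{L^2_{x,t}}$, expanding $\widehat{u^2}$ as a convolution in $(\xi,\tau)$, and invoking Cauchy--Schwarz to separate the resonance weights $\langle\tau_1+|\xi_1|\xi_1\rangle^{-3/4}$ from the $L^2$ density; the remaining supremum integral is finite after integrating first in $\tau_1$ (which yields a factor $\langle\,\cdot\,\rangle^{-1/2}$) and then changing variables in $\xi_1$ against the non-degenerate resonance function $\Phi(\xi_1)=|\xi_1|\xi_1-|\xi_1-\xi|(\xi_1-\xi)$.

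The third estimate follows immediately by combining the embedding at $b=3/8$ with the time-contraction Lemma \ref{prop1.3b}: for $3/8\le b\le 1/2$ one has $\|u\|_{L^4_{x,T}}\lesssim\|u\|_{X^{0,3/8}_T}\lesssim T^{b-3/8}\|u\|_{X^{0,b}_T}$, and the standard extension argument lets one pass between $X^{s,b}$ and $X^{s,b}_T$ freely. The only delicate point in the whole argument is the bookkeeping of the resonance integral in the model Strichartz bound: since $\Phi'\sim|\xi|$ while $\xi_1$ ranges over an interval of length $\sim N$, the case $|\xi|\ll N$ requires care, and the frequency cutoff $|\xi_1|\sim N$ is precisely what prevents the $\xi_1$-integral from diverging and so secures a uniform bound in $(\xi,\tau)$.
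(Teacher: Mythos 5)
Your proposal is correct and follows essentially the same route as the paper: the first inequality is the square-function theorem plus Minkowski (already recorded in Subsection 2.2 for $\tilde L^p$, $p\ge 2$), the second is obtained by applying the single-block bound $\|P_Nu\|_{L^4_{x,t}}\lesssim\|P_Nu\|_{X^{0,\frac38}}$ to each dyadic piece and square-summing, and \eqref{bourgstrich.3} is the extension argument combined with Lemma \ref{prop1.3b}. The only place you go beyond the paper is in sketching a proof of the block estimate, which the paper simply imports from the appendix of \cite{Mo}; your reduction to the one-dimensional Schr\"odinger $L^4$ embedding via $P_\pm$ (on whose Fourier supports $\tau+|\xi|\xi=\tau\pm\xi^2$, with $P_\pm$ bounded on $L^4$) is clean and legitimate. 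One caution on that sketch: the plain Cauchy--Schwarz computation you describe only closes for $b>\frac38$ (at $b=\frac38$ the $\xi_1$-integral of $\langle\tau-\Phi(\xi_1)\rangle^{-1/2}$ against $|\Phi'|^{-1}$ incurs a logarithmic divergence, and the cutoff $|\xi_1|\sim N$ only converts this into a $\log N$ loss rather than a uniform bound); reaching the endpoint $b=\frac38$ requires the standard additional dyadic decomposition in the modulation variable, as in Bourgain's original argument and in \cite{Mo}.
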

\begin{proof}
%Estimate \eqref{bourgstrich.1} is obtained by interpolating halfway between Strichartz's estimate
%$\|u\|_{L^6_{x,t}}\lesssim \|u\|_{X^{0,\frac12+}}$ and Plancherel's identity $\|u\|_{L^2_{x,t}}= \|u\|_{X^{0,0}}$.

Estimate \eqref{bourgstrich.2} follows directly by applying the estimate
\begin{displaymath}
\|u\|_{L^4_{x,t}} \lesssim \|u\|_{X^{0,\frac38}},
\end{displaymath}
proved in the appendix of \cite{Mo},
to each dyadic block on the left-hand side of \eqref{bourgstrich.2}.

To prove \eqref{bourgstrich.3}, we choose an extension $\tilde{u}
\in X^{0,b}$ of $u$ such that $\|\tilde{u}\|_{X^{0,b}} \le
2\|u\|_{X^{0,b}_T}$. Therefore, it follows from \eqref{prop1.3b.1}
and \eqref{bourgstrich.2} that
\begin{displaymath}
\|u\|_{L^4_{x,T}} \le \|\tilde{u}\|_{L^4_{x,t}} \lesssim \|\tilde{u}\|_{X^{0,\frac38}} \lesssim T^{b-\frac38}\|u\|_{X_T^{0,b}}.
\end{displaymath}
\end{proof}

\subsection{Fractional Leibniz's rules}
First we state the classical fractional Leibniz rule estimate
derived by Kenig, Ponce and Vega (See Theorems A.8 and A.12 in \cite{KPV2}).
\begin{proposition} \label{leibrule}
Let $0<\alpha<1$, $p, \ p_1, \ p_2 \in (1,+\infty)$ with $\frac1{p_1}+\frac1{p_2}=\frac1p$
and $\alpha_1, \ \alpha_2 \in [0,\alpha]$ with $\alpha=\alpha_1+\alpha_2$. Then,
\begin{equation} \label{leibrule1}
\big\|D^{\alpha}_x(fg)-fD^{\alpha}_xg-gD^{\alpha}_xf \big\|_{L^p}
\lesssim \|D_x^{\alpha_1}g\|_{L^{p_1}}\|D^{\alpha_2}_xf\|_{L^{p_2}}.
\end{equation}
Moreover, for $\alpha_1=0$, the value $p_1=+\infty$ is allowed.
\end{proposition}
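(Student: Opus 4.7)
The plan is to analyze the commutator $T(f,g) := D^\alpha_x(fg) - fD^\alpha_x g - g D^\alpha_x f$ via its bilinear Fourier symbol
\begin{equation*}
m(\xi_1,\xi_2) = |\xi_1+\xi_2|^\alpha - |\xi_1|^\alpha - |\xi_2|^\alpha,
\end{equation*}
and to combine a Littlewood-Paley decomposition with the Coifman-Meyer multiplier theorem. The key structural property of $m$ is its extra cancellation in the paraproduct regimes: when $|\xi_1| \ll |\xi_2|$, a Taylor expansion of $s \mapsto |s|^\alpha$ around $\xi_2$ yields $|m(\xi_1,\xi_2)| \lesssim |\xi_1|\,|\xi_2|^{\alpha-1}$ (the extra term $|\xi_1|^\alpha$ being absorbed since $0<\alpha<1$), and symmetrically for $|\xi_2|\ll|\xi_1|$. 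On the ``diagonal'' region $|\xi_1|\sim|\xi_2|$ only the trivial bound $|m| \lesssim |\xi_1|^\alpha + |\xi_2|^\alpha$ is available, but there $|\xi_1+\xi_2|$ is of the same size, so the redistribution of derivatives among the two factors is immediate.

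First, I would write $f = \sum_N P_N f$ and $g = \sum_M P_M g$ and split $T(f,g)$ into the three standard regimes $N \ll M$, $N \sim M$, $N \gg M$. In the regime $N \ll M$, one factors
\begin{equation*}
|\xi_1|\,|\xi_2|^{\alpha-1} = |\xi_1|^{\alpha_2}\,|\xi_2|^{\alpha_1} \cdot (|\xi_1|/|\xi_2|)^{1-\alpha_2},
\end{equation*}
which is legal since $\alpha_1+\alpha_2=\alpha<1$ forces $1-\alpha_2>0$. Treating the residual symbol, restricted to the frequency supports of the dyadic blocks, as a smooth bilinear multiplier of Coifman-Meyer type produces the block estimate
\begin{equation*}
\|P_{\sim M}T(P_N f, P_M g)\|_{L^p} \lesssim (N/M)^{1-\alpha_2}\, \|D^{\alpha_2}_x P_N f\|_{L^{p_2}}\, \|D^{\alpha_1}_x P_M g\|_{L^{p_1}}.
\end{equation*}
The symmetric case $N \gg M$ is handled identically. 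In the high-high regime $N \sim M$, the output is frequency-localized at $\lesssim N$, so one simply uses $|m| \lesssim N^\alpha \sim N^{\alpha_2} M^{\alpha_1}$ and Hölder's inequality in $x$ (with $1/p = 1/p_1+1/p_2$) to control each diagonal block.

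Summation in $N,M$ is then performed via the Littlewood-Paley square function theorem on $L^{p_1}$ and $L^{p_2}$ applied to $D^{\alpha_1}_x g$ and $D^{\alpha_2}_x f$ respectively: the paraproduct contributions converge as geometric series thanks to the gain $(N/M)^{1-\alpha_2}$, while the high-high contribution is summed by Cauchy-Schwarz in the diagonal pairs. The main obstacle, as in all sharp commutator estimates of this type, is the endpoint case $\alpha_1 = 0$ with $p_1 = +\infty$: there the square function theorem is not available, so one must avoid summing the dyadic blocks in $M$ for $g$ and instead invoke the full Coifman-Meyer bilinear multiplier theorem. Its hypotheses must be checked by a direct computation showing that, off the null set $\xi_1=-\xi_2$,
\begin{equation*}
|\partial_{\xi_1}^{a}\partial_{\xi_2}^{b}\, m(\xi_1,\xi_2)| \lesssim (|\xi_1|+|\xi_2|)^{\alpha-a-b}
\end{equation*}
for all $a,b \ge 0$, which is an explicit, if slightly tedious, calculation using the homogeneity of $|\cdot|^\alpha$. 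Once this endpoint is settled, estimate \eqref{leibrule1} follows by summing over the three regimes.
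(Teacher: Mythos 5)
The paper does not prove this proposition; it quotes it from Kenig--Ponce--Vega (Theorems A.8 and A.12 of \cite{KPV2}), so the only question is whether your argument stands on its own. It does not: both of the technical estimates it rests on are false as stated. First, the claimed cancellation of the symbol $m(\xi_1,\xi_2)=|\xi_1+\xi_2|^\alpha-|\xi_1|^\alpha-|\xi_2|^\alpha$ in the regime $|\xi_1|\ll|\xi_2|$ is wrong. The mean value theorem does give $\big||\xi_1+\xi_2|^\alpha-|\xi_2|^\alpha\big|\lesssim|\xi_1|\,|\xi_2|^{\alpha-1}$, but the leftover term $-|\xi_1|^\alpha$ is \emph{not} absorbed by this: since $\alpha-1<0$ and $|\xi_1|\ll|\xi_2|$ one has $|\xi_1|^{\alpha-1}\gg|\xi_2|^{\alpha-1}$, hence $|\xi_1|^\alpha=|\xi_1|\cdot|\xi_1|^{\alpha-1}\gg|\xi_1|\,|\xi_2|^{\alpha-1}$ (take $\alpha=\tfrac12$, $\xi_1=1$, $\xi_2=100$: $m\approx-0.95$ while $|\xi_1|\,|\xi_2|^{\alpha-1}=0.1$). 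The absorption you invoke is valid only for $\alpha>1$. In this regime one actually has $|m|\sim|\xi_1|^\alpha$, and the factorization $|\xi_1|^\alpha=|\xi_1|^{\alpha_2}|\xi_2|^{\alpha_1}(|\xi_1|/|\xi_2|)^{\alpha_1}$ shows the true dyadic gain is $(N/M)^{\alpha_1}$, not $(N/M)^{1-\alpha_2}$. This still sums when $\alpha_1>0$, but it degenerates exactly at the endpoint $\alpha_1=0$ which you propose to treat separately --- so that endpoint is not an isolated technicality but the point where the cancellation you rely on disappears entirely.

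Second, the separate treatment of the endpoint also fails as described, because $m$ does not satisfy the Coifman--Meyer derivative bounds you assert. Already $\partial_{\xi_1}m=\alpha\big(\mathrm{sgn}(\xi_1+\xi_2)|\xi_1+\xi_2|^{\alpha-1}-\mathrm{sgn}(\xi_1)|\xi_1|^{\alpha-1}\big)$ blows up like $|\xi_1|^{\alpha-1}$ as $\xi_1\to0$ with $|\xi_2|$ fixed and large, which is not $O\big((|\xi_1|+|\xi_2|)^{\alpha-1}\big)$; the singular set of $m$ is the union of the three lines $\xi_1=0$, $\xi_2=0$ and $\xi_1+\xi_2=0$, not just the last one. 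This is precisely why the standard proofs of the fractional Leibniz rule do not simply invoke Coifman--Meyer for this symbol: they split into paraproduct pieces, use the genuine cancellation only where it exists, and handle the $\alpha_1=0$, $p_1=\infty$ case by a different mechanism (in \cite{KPV2}, interpolation of an analytic family of operators between the two extreme distributions of the derivative). Your overall architecture --- Littlewood--Paley decomposition into the three interaction regimes --- is a reasonable starting point, but both key estimates would have to be replaced before the argument closes.
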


The next estimate is a frequency localized version of estimate \eqref{leibrule1}
in the same spirit as Lemma 3.2 in \cite{Ta}. It allows to share most of the fractional
derivative in the first term on the right-hand side of \eqref{lemma2.1}.
\begin{lemma} \label{lemma2}
Let $\alpha \ge 0$ and $1<q<\infty$. Then,
\begin{equation} \label{lemma2.1}
\big\|D_x^{\alpha}P_+\big(fP_-\partial_xg\big) \big\|_{L^q} \lesssim
\|D_x^{{\alpha}_1}f\|_{L^{q_1}}\|D_x^{{\alpha}_2}g\|_{L^{q_2}},
\end{equation}
with $1<q_i<\infty$, $\frac1{q_1}+\frac1{q_2}=\frac1q$ and $\alpha_1
\ge \alpha$, $\alpha_2 \ge 0$ and $\alpha_1+\alpha_2=1+\alpha$.
\end{lemma}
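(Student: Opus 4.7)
My plan is to reduce to a Littlewood--Paley decomposition of both factors and argue as in a paraproduct. Write $f=\sum_{N_1}P_{N_1}f$ and $P_-\partial_x g=\sum_{N_2}P_{N_2}(P_-\partial_x g)$. The key geometric point is that $P_{N_2}(P_-\partial_x g)$ has Fourier support in $[-2N_2,-N_2/2]$, so the product $P_{N_1}f\cdot P_{N_2}(P_-\partial_x g)$ survives the outer $P_+$ only when the positive part of the spectrum of $f$ dominates in absolute value, that is $N_1\gtrsim N_2$. This kills the low--high regime --- where the Leibniz-type bound \eqref{lemma2.1} would otherwise fail since there is no way to move fractional derivatives from the low-frequency to the high-frequency factor --- leaving only the high--low ($N_1\gg N_2$) and high--high ($N_1\sim N_2$) interactions to be estimated.

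For each surviving dyadic pair, the output $P_+(P_{N_1}f\cdot P_{N_2}(P_-\partial_x g))$ is Fourier-supported in $(0,CN_1)$, on which $D_x^\alpha$ acts as an $L^q$-bounded Fourier multiplier of norm $\lesssim N_1^\alpha$ (a fractional Bernstein inequality; the hypothesis $1<q<\infty$ enters through the boundedness of $P_+$). Combined with H\"older's inequality and the Bernstein equivalences
\[
\|P_{N_1}f\|_{L^{q_1}}\sim N_1^{-\alpha_1}\|D_x^{\alpha_1}P_{N_1}f\|_{L^{q_1}},\quad \|P_{N_2}(P_-\partial_x g)\|_{L^{q_2}}\lesssim N_2^{1-\alpha_2}\|D_x^{\alpha_2}P_{N_2}g\|_{L^{q_2}},
\]
together with the homogeneity $\alpha+1=\alpha_1+\alpha_2$, this yields the dyadic bound
\[
\|D_x^\alpha P_+(P_{N_1}f\cdot P_{N_2}(P_-\partial_x g))\|_{L^q}\lesssim\bigl(\tfrac{N_2}{N_1}\bigr)^{1-\alpha_2}\|D_x^{\alpha_1}P_{N_1}f\|_{L^{q_1}}\|D_x^{\alpha_2}P_{N_2}g\|_{L^{q_2}}.
\]

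For the summation, note that $\alpha_2\in[0,1]$. When $\alpha_2<1$, the geometric factor $(N_2/N_1)^{1-\alpha_2}$ is a Schur weight on $\{N_2\le N_1\}$, so Cauchy--Schwarz reduces the double sum to a product of two $\ell^2$ norms, which are then absorbed into $\|D_x^{\alpha_1}f\|_{L^{q_1}}$ and $\|D_x^{\alpha_2}g\|_{L^{q_2}}$ via the Littlewood--Paley square-function characterization of $L^q$ (valid for the full range $1<q<\infty$). The endpoint $\alpha_2=1$ (equivalently $\alpha_1=\alpha$) corresponds exactly to a standard high--low paraproduct $D_x^\alpha P_+T_{P_-\partial_x g}f$ plus a resonant contribution, and is handled by the classical Coifman--Meyer paraproduct $L^q$-bound.

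The main obstacle is the high--high block, where the different $N_1$-outputs are not dyadically separated in frequency and a raw triangle inequality in $\ell^1(N_1)$ fails (the sum $\sum_{N_1}\|D_x^{\alpha_1}P_{N_1}f\|_{L^{q_1}}$ need not converge in general). The resolution is precisely to pass to the $\ell^2(N_1)$ setting via Cauchy--Schwarz and the square function, which works for all $1<q_i<\infty$ without any restriction to $q_i\ge 2$.
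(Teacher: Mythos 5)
The paper does not actually prove this lemma; it cites Lemma~3.2 of \cite{Mo} (itself modelled on Lemma~3.2 of \cite{Ta}), so your argument must stand on its own. Its skeleton is sound: the observation that the outer $P_+$ forces $\xi_1\ge\max(\xi,|\xi_2|)$ and hence $N_1\gtrsim N_2$, the Bernstein bookkeeping, and the resulting dyadic bound with the weight $(N_2/N_1)^{1-\alpha_2}$ are all correct and are indeed the heart of the matter. The gap is in how you reassemble the blocks. After the triangle inequality in $L^q$ and Schur/Cauchy--Schwarz you are left with
$\bigl(\sum_{N_1}\|D_x^{\alpha_1}P_{N_1}f\|_{L^{q_1}}^2\bigr)^{1/2}\bigl(\sum_{N_2}\|D_x^{\alpha_2}P_{N_2}g\|_{L^{q_2}}^2\bigr)^{1/2}$,
and you claim each factor is controlled by the corresponding $L^{q_i}$ norm ``via the square-function characterization, valid for all $1<q_i<\infty$''. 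That step is the embedding $\ell^2_N(L^{p})\hookleftarrow L^p$ for Littlewood--Paley pieces, i.e.\ $\bigl(\sum_N\|P_Nh\|_{L^p}^2\bigr)^{1/2}\lesssim\|h\|_{L^p}$. By Minkowski's inequality this follows from the square-function theorem only when $p\le 2$ (the mixed-norm inequality $\|\cdot\|_{\ell^2(L^p)}\le\|\cdot\|_{L^p(\ell^2)}$ requires $p\le2$), and it is \emph{false} for $p>2$: with $\phi$ Schwartz and $\widehat{\phi}$ supported near the origin, $h=\sum_{j=1}^{K}2^{j/p}e^{i2^jx}\phi(2^jx)$ satisfies $\|P_{2^j}h\|_{L^p}\sim1$ for each $j$, so the left-hand side is $\sim K^{1/2}$, while $|h(x)|\lesssim\min(2^{K/p},|x|^{-1/p})$ gives $\|h\|_{L^p}\lesssim K^{1/p}\ll K^{1/2}$. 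Worse, since $\frac1{q_1}+\frac1{q_2}=\frac1q<1$, at least one of $q_1,q_2$ must exceed $2$, so there is \emph{no} admissible choice of exponents for which both absorptions are legitimate; and the instances actually used in this paper (\eqref{apriori u.6}, \eqref{zq}) take $q_1=q_2\in\{4,8\}$. Your closing remark that the argument needs ``no restriction to $q_i\ge2$'' thus points at the wrong endpoint: the trouble is at $q_i>2$, which is unavoidable.

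The repair is to keep the square functions \emph{inside} a single $L^q$ norm instead of taking $L^{q_i}$ norms of individual blocks. For the high--low part the outputs are frequency-localized at $\sim N_1$, so one uses the reverse Littlewood--Paley inequality $\|\sum_{N_1}h_{N_1}\|_{L^q}\lesssim\|(\sum_{N_1}|h_{N_1}|^2)^{1/2}\|_{L^q}$, a pointwise bound of each block in terms of $P_{N_1}(D_x^{\alpha_1}f)$ and a maximal function of $D_x^{\alpha_2}g$, and then H\"older together with the Fefferman--Stein vector-valued maximal inequality; for the high--high part one uses the pointwise Cauchy--Schwarz bound of the resonant sum by a product of square functions before taking the $L^q$ norm. (This is also what your deferral to the ``classical Coifman--Meyer paraproduct bound'' at $\alpha_2=1$ secretly relies on, so it should be the uniform mechanism throughout.) Alternatively, one can sidestep Littlewood--Paley summation altogether by exploiting the exact identity $|\xi_1|=|\xi|+|\xi_2|$ on the support $\{\xi>0,\ \xi_2<0\}$ to factor the symbol $|\xi_1|^{-\alpha_1}$ into one-variable multipliers in $\xi$ and $\xi_2$, which is the mechanism behind the proof in \cite{Ta} and \cite{Mo}.
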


\begin{proof} See Lemma 3.2 in \cite{Mo}.
\end{proof}

Finally, we derive an estimate to handle the multiplication by a
term on the form $e^{\pm\frac{i}{2}F}$, where $F$ is a real-valued function,
in fractional Sobolev spaces.
\begin{lemma} \label{lemma1}
Let $2 \le q <\infty$ and $0 \le \alpha \le \frac1q$. Consider $F_1$ and
$F_2$ two real-valued functions such that $u_j=\partial_xF_j$ belongs
to $L^2(\mathbb R)$ for $j=1, \,2$. Then, it holds that
\begin{equation} \label{lemma1.1}
\|J^{\alpha}_x\big(e^{\pm \frac{i}{2}F_1} g\big)\|_{L^q} \lesssim
(1+\|u_1\|_{L^2})\|J^{\alpha}_xg\|_{L^q},
\end{equation}
and
\begin{equation} \label{lemma1.1b}
\begin{split}
\|J^{\alpha}_x&\big(\big(e^{\pm \frac{i}{2}F_1}-e^{\pm \frac{i}{2}F_2}\big)
g\big)\|_{L^q} \\ &\lesssim \Big(\|u_1-u_2\|_{L^2}+\|e^{\pm
\frac{i}{2}F_1}-e^{\pm
\frac{i}{2}F_2}\|_{L^{\infty}}(1+\|u_1\|_{L^2})\Big)\|J^{\alpha}_xg\|_{L^q}.
\end{split}
\end{equation}
\end{lemma}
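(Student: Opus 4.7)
Both estimates hinge on the structural fact that $v := e^{\pm iF_1/2}$ satisfies $\|v\|_{L^\infty} = 1$ and $\partial_x v = \pm\tfrac{i}{2}u_1 v \in L^2$, with $\|\partial_x v\|_{L^2} \le \tfrac12\|u_1\|_{L^2}$. The case $\alpha = 0$ is immediate since $|v|\equiv 1$, so I focus on $0 < \alpha \le 1/q \le 1/2$; in particular $\alpha < 1$, so the fractional Leibniz rule of Proposition~\ref{leibrule} is applicable.

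The plan for \eqref{lemma1.1} is a Bony paraproduct decomposition
$$vg = T_v g + T_g v + R(v,g).$$
In $J^\alpha_x T_v g$ the derivative lands on $g$, and the Littlewood--Paley square function gives the bound $\|v\|_{L^\infty}\|J^\alpha_x g\|_{L^q} = \|J^\alpha_x g\|_{L^q}$. In $J^\alpha_x T_g v$ and $J^\alpha_x R(v,g)$ the derivative lands on $v$; here I exploit the dyadic bound
$$\|\Delta_k v\|_{L^2} \lesssim 2^{-k}\|\partial_x v\|_{L^2} \lesssim 2^{-k}\|u_1\|_{L^2}\qquad (k\ge 1),$$
combined with Bernstein to produce a square function estimate $\|(\sum_k 2^{2k\alpha}|\Delta_k v|^2)^{1/2}\|_{L^p} \lesssim \|u_1\|_{L^2}$ (valid for every $p \ge 2$, since $\alpha < \tfrac12 + \tfrac1p$). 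Pairing this with $S_{k-2} g$ or $\Delta_k g$ via H\"older (with $1/q = 1/p + 1/q_1$) and invoking the Sobolev embedding $\|g\|_{L^{q_1}} \lesssim \|J^\alpha_x g\|_{L^q}$ produces the required factor $\|u_1\|_{L^2}\|J^\alpha_x g\|_{L^q}$. The few low-frequency pieces $\Delta_k v$ with $k$ small are bounded in $L^\infty$ by $\|v\|_{L^\infty} = 1$ and are absorbed in the $\|J^\alpha_x g\|_{L^q}$ term since the corresponding sum involves only finitely many terms.

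For \eqref{lemma1.1b}, set $\phi := e^{\pm iF_1/2} - e^{\pm iF_2/2}$. The decisive algebraic identity is
$$\partial_x \phi = \pm\tfrac{i}{2}\bigl((u_1-u_2)\,e^{\pm iF_2/2} + u_1\phi\bigr),$$
which yields $\|\partial_x\phi\|_{L^2} \le \tfrac12\|u_1 - u_2\|_{L^2} + \tfrac12\|u_1\|_{L^2}\|\phi\|_{L^\infty}$. Re-running the paraproduct argument with $\phi$ in place of $v$ --- with $\|\phi\|_{L^\infty}$ now replacing $1$ in the $T_\phi g$ step, and the Bernstein estimate on $\Delta_k \phi$ refined by the above bound on $\|\partial_x\phi\|_{L^2}$ --- produces precisely the right-hand side of \eqref{lemma1.1b}.

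The main technical obstacle is the endpoint case $\alpha = 1/q$, where the critical Sobolev embedding $H^{\alpha,q} \hookrightarrow L^{\infty}$ fails. I sidestep it by choosing $p$ strictly greater than $1/\alpha$, so that $q_1$ defined by $1/q_1 = 1/q - 1/p$ remains finite and the embedding $H^{1/p,q}\hookrightarrow L^{q_1}$ stays sub-critical; the loss is absorbed in the trivial embedding $J^{1/p}\le J^\alpha$. Throughout, the constraint $\alpha\le 1/2$ guarantees that every dyadic sum on $v$ is geometric, so no logarithmic losses appear.
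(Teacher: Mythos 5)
Your proof is correct and follows essentially the same route as the paper's: both arguments rest on the facts that $|e^{\pm \frac{i}{2}F_1}|\equiv 1$ and $\partial_xe^{\pm \frac{i}{2}F_1}=\pm\frac{i}{2}u_1e^{\pm \frac{i}{2}F_1}\in L^2$, split the exponential into low and high frequencies (the low part measured in $L^\infty$, the high part in $L^2$ through its derivative, with the companion factor of $g$ placed in $L^{q_1}$ by Sobolev embedding), and both use for the difference bound the identity $\partial_x\big(e^{\pm \frac{i}{2}F_1}-e^{\pm \frac{i}{2}F_2}\big)=\pm\frac{i}{2}\big((u_1-u_2)e^{\pm \frac{i}{2}F_2}+u_1(e^{\pm \frac{i}{2}F_1}-e^{\pm \frac{i}{2}F_2})\big)$, which is exactly the paper's \eqref{lemma1.6}. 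The only cosmetic difference is that you re-derive the fractional Leibniz step by an explicit Bony paraproduct and square-function computation, whereas the paper invokes the Kenig--Ponce--Vega commutator estimate of Proposition \ref{leibrule} as a black box.
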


\begin{proof} In the case $\alpha=0$, we deduce from H\"older's inequality that
\begin{equation} \label{lemma1.2}
\|e^{\pm \frac{i}{2}F_1} g\|_{L^q} \le
\|g\|_{L^q},
\end{equation}
since $F_1$ is real-valued. Therefore we can assume that $0<\alpha\le
\frac1q$ and it is enough to bound $\|D^{\alpha}_x\big(e^{\pm
\frac{i}{2}F_1} g\big)\|_{L^q}$. First, we observe that
\begin{equation} \label{lemma1.3}
\|D^{\alpha}_x\big(e^{\pm \frac{i}{2}F_1} g\big)\|_{L^q} \le
\|D^{\alpha}_x\big(P_{lo}e^{\pm \frac{i}{2}F_1} g\big)\|_{L^q}
+\|D^{\alpha}_x\big(P_{hi}e^{\pm \frac{i}{2}F_1} g\big)\|_{L^q}
\end{equation}
Estimate \eqref{leibrule1} and Bernstein's inequality imply that
\begin{equation} \label{lemma1.4}
\begin{split}
\|D^{\alpha}_x\big(P_{lo}&e^{\pm \frac{i}{2}F_1} g\big)\|_{L^q}
\\&\lesssim \|P_{lo}e^{\pm
\frac{i}{2}F_1}\|_{L^{\infty}}\|D_x^{\alpha}g\|_{L^q}
+ \|D^{\alpha}_xP_{lo}e^{\pm
\frac{i}{2}F_1}\|_{L^{\infty}}\|g\|_{L^q}
\lesssim \|J_x^{\alpha}g\|_{L^q}.
\end{split}
\end{equation}
On the other hand, by using again estimate \eqref{leibrule1}, we get that
\begin{displaymath}
\|D^{\alpha}_x\big(P_{hi}e^{\pm \frac{i}{2}F_1} g\big)\|_{L^q} \lesssim \|P_{hi}e^{\pm
\frac{i}{2}F_1}\|_{L^{\infty}}\|D^{\alpha}_xg\|_{L^q}+\|g\|_{L^{q_1}}
\|D^{\alpha}_xP_{hi}e^{\pm
\frac{i}{2}F_1}\|_{L^{q_2}},
\end{displaymath}
with $\frac{1}{q_1}=\frac1q-\alpha$, $\frac1{q_2}=\alpha$, so that
$\frac{1}{q_1}+\frac{1}{q_2}=\frac1q$. Then, it follows from the
facts that $F_1$ is real-valued, $\partial_xF_1=u_1$ and the Sobolev
embedding that
\begin{equation} \label{lemma1.5}
\begin{split}
\|D^{\alpha}_x\big(P_{hi}e^{\pm \frac{i}{2}F_1} g\big)\|_{L^q} &\lesssim
\|D^{\alpha}_xg\|_{L^q}+\|J_x^{\alpha}g\|_{L^{q}}\|D^{\alpha+\frac12}_xP_{hi}e^{\pm
\frac{i}{2}F_1}\|_{L^{2}} \\
& \lesssim \|J^{\alpha}_xg\|_{L^q}\big(1+\|\partial_xe^{\pm
\frac{i}{2}F_1}\|_{L^2} \big) \\ &
\lesssim
\|J^{\alpha}_xg\|_{L^q}\big(1+\|u_1\|_{L^2} \big).
\end{split}
\end{equation}
The proof of estimate \eqref{lemma1.1} is concluded gathering \eqref{lemma1.2}--\eqref{lemma1.5}.

Estimate \eqref{lemma1.1b} can be obtained exactly in the same way, using that
\begin{equation} \label{lemma1.6}
\|\partial_x\big(e^{\pm \frac{i}{2}F_1}-e^{\pm \frac{i}{2}F_2}\big)\|_{L^2}
\lesssim \|u_1-u_2\|_{L^2}+\|e^{\pm \frac{i}{2}F_1}-e^{\pm \frac{i}{2}F_2}\|_{L^{\infty}}\|u_1\|_{L^2}.
\end{equation}
\end{proof}

\section{A priori estimates in $H^s(\mathbb R)$ for $s\ge0$}

In this section we will derive \textit{a priori} estimates on a solution $u$ to
\eqref{BO} at the $H^s$- level, for $s \ge 0$. First, following Tao in \cite{Ta},
we perform a nonlinear transformation on the equation to weaken the high-low frequency
interaction in the nonlinearity.  Furthermore, since we want to reach $L^2$, we will need
to use Bourgain spaces. This requires a new bilinear estimate which will be derive in
Subsection 3.2.

\subsection{The gauge transformation} \label{GT}
Let $u$ be a solution to the equation in \eqref{BO}. First, we
construct a spatial primitive  $F=F[u]$ of $u$, \textit{i.e.}
$\partial_xF=u$, that satisfies the equation :
\begin{equation} \label{gauge0}
\partial_tF=-\mathcal{H}\partial^2_xF+\frac12(\partial_xF)^2.
\end{equation}
It is worth noticing that these two properties defined $ F $ up to a constant. In order to construct $F$ for $u$ with low regularity,
we use the construction of Burq and Planchon in \cite{BP}.
Consider $\psi \in C_0^{\infty}(\mathbb R)$ such that $\int_{\mathbb
R}\psi(y)dy=1$ and define
\begin{equation}\label{defF}
F(x,t)=\int_{\mathbb R}\psi(y)\Big(\int_y^xu(z,t)dz \Big)dy+G(t),
\end{equation}
as a mean of antiderivatives of $u$. Obviously, $\partial_xF=u$
and
\begin{displaymath}
\begin{split}
\partial_tF(x,t)&=\int_{\mathbb R}\psi(y)\Big(\int_y^x\partial_tu(z,t)dz \Big)dy+G'(t)\\
&=\int_{\mathbb
R}\psi(y)\Big(\int_y^x\big(-\mathcal{H}\partial_z^2u(z,t)
+\frac12\partial_z(u(z,t)^2) \big)dz \Big)dy+G'(t)\\
&=-\mathcal{H}\partial_xu(x,t)+\frac12u(x,t)^2
+\int_{\mathbb R}\big(\mathcal{H}\psi'(y)u(y,t)-\psi(y)\frac12u(y,t)^2\big)dy+G'(t).
\end{split}
\end{displaymath}
Therefore we choose $G$ as
\begin{displaymath}
G(t)=\int_0^t\int_{\mathbb
R}\big(-\mathcal{H}\psi'(y)u(y,s)+\psi(y)\frac12u(y,s)^2\big)dyds,
\end{displaymath}
to ensure that \eqref{gauge0} is satisfied.
Observe that this construction makes sense for $u \in L^2_{\text{loc}}(\mathbb R^2)$.
Next, we introduce the new unknown
\begin{equation} \label{gauge1}
W=P_{+hi}\big(e^{-\frac{i}{2}F}\big) \quad \text{and} \quad
w=\partial_xW=-\frac{i}{2}P_{+hi}\big(e^{-\frac{i}{2}F}u\big).
\end{equation}
Then, it follows from \eqref{gauge0} and the identity $\mathcal{H}=-i(P_+-P_-)$ that
\begin{displaymath}
\begin{split}
\partial_tW+\mathcal{H}\partial_x^2W&=\partial_tW-i\partial_x^2W
=-\frac{i}2P_{+hi}\big(e^{-\frac{i}{2}F}(\partial_tF-i\partial_x^2F-\frac12(\partial_xF)^2)\big) \\
&=-P_{+hi}\big(WP_-\partial_xu\big)-P_{+hi}\big(P_{lo}e^{-\frac{i}{2}F}P_-\partial_xu\big),
\end{split}
\end{displaymath}
since the term
$-P_{+hi}\big(P_{-hi}e^{-\frac{i}{2}F}P_-\partial_xu\big)$ cancels
due to the frequency localization. Thus, it follows differentiating
that
\begin{equation} \label{gauge2}
\partial_tw-i\partial_x^2w=-\partial_xP_{+hi}\big(WP_-\partial_xu\big)
-\partial_xP_{+hi}\big(P_{lo}e^{-\frac{i}{2}F}P_-\partial_xu\big).
\end{equation}
On the other hand, one can write $u$ as
\begin{equation} \label{gauge3}
\begin{split}
u&=F_x=e^{\frac{i}{2}F}e^{-\frac{i}{2}F}F_x=2ie^{\frac{i}{2}F}\partial_x\big(e^{-\frac{i}{2}F}\big)
\\ &=2ie^{\frac{i}{2}F}w-e^{\frac{i}{2}F}P_{lo}(e^{-\frac{i}{2}F}u)
-e^{\frac{i}{2}F}P_{-hi}(e^{-\frac{i}{2}F}u),
\end{split}
\end{equation}
so that it follows from the frequency localization
\begin{equation} \label{gauge3}
\begin{split}
P_{+HI}u&=2iP_{+HI}\big(e^{\frac{i}{2}F}w\big)-P_{+HI}\big(P_{+hi}
e^{\frac{i}{2}F}P_{lo}(e^{-\frac{i}{2}F}u)\big)
\\ & \quad
+2iP_{+HI}\big(P_{+HI}e^{\frac{i}{2}F}\partial_xP_{-hi}e^{-\frac{i}{2}F}\big).
\end{split}
\end{equation}

\begin{remark}
Note that the use of $P_{+HI}$ allows to replace $e^{\frac{i}{2}F}$
by $P_{+hi}e^{\frac{i}{2}F}$ in the second term on the right-hand
side of \eqref{gauge3}. This fact will be useful to obtain at least
a quadratic term in $\|u\|_{L^{\infty}_TL^2_x}$ on the right-hand
side of estimate \eqref{apriori u.2} in Proposition \ref{apriori u}.
\end{remark}

Then, we have the following \textit{a priori} estimates on $u$ in terms of $w$.
\begin{proposition} \label{apriori u}
Let $0 \le s \le 1$, $0<T\le1$, $0\le\theta\le 1$ and $u$ be a
solution to \eqref{BO} in the time interval $[0,T]$. Then, it holds
that
\begin{equation} \label{apriori u.1}
\|u\|_{X^{s-\theta,\theta}_T} \lesssim \|u\|_{L
^{\infty}_TH^s_x}+\|u\|_{L^4_{T,x}}\|J^s_xu\|_{L^4_{T,x}}.
\end{equation}
Moreover, if $0 \le s \le \frac14$, it holds that
\begin{equation} \label{apriori u.2}
\|J^s_xu\|_{L^p_TL^q_x} \lesssim \|u_0\|_{L^2}
+\big(1+\|u\|_{L^{\infty}_TL^2_x}\big)\big(\|w\|_{Y^{s}_T}+\|u\|_{L^{\infty}_TL^2_x}^2\big),
\end{equation}
for $(p,q)=(\infty,2)$ or $(4,4)$.
\end{proposition}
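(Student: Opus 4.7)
The plan is to handle the two estimates separately: \eqref{apriori u.1} is a straightforward consequence of the BO equation and of interpolation in Bourgain spaces, while \eqref{apriori u.2} is obtained by substituting the gauge identity \eqref{gauge3} into the high-positive-frequency part of $u$.

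For \eqref{apriori u.1}, I would reduce to the two endpoints $\theta=0$ and $\theta=1$. At $\theta=0$, $\|u\|_{X^{s,0}_T}\lesssim T^{1/2}\|u\|_{L^\infty_T H^s_x}$ is trivial. At $\theta=1$, after constructing an $X^{s-1,1}$-extension of $u$ via the Duhamel formula and Lemmas \ref{prop1.1}--\ref{prop1.2}, the task reduces to estimating the right-hand side of the equation:
\[
\|u\|_{X^{s-1,1}_T}\lesssim \|u\|_{L^\infty_T H^s_x}+\tfrac12\|\partial_x(u^2)\|_{L^2_T H^{s-1}_x}\lesssim \|u\|_{L^\infty_T H^s_x}+\|u^2\|_{L^2_T H^s_x},
\]
and Proposition \ref{leibrule} combined with H\"older controls $\|u^2\|_{L^2_T H^s_x}$ by $\|u\|_{L^4_{T,x}}\|J^s_x u\|_{L^4_{T,x}}$. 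Complex interpolation of the weight $\langle\tau+|\xi|\xi\rangle^\theta\langle\xi\rangle^{-\theta}$ between the two endpoints then delivers the intermediate cases.

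For \eqref{apriori u.2}, reality of $u$ gives $P_{-HI}u=\overline{P_{+HI}u}$, so it suffices to treat $P_{LO}u$ and $P_{+HI}u$. Bernstein's inequality reduces $\|J^s_x P_{LO}u\|_{L^p_T L^q_x}$ to $\|u\|_{L^\infty_T L^2_x}$ (harmlessly absorbing a factor $T^{1/p}\le 1$). For $P_{+HI}u$ I substitute the three-term decomposition \eqref{gauge3}. The first piece $2iP_{+HI}(e^{iF/2}w)$ is handled by Lemma \ref{lemma1} (the restriction $s\le 1/4$ being precisely the condition $\alpha\le 1/q$ of that lemma in the $L^4_x$-case), followed by $Y^s_T\hookrightarrow C_TH^s_x$ when $(p,q)=(\infty,2)$ and by a dyadic application of the Bourgain--Strichartz estimate \eqref{bourgstrich.3} to $w\in X^{s,1/2}_T$ when $(p,q)=(4,4)$; this produces $(1+\|u\|_{L^\infty_T L^2_x})\|w\|_{Y^s_T}$. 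The second piece $P_{+HI}(P_{+hi}e^{iF/2}\cdot P_{lo}(e^{-iF/2}u))$ is controlled by a fractional Leibniz rule together with $\|P_{+hi}e^{iF/2}\|_{H^1}\lesssim \|u\|_{L^2}$ (coming from $\partial_x e^{\pm iF/2}=\mp\frac{i}{2}e^{\pm iF/2}u$) and $\|P_{lo}(e^{-iF/2}u)\|_{L^\infty}\lesssim \|u\|_{L^2}$ (Bernstein), delivering the quadratic contribution $\|u\|_{L^\infty_T L^2_x}^2$.

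The third piece $P_{+HI}(P_{+HI}e^{iF/2}\cdot\partial_x P_{-hi}e^{-iF/2})$ is the one I expect to be the main obstacle, since one full derivative must be shared between two factors that individually lie only one derivative above the $L^2$-regularity governed by $\|u\|_{L^2}$. This is however exactly the $P_+(f\cdot P_-\partial_x g)$ structure covered by Lemma \ref{lemma2}: splitting the total $1+s$ derivatives between the two exponentials and using $\|J^\beta_x P_{\pm hi}e^{\pm iF/2}\|_{L^q}\lesssim \|u\|_{L^2}$ for appropriate $\beta,q$ produces again a quadratic $\|u\|_{L^\infty_T L^2_x}^2$. Finally, the constant term $\|u_0\|_{L^2}$ in the estimate is nothing more than $\|u(0)\|_{L^2}\le \|u\|_{L^\infty_T L^2_x}$, which one may trade for $\|u_0\|_{L^2}$ via the $L^2$-conservation law available on smooth solutions.
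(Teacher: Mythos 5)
Your proposal is correct and follows essentially the same route as the paper: for \eqref{apriori u.1}, an explicit extension plus interpolation between the $X^{s,0}$ and $X^{s-1,1}$ endpoints, with the equation and the fractional Leibniz rule controlling the $\theta=1$ endpoint (the paper builds the extension by freezing $U(-t)u(t)$ outside $[0,T]$ rather than via Duhamel, but the effect is identical, and like you it interpolates the two bounds for one fixed extension); and for \eqref{apriori u.2}, the same splitting into $P_{LO}u$ and $P_{+HI}u$, the same three-term substitution of \eqref{gauge3}, with Lemma \ref{lemma1} plus \eqref{bourgstrich.3} for the $e^{iF/2}w$ piece, the fractional Leibniz rule for the second piece, and Lemma \ref{lemma2} with the derivative split $\alpha_1=\alpha_2=(1+s)/2$ for the third. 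The one place you genuinely diverge is $P_{LO}u$: you bound it by $\|u\|_{L^\infty_TL^2_x}$ via Bernstein and then trade this for $\|u_0\|_{L^2}$ using $L^2$-conservation, whereas the paper writes the Duhamel formula for $P_{LO}u$ and bounds the nonlinear contribution by $\|u\|_{L^\infty_TL^2_x}^2$, obtaining $\|u_0\|_{L^2}+\|u\|_{L^\infty_TL^2_x}^2$ without invoking conservation. Your shortcut is legitimate for the smooth solutions to which the proposition is applied (and is needed there, since a bare linear $\|u\|_{L^\infty_TL^2_x}$ on the right would ruin the bootstrap of Subsection 4.1), but the paper's Duhamel treatment is the one that survives when the same scheme is rerun on the difference $v=u_1-u_2$ in Subsection 4.2, where there is no conservation law and one instead exploits $P_{LO}v(0)=0$.
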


\begin{remark} It is worth notice that \eqref{apriori u.2} could be
rewritten in a convenient form for $ s \ge \frac14$ (cf. \cite{Mo}).
\end{remark}

\begin{proof}
We begin with the proof of estimate \eqref{apriori u.1}
and construct a suitable extension in time $\tilde{u}$ of $u$.
First, we consider $v(t)=U(-t)u(t)$ on the time interval $[0,T]$ and extend
$v$ on $[-2,2]$ by setting $\partial_tv=0$ on $[-2,2] \setminus [0,T]$.
Then, it is pretty clear that
\begin{displaymath}
\|\partial_tv\|_{L^2_{[-2,2]}H_x^r}=\|\partial_tv\|_{L^2_{T}H_x^r},
\quad \text{and} \quad \|v\|_{L^2_{[-2,2]}H_x^r} \lesssim
\|v\|_{L^{\infty}_{T}H_x^r},
\end{displaymath}
for all $r \in \mathbb R$. Now, we define
$\tilde{u}(x,t)=\eta(t)U(t)v(t)$. Obviously, it holds
\begin{equation} \label{apriori u.1b}
\|\tilde{u}\|_{X^{s-1,1}}\lesssim
\|\partial_tv\|_{L^2_{[-2,2]}H_x^{s-1}}
+\|v\|_{L^2_{[-2,2]}H_x^{s-1}}
\lesssim \|\partial_tv\|_{L^2_{T}H_x^{s-1}}
+\|v\|_{L^{\infty}_{T}H_x^{s-1}},
\end{equation}
and
\begin{equation} \label{apriori u.1bb}
\|\tilde{u}\|_{X^{s,0}} \lesssim \|v\|_{L^2_{[-2,2]}H^s_x}\lesssim
\|v\|_{L^{\infty}_TH^s_x}=\|u\|_{L^{\infty}_{T}H_x^s}.
\end{equation}
Then, it is deduced interpolating between \eqref{apriori u.1b} and \eqref{apriori u.1bb} and using the identity
\begin{displaymath}
\partial_tv=
\mathcal{H}\partial_x^2 U(-t)u+U(-t)\partial_t u= U(-t) \Bigl[
\mathcal{H}\partial_x^2 u+\partial_tu\Bigr],
\end{displaymath}
that
\begin{equation} \label{apriori u.1bbb}
\|\tilde{u}\|_{X^{s-\theta,\theta}}\lesssim
\|\partial_tu+\mathcal{H}\partial_x^2u\|_{L^2_TH^{s-1}_x}
+\|u\|_{L^{\infty}_TH^s_x},
\end{equation}
for all $0 \le \theta \le 1$. Therefore, the fact that $u$ is a solution to \eqref{BO} and the fractional Leibniz rule (cf. \cite{KPV2}) yield
\begin{displaymath}
\|\tilde{u}\|_{X^{s-\theta,\theta}}\lesssim
\|u\|_{L^{\infty}_TH^s_x}+\|u\|_{L^4_{x,T}}\|J^s_xu\|_{L^4_{x,T}},
\end{displaymath}
which concludes the proof of \eqref{apriori u.1} since $\tilde{u}$ extends $u$ outside of $[0,T]$.

Next, we turn to the proof of \eqref{apriori u.2}. Let $0 \le T \le
1$, $0 \le s \le \frac14$, $(p,q)=(\infty,2)$ or $(4,4)$ and $u$ a
smooth solution to the equation in \eqref{BO}. Since $u$ is
real-valued, it holds $P_-u=\overline{P_+u}$, so that
\begin{equation} \label{apriori u.3}
\|J_x^su\|_{L^p_TL^q_x} \lesssim
\|P_{LO}u\|_{L^p_TL^q_x}+\|D_x^sP_{+HI}u\|_{L^p_TL^q_x}.
\end{equation}

To estimate the second term on the right-hand side of \eqref{apriori u.3}, we use \eqref{gauge3} to deduce that
\begin{displaymath}
\begin{split}
\|D_x^sP_{+HI}u\|_{L^p_TL^q_x}&\lesssim
\big\|D_x^sP_{+HI}\big(e^{\frac{i}{2}F}w\big)\big\|_{L^p_TL^q_x}
+\big\|D_x^sP_{+HI}\big(P_{+hi}e^{\frac{i}{2}F}P_{lo}(e^{-\frac{i}{2}F}u)\big)\big\|_{L^p_TL^q_x}
\\ & \quad
+\big\|D_x^sP_{+HI}\big(P_{+HI}e^{\frac{i}{2}F}\partial_xP_{-hi}e^{-\frac{i}{2}F}\big)\big\|_{L^p_TL^q_x} \\
& :=I+II+III.
\end{split}
\end{displaymath}
Estimates \eqref{bourgstrich.3} and \eqref{lemma1.1} yield
\begin{equation} \label{apriori u.4}
I \lesssim (1+\|u\|_{L^{\infty}_TL^2_x})\|J^s_xw\|_{L^p_TL^q_x} \lesssim
(1+\|u\|_{L^{\infty}_TL^2_x})\|w\|_{Y^s_T}.
\end{equation}
On the other hand the fractional Leibniz rule (cf. Lemma \ref{leibrule}), H\"older's inequality in time and the Sobolev embedding imply that
\begin{equation} \label{apriori u.5}
\begin{split}
II &\lesssim
\|D^s_xP_{+hi}e^{\frac{i}{2}F}\|_{L^p_TL^q_x}\|P_{+lo}\big(ue^{-\frac{i}{2}F}\big)\|_{L^{\infty}_{T,x}}
\\ & \quad +\|P_{+hi}e^{\frac{i}{2}F}\|_{L^{\infty}_{T,x}}
\|D_x^sP_{+lo}\big(ue^{-\frac{i}{2}F}\big)\|_{L^p_TL^q_x}
\\ & \lesssim
\|\partial_xP_{+hi}e^{\frac{i}{2}F}\|_{L^p_TL^2_x}\|P_{+lo}\big(ue^{-\frac{i}{2}F}\big)\|_{L^{\infty}_{T}L^2_x}
\\ & \lesssim
T^{\frac1p}\|u\|_{L^{\infty}_TL^2_x}^2.
\end{split}
\end{equation}
Finally estimate \eqref{lemma2.1} with $\alpha_1=\alpha_2=(1+s)/2$ and $q_1=q_2=q$, H\"older's inequality in time and the Sobolev embedding lead to
\begin{equation} \label{apriori u.6}
\begin{split}
III &\lesssim
\|D^{(1+s)/2}_xP_{+HI}e^{\frac{i}{2}F}\|_{L^{2p}_TL^{2q}_x}\|D^{(1+s)/2}_xP_{-hi}e^{-\frac{i}{2}F}\|_{L^{2p}_TL^{2q}_x}
\\ & \lesssim T^{\frac1p}
\|D^{1+\frac{s}{2}-\frac{1}{2q}}_xP_{+HI}e^{\frac{i}{2}F}\|_{L^{\infty}_TL^{2}_x}
\|D^{1+\frac{s}{2}-\frac{1}{2q}}_xP_{-hi}e^{-\frac{i}{2}F}\|_{L^{\infty}_TL^{2}_x}
\\ & \lesssim T^{\frac1p}
\|\partial_xP_{+HI}e^{\frac{i}{2}F}\|_{L^{\infty}_TL^{2}_x}
\|\partial_xP_{-hi}e^{-\frac{i}{2}F}\|_{L^{\infty}_TL^{2}_x}
\\ &\lesssim T^{\frac1p}\|u\|_{L^{\infty}_TL^2_x}^2,
\end{split}
\end{equation}
since $0\le s \le \frac1q$.
Therefore, we deduce gathering \eqref{apriori u.4}--\eqref{apriori
u.6} that
\begin{equation} \label{apriori u.7}
\|D_x^sP_{+HI}u\|_{L^p_TL^q_x} \lesssim
\big(1+\|u\|_{L^{\infty}_TL^2_x}\big)\big(\|w\|_{Y^s_T}+T^{\frac1p}\|u\|_{L^{\infty}_TL^2_x}^2\big).
\end{equation}

Next we turn to the first term on the right-hand side of
\eqref{apriori u.3} and consider the integral equation satisfied by
$P_{LO}u$,
\begin{equation} \label{apriori u.8b}
P_{LO}u=U(t)P_{LO}u_0+\int_0^tU(t-\tau)P_{LO}\partial_x(u^2)(\tau)d\tau.
\end{equation}
First, observe that
\begin{displaymath}
\|P_{LO}u\|_{L^p_TL^q_x} \lesssim T^{\frac1p}\|P_{LO}u\|_{L^{\infty}_TL^2_x}
\end{displaymath}
Then, we deduce from \eqref{apriori u.8b}, using the fact that $U$ is a
unitary group in $L^2$ and Bernstein's inequality, that
\begin{equation} \label{apriori u.8}
\begin{split}
\|P_{LO}u\|_{L^p_TL^q_x} &\lesssim
T^{\frac1p}\|u_0\|_{L^2_x}+T^{1+\frac1p}\|\partial_xP_{LO}(u^2)\|_{L^{\infty}_TL^2_x}
\\ & \lesssim
T^{\frac1p}\|u_0\|_{L^2_x}+T^{1+\frac1p}\|P_{LO}(u^2)\|_{L^{\infty}_TL^1_x}
\\ & \lesssim \|u_0\|_{L^2_x}+\|u\|_{L^{\infty}_TL^2_x}^2,
\end{split}
\end{equation}
since $0 \le T \le 1$.

Thus, estimate \eqref{apriori u.2} follows combining \eqref{apriori
u.3}, \eqref{apriori u.7} and \eqref{apriori u.8}. This concludes
the proof of Proposition \ref{apriori u}.
\end{proof}

\subsection{Bilinear estimates}
The aim of this subsection is to derive the following estimate on
$\|w\|_{Y^s_T}$ :
\begin{proposition} \label{apriori w}
Let $0<T\le 1$, $0\le s \le \frac12$ and $u$ be a solution to
\eqref{BO} on the time interval $[0,T]$. Then it holds that
\begin{equation} \label{apriori w.1}
\begin{split}
\|w\|_{Y^s_T} &\lesssim \big(1+\|u_0\|_{L^2}
\big)\|u_0\|_{H^s}+\|u\|_{L^4_{x,T}}^2
\\ & \quad
+\|w\|_{X^{s,1/2}_T}\big(\|u\|_{L^{\infty}_TL^2_x}+\|u\|_{L^4_{x,T}}+\|u\|_{X^{-1,1}_T}\big).
\end{split}
\end{equation}
\end{proposition}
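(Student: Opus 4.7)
The overall strategy is to regard $w$ as a solution of the inhomogeneous linear Schr\"odinger-type equation \eqref{gauge2}, namely $\partial_t w-i\partial_x^2 w = G_1+G_2$ with
\begin{equation*}
G_1 := -\partial_x P_{+hi}(W\, P_-\partial_x u),\qquad G_2 := -\partial_x P_{+hi}(P_{lo}e^{-\frac{i}{2}F}\,P_-\partial_x u).
\end{equation*}
After extending $u$ (and hence $F$, $W$, $w$) from $[0,T]$ to $\R$ in the spirit of the time-extension already carried out in the proof of Proposition \ref{apriori u}, Duhamel's formula combined with the linear estimates of Lemmas \ref{prop1.1} and \ref{prop1.2} reduces the proof of \eqref{apriori w.1} to bounding
\begin{equation*}
\|w(0)\|_{H^s} + \sum_{j=1,2}\Bigl(\|G_j\|_{X^{s,-1/2}_T}+\|G_j\|_{\widetilde{Z}^{s,-1}_T}\Bigr)
\end{equation*}
by the right-hand side of \eqref{apriori w.1}.

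For the initial data contribution, $w(0)=-\tfrac{i}{2}P_{+hi}(e^{-\frac{i}{2}F_0}u_0)$, and a direct application of Lemma \ref{lemma1} with $q=2$ and $\alpha=s\in[0,\tfrac12]$ yields $\|w(0)\|_{H^s} \lesssim (1+\|u_0\|_{L^2})\|u_0\|_{H^s}$. The term $G_2$ is essentially a low-frequency contribution: since $P_{lo}(e^{-\frac{i}{2}F})$ has Fourier support in $\{|\xi|\le 2\}$ and the outer $P_{+hi}$ restricts the output to $\{|\xi|\ge 2\}$, only a bounded-frequency slice of $P_-\partial_x u$ participates nontrivially. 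Dualizing, moving the outer derivative onto a test function, and invoking the $L^4$-Strichartz estimate \eqref{bourgstrich.2} together with \eqref{lemma1.1}, one controls $G_2$ in both required norms by $\|u\|_{L^4_{x,T}}^2$.

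The heart of the matter is the bilinear estimate for $G_1$. Because $W$ has Fourier support in $\{\xi\ge 2\}$ and $P_-\partial_x u$ in $\{\xi<0\}$, while $P_{+hi}$ forces the output frequency to lie in $\{\xi\ge 2\}$, this is a ``high $\times$ high $\to$ high'' interaction for which the BO resonance function $\Omega(\xi_1,\xi_2)=(\xi_1+\xi_2)|\xi_1+\xi_2|-\xi_1|\xi_1|-\xi_2|\xi_2|$ satisfies $|\Omega|\sim |\xi_2|(\xi_1+\xi_2)$. Consequently, at least one of the three modulations $\sigma=\tau+|\xi|\xi$, $\sigma_1=\tau_1+|\xi_1|\xi_1$, $\sigma_2=\tau_2+|\xi_2|\xi_2$ must dominate. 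The plan is to decompose $W$ and $u$ dyadically and, for each pair of dyadic blocks, split the frequency-modulation domain according to which modulation is largest. In the regions where $\sigma$ or $\sigma_1$ dominates, the outer $\partial_x$ in $G_1$ is paid for by the modulation weight, and the $L^4$-Strichartz bound \eqref{bourgstrich.2} gives contributions controlled by $\|w\|_{X^{s,1/2}_T}\bigl(\|u\|_{L^4_{x,T}}+\|u\|_{L^\infty_T L^2_x}\bigr)$. In the region where $\sigma_2$ dominates, the full dispersive gain is absorbed into the $u$-factor, producing the last term $\|w\|_{X^{s,1/2}_T}\|u\|_{X^{-1,1}_T}$.

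The main obstacle is precisely this $\sigma_2$-dominated region. As emphasized in the introduction, the classical Cauchy--Schwarz scheme of Kenig--Ponce--Vega (which suffices when $s>0$, and is carried out in the appendix) rests on replacing $\widehat u$ by $|\widehat u|$ inside the bilinear Fourier integrals, and thereby tacitly uses $\mathcal{F}^{-1}|\widehat u|\in L^4_{x,t}$; at the $L^2$-level this is unavailable, and one only has $u\in L^4_{x,t}$. The resolution is to perform the Littlewood--Paley case analysis \emph{without} taking absolute values, partitioning the integration domain into pairwise disjoint subdomains on which the bilinear integral factorizes as a genuine product of norms of $w$ and $u$. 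Producing this disjoint partition and verifying that each piece closes is the technical core of the argument, and is the principal new ingredient in reaching $s=0$.
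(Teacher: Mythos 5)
Your proposal follows essentially the same route as the paper: Duhamel combined with the linear estimates of Lemmas \ref{prop1.1}--\ref{prop1.2} reduces the claim to the initial-data bound \eqref{estimatew0}, a low-frequency bound for $G_2$ (the paper's Lemma \ref{lemma3}), and the bilinear estimates of Proposition \ref{bilincrit} for $G_1$, the latter proved exactly as you describe by a Littlewood--Paley decomposition into disjoint modulation-dominated regions so as to avoid replacing $\widehat{u}$ by $|\widehat{u}|$. The only cosmetic difference is in the treatment of $G_2$, where the paper passes through the embedding $L^{1+\epsilon'}_tH^s_x\hookrightarrow X^{s,-\frac12}\cap\widetilde{Z}^{s,-1}$ and the localized Leibniz rule of Lemma \ref{lemma2} rather than a duality argument, but the resulting bound $\|u\|_{L^4_{x,T}}^2$ is the same.
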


The main tool to  prove Proposition \ref{apriori w} is  the following crucial
bilinear estimates.
\begin{proposition} \label{bilincrit}
Let $s \ge 0$. Then we have that
\begin{equation} \label{bilincrit1}
\begin{split}
\|\partial_xP_{+hi}\big(\partial_x^{-1}wP_-\partial_x&u\big)\|_{X^{s,-\frac12}}
\\& \lesssim
\|w\|_{X^{s,\frac12}}\big(\|u\|_{L^2_{x,t}}+\|u\|_{L^4_{x,t}}+\|u\|_{X^{-1,1}}\big),
\end{split}
\end{equation}
and
\begin{equation} \label{bilincrit1b}
\begin{split}
\|\partial_xP_{+hi}\big(\partial_x^{-1}wP_-\partial_x&u\big)\|_{\widetilde{Z}^{s,-1}}
\\ & \lesssim
\|w\|_{X^{s,\frac12}}\big(\|u\|_{L^2_{x,t}}+\|u\|_{L^4_{x,t}}+\|u\|_{X^{-1,1}}\big).
\end{split}
\end{equation}
\end{proposition}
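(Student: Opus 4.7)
My plan is to prove both \eqref{bilincrit1} and \eqref{bilincrit1b} by a Littlewood-Paley decomposition combined with the BO resonance identity, then a case analysis on the frequency interactions and on which of the three modulations is dominant. First I decompose $w = \sum_{N_1 \ge 2} P_{N_1} w$ (spectrally supported in $\{\xi_1 > 0\}$ with $\xi_1 \sim N_1$) and $P_- u = \sum_{N_2} P_- P_{N_2} u$ (supported in $\{\xi_2<0\}$ with $|\xi_2|\sim N_2$), and denote by $N$ the dyadic size of the output frequency $\xi = \xi_1 + \xi_2$. The constraints $\xi_1>0$, $\xi_2<0$, $\xi>0$ with $P_{+hi}$ force $N_1\gtrsim N_2$ and leave exactly three regimes: high-low $N_2 \ll N_1 \sim N$, balanced $N \sim N_1 \sim N_2$, and high-high $N_1 \sim N_2 \gg N$. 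A direct computation gives the resonance identity
\begin{equation*}
\sigma - \sigma_1 - \sigma_2 = \xi|\xi| - \xi_1|\xi_1| - \xi_2|\xi_2| = 2\xi_2 \xi,
\end{equation*}
where $\sigma = \tau+|\xi|\xi$, $\sigma_j = \tau_j+|\xi_j|\xi_j$. Hence $|\sigma-\sigma_1-\sigma_2|\sim N_2 N$, and at least one of $|\sigma|,|\sigma_1|,|\sigma_2|$ is $\gtrsim N_2 N$.

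By duality, proving \eqref{bilincrit1} reduces to estimating the trilinear integral
\begin{equation*}
\int \frac{\xi \langle\xi\rangle^s\, \xi_2}{\xi_1\langle\xi_1\rangle^s \langle\sigma\rangle^{1/2}\langle\sigma_1\rangle^{1/2}}\, g(\xi,\tau)\, f_1(\xi_1,\tau_1)\, \widehat{P_- u}(\xi_2,\tau_2)\, d\xi_1\, d\tau_1\, d\xi\, d\tau
\end{equation*}
by $\|g\|_{L^2}$ times the right-hand side of \eqref{bilincrit1}, where $f_1 = \langle\xi_1\rangle^s \langle\sigma_1\rangle^{1/2} |\widehat{w}|$ so that $\|f_1\|_{L^2}=\|w\|_{X^{s,1/2}}$. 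The outer multiplier $|\xi\xi_2/\xi_1|$ is of size $N_2$ in the high-low regime and of size $N$ in the balanced and high-high regimes, so the weight $\langle\xi\rangle^s/\langle\xi_1\rangle^s\lesssim 1$ (valid since $s\ge 0$ and $N\lesssim N_1$) is harmless. In each of the three frequency regimes, and for each of the three subcases depending on which modulation is dominant, I distribute the modulation weight $\langle\sigma_j\rangle^{1/2}$ by Cauchy-Schwarz and then apply either Plancherel to extract the $L^2$ norm of $u$, the Strichartz estimate \eqref{bourgstrich.2} to extract the $L^4_{x,t}$ norm of $u$, or the Cauchy-Schwarz technique of Kenig-Ponce-Vega to extract the $X^{-1,1}$ norm of $u$. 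The resonance gain $(N_2 N)^{1/2}$ precisely absorbs the multiplier $|\xi\xi_2/\xi_1|$ in every subcase, giving the three terms on the right-hand side of \eqref{bilincrit1}.

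The estimate \eqref{bilincrit1b} is handled by the same decomposition; the only difference is that the output weight is $\langle\sigma\rangle^{-1}$ integrated in $\tau$ inside an $L^2_\xi$ norm. Writing $\langle\sigma\rangle^{-1}=\langle\sigma\rangle^{-1/2-\epsilon}\langle\sigma\rangle^{-1/2+\epsilon}$ and applying Cauchy-Schwarz in $\tau$ reduces matters to an essentially identical trilinear estimate with $\langle\sigma\rangle^{-1/2}$ replaced by $\langle\sigma\rangle^{-1/2+\epsilon}$, treated by the same case analysis. The main obstacle, as emphasized in the introduction, is that in the subcases where the $L^4_{x,t}$ Strichartz estimate is applied to a factor involving $u$ one cannot replace $\widehat{u}$ by $|\widehat{u}|$, since $\mathcal{F}^{-1}(|\widehat{u}|)$ is not known to lie in $L^4_{x,t}$. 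This forces the case decomposition to produce \emph{genuinely disjoint} subregions of $(\xi_1,\xi_2,\tau_1,\tau_2)$-space so that after fixing the subregion one can take absolute values of $g$, $f_1$, and the modulation weights only, while keeping $\widehat{u}$ as a signed function before invoking Plancherel inside the Strichartz bound. This disjoint partition is the combinatorial heart of the proof, and is the reason the Littlewood-Paley decomposition must be introduced at the very beginning.
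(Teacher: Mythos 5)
Your treatment of \eqref{bilincrit1} is essentially the paper's own proof: duality, Littlewood--Paley decomposition, the resonance identity $\sigma_1+\sigma_2-\sigma=-2\xi\xi_2$, a partition into disjoint regions according to which modulation dominates, and the key precaution of never replacing $\widehat{u}$ by $|\widehat{u}|$ in the subcases where the $L^4_{x,t}$ norm of $u$ is invoked. (The paper peels off the contribution $|\xi_2|\lesssim 1$ first, which is where the $\|u\|_{L^2_{x,t}}$ term comes from, and in the $\sigma_2$-dominant region it does not use the Kenig--Ponce--Vega weighted Cauchy--Schwarz but simply the pointwise bound $|\xi\xi_1^{-1}\xi_2^2|\langle\sigma_2\rangle^{-1}\lesssim 1$ followed by H\"older against $\langle\sigma_2\rangle|\xi_2|^{-1}|\widehat u|$ in $L^2$; these are cosmetic differences.)

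The gap is in your reduction of \eqref{bilincrit1b}. Writing $\langle\sigma\rangle^{-1}=\langle\sigma\rangle^{-\frac12-\epsilon}\langle\sigma\rangle^{-\frac12+\epsilon}$ and Cauchy--Schwarzing in $\tau$ does give $\|f\|_{\widetilde{Z}^{s,-1}}\lesssim\|f\|_{X^{s,-\frac12+\epsilon}}$, but the resulting $X^{0,-\frac12+\epsilon}$ bilinear estimate is \emph{not} ``essentially identical'' to \eqref{bilincrit1} and in fact fails to follow from the same case analysis at $s=0$, precisely in the region where $|\sigma|$ is the dominant modulation. There the whole multiplier $|\xi\xi_2/\xi_1|\sim N_2$ must be absorbed by the gain from $\langle\sigma\rangle^{-\frac12}\lesssim (NN_2)^{-\frac12}$; with exponent $-\frac12+\epsilon$ one only gains $(NN_2)^{-\frac12+\epsilon}$, leaving in the high--low case a factor $(N_2/N_1)^{\frac12}(N_1N_2)^{\epsilon}$ whose $N_1^{2\epsilon}$ growth cannot be summed when $s=0$ (this is exactly why Proposition \ref{bilin} in the appendix requires $s>0$ and $\delta<s/10$). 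Since $s=0$ is the whole point of the proposition, this step must be replaced. The paper instead dualizes the $\widetilde{Z}^{s,-1}$ norm against functions $g_N\in L^2_\xi L^\infty_\tau$ and, in the $\sigma$-dominant region, performs an additional dyadic decomposition in the size of $|\sigma|\sim 2^qNN_2$, using
\begin{equation*}
\|g_N\,\chi_{\{|\sigma|\sim 2^qNN_2\}}\|_{L^2_{\xi,\tau}}\lesssim (2^qNN_2)^{\frac12}\|g_N\|_{L^2_\xi L^\infty_\tau},
\end{equation*}
which recovers the full half-power of the resonance while leaving a geometrically summable factor $2^{-q/2}$. Your Cauchy--Schwarz reduction does work in the regions where $\sigma_1$ or $\sigma_2$ dominates (and the paper uses it there), so the missing ingredient is confined to the $\sigma$-dominant region, but it is an essential one.
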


\begin{remark} Note that $\partial_x^{-1}w$ is well defined since $w$ is localized in high frequencies.
\end{remark}

\begin{proof} We will only give the proof in the case of $s=0$, since the case $s>0$ can be deduced by using similar arguments. By duality to prove \eqref{bilincrit1} is equivalent to prove that
\begin{equation} \label{bilincrit2}
\big|I\big| \lesssim \|h\|_{L^2_{x,t}}\|w\|_{X^{0,\frac12}}\big(\|u\|_{L^2_{x,t}}+\|u\|_{L^4_{x,t}}+\|u\|_{X^{-1,1}}\big),
\end{equation}
where
\begin{equation} \label{bilincrit3}
I=\int_{\mathcal{D}}\frac{\xi}
{\langle \sigma\rangle^{\frac12}}\widehat{h} (\xi,\tau)\xi_1^{-1}\widehat{w}(\xi_1,\tau_1)
\xi_2 \widehat{u}(\xi_2,\tau_2)d\nu,
\end{equation}
\begin{equation} \label{bilincrit3bb}
d\nu=d\xi d\xi_1 d\tau d\tau_1, \quad \xi_2=\xi-\xi_1, \quad \tau_2=\tau-\tau_1, \quad \sigma_i=\tau_i+\xi_i|\xi_i|, \ i=1,2,
\end{equation}
and
\begin{equation} \label{bilincrit3bbb}
\mathcal{D}=\big\{(\xi,\xi_1,\tau,\tau_1) \in \mathbb R^4 \ | \ \xi \ge 1, \ \xi_1 \ge 1 \ \text{and} \ \xi_2 \le 0 \big\}.
\end{equation}
Observe that we always have in $\mathcal{D}$ that
\begin{equation} \label{bilincrit3b}
\xi_1 \ge \xi \ge 1 \quad \text{and} \quad \xi_1 \ge |\xi_2|.
\end{equation}
In the case where $|\xi_2| \le 1$, we have by using H\"older's inequality and estimate \eqref{bourgstrich.2} that
\begin{displaymath}
\begin{split}
\big| I \big| &\lesssim \int_{\mathbb R^4}\frac{|\widehat{h}|}{\langle \sigma\rangle^{\frac12}}
|\widehat{w}(\xi_1,\tau_1)||\widehat{u}(\xi_2,\tau_2)|d\nu \\ &
\lesssim \bigl\|\Big(\frac{|\widehat{h}|}{\langle \sigma \rangle^{\frac12}}\Big)^{\vee}\bigr\|_{L^4_{x,t}}
\|(|\widehat{w}|)^{\vee}\|_{L^4_{x,t}}\|u\|_{L^2_{x,t}} \\ &
\lesssim \|h\|_{L^2_{x,t}}\|w\|_{X^{\frac38}}\|u\|_{L^2_{x,t}}.
\end{split}
\end{displaymath}
Then, from now on we will assume that $|\xi_2| \ge 1$ in $\mathcal{D}$.

By using a dyadic decomposition in space-frequency for the functions $ h$, $ w$ and $ u $ one can rewrite $ I $ as
\begin{equation} \label{bilincrit3bbbb}
I=\sum_{N,N_1,N_2} I_{N,N_1,N_2}
\end{equation}
with
\begin{displaymath}
I_{N, N_1,N_2}:= \int_{\mathcal{D}}\frac{\xi} {\langle
\sigma\rangle^{\frac12}}\widehat{P_Nh}
(\xi,\tau)\xi_1^{-1}\widehat{P_{N_1}w}(\xi_1,\tau_1) \xi_2
\widehat{P_{N_2}u}(\xi_2,\tau_2)d\nu,
\end{displaymath}
and the dyadic numbers $N, \ N_1$ and $N_2$ ranging from $1$ to $+\infty$.
Moreover, the resonance identity
\begin{equation} \label{bilincrit4}
\sigma_1+\sigma_2-\sigma=\xi_1^2+(\xi-\xi_1)|\xi-\xi_1|-\xi^2=-2\xi\xi_2
\end{equation}
holds in $\mathcal{D}$. Therefore, to calculate  $I_{N, N_1,N_2}$, we split the integration domain $\mathcal{D}$ in the following disjoint regions
\begin{equation} \label{bilincrit4b}
\begin{split}
\mathcal{A}_{N,N_2}&=\big\{(\xi,\xi_1,\tau,\tau_1) \in \mathcal{D} \ | \ |\sigma|\ge \frac 16  N N_2   \big\}, \\
\mathcal{B}_{N,N_2}&=\big\{(\xi,\xi_1,\tau,\tau_1) \in \mathcal{D} \ | \ |\sigma_1|\ge \frac 16  N N_2  \ , \ |\sigma|< \frac16 N N_2 \big\}, \\
\mathcal{C}_{N,N_2}&=\big\{(\xi,\xi_1,\tau,\tau_1) \in \mathcal{D} \ | \ |\sigma|< \frac16 N N_2 , \ |\sigma_1|< \frac16 N N_2 \ , |\sigma_2|\ge
 \frac16 NN_2 \big\},
\end{split}
\end{equation}
and denote by $I^{\mathcal{A}_{N,N_2}}_{N, N_1,N_2}$, $I^{\mathcal{B}_{N,N_2}}_{N, N_1,N_2}$, $I^{\mathcal{C}_{N,N_2}}_{N, N_1,N_2}$ the restriction of $I_{N, N_1,N_2}$ to each of these regions. Then, it follows that
$$
I_{N, N_1,N_2}=I^{\mathcal{A}_{N,N_2}}_{N, N_1,N_2}+I^{\mathcal{B}_{N,N_2}}_{N, N_1,N_2}+I^{\mathcal{C}_{N,N_2}}_{N, N_1,N_2}
$$
 and thus
\begin{equation} \label{bilincrit5}
\big|I\big| \le \big|I_{\mathcal{A}}\big|+\big|I_{\mathcal{B}}\big|+\big|I_{\mathcal{C}}\big|,
\end{equation}
where
$$
I_{\mathcal{A}}:=\sum_{N, N_1,N_2} I^{\mathcal{A}_{N,N_2}}_{N, N_1,N_2}, \; I_{\mathcal{B}}:=\sum_{N, N_1,N_2} I^{\mathcal{B}_{N,N_2}}_{N, N_1,N_2} \ \mbox{and } \ I_{\mathcal{C}}:=\sum_{N, N_1,N_2} I^{\mathcal{C}_{N,N_2}}_{N, N_1,N_2}\; .
 $$
Therefore,  it suffices to bound $\big|I_{\mathcal{A}}\big|$, $\big|I_{\mathcal{B}}\big|$ and $\big|I_{\mathcal{C}}\big|$. Note that one of the two following cases holds:
\begin{enumerate}
\item{} high-low interaction: $N_1 \sim N$ and $N_2 \le N_1$
\item{} high-high interaction: $N_1 \sim N_2$ and $N \le N_1$.
\end{enumerate}

\noindent \textit{Estimate for $\big|I_{\mathcal{A}}\big|$.} In the first case, we observe from the Cauchy-Schwarz inequality that
\begin{displaymath}
\begin{split}
\big|I_{\mathcal{A}}\big|&\sim\Big|\int_{\mathbb R^2}\widehat{h}\sum_{N_1}\sum_{j= 0}^{\frac{\ln(N_1)}{\ln(2)}}\phi_{N_1}\xi\langle \sigma\rangle^{-\frac12}\chi_{\{|\sigma|\ge\frac16{N_1}^22^{-j}\}}
\mathcal{F}\big(P_+\big(\partial_x^{-1}P_{N_1}wP_-\partial_xP_{2^{-j}N_1}u\big)\big)d\xi d\tau\Big|\\ &
\lesssim \|\widehat{h}\|_{L^2_{\xi,\tau}}
\Big\|\sum_{N_1}\sum_{j\ge 0} {N_1}^2 (N_1^2 2^{-j})^{-1}\phi_{N_1}
\Bigl| \mathcal{F}\big(P_+\big(\partial_x^{-1}P_{N_1}wP_-\partial_xP_{2^{-j}N_1}u\big)\big)\Bigr|
\Big\|_{L^2_{\xi,\tau}}.
\end{split}
\end{displaymath}
Then, the Plancherel identity and the triangular inequality imply that
\begin{displaymath}
\begin{split}
\big|I_{\mathcal{A}}\big| \lesssim  \|h \|_{L^2_{x,t}}  \sum_{j\ge 0} \Big(\sum_{N_1}   2^{j}
  \Bigr\|P_{N_1}\big(\partial_x^{-1}P_{N_1}wP_-\partial_xP_{2^{-j}N_1}u\big)\Bigl\|_{L^2_{x,t}}^2\Big)^{\frac12} .
\end{split}
\end{displaymath}
By using the H\"older and Bernstein inequalities, we deduce that
\begin{equation} \label{bilincrit6}
\begin{split}
\big|I_{\mathcal{A}}\big| &\lesssim  \|h \|_{L^2_{x,t}}
\sum_{j\ge0} \Big(\sum_{N_1} 2^{-j}
\|P_{N_1}w\|_{L^4_{x,t}}^2\|P_{2^{-j}N_1}u\|_{L^4_{x,t}}^2\Big)^{\frac12}\\
&\lesssim \|h \|_{L^2_{x,t}}
 \Big(\sum_{N}
\|P_{N_1}w\|_{L^4_{x,t}}^2\Big)^{\frac12}\|u\|_{L^4_{x,t}}.
\end{split}
\end{equation}
In the second case, it follows using the same strategy as in the
first case, that
\begin{displaymath}
\begin{split}
\big|I_{\mathcal{A}}\big| &\lesssim  \|h \|_{L^2_{x,t}} \\ & \times \sum_{j\ge0} \Big(\sum_{N_1}  (2^{-j}N_1)^2 (2^{-j}N_1 N_1)^{-1}
  \Bigr\|P_{2^{-j}N_1}\big(\partial_x^{-1}P_{N_1}wP_-\partial_xP_{N_1}u\big)\Bigl\|_{L^2_{x,t}}^2\Big)^{\frac12} \;,
\end{split}
\end{displaymath}
which implies using the H\"older and Bernstein inequalities
\begin{equation} \label{bilincrit7}
\begin{split}
\big|I_{\mathcal{A}}\big| &\lesssim  \|h \|_{L^2_{x,t}}
\sum_{j\ge0} \Big(\sum_{N_1} 2^{-j}
\|P_{N_1}w\|_{L^4_{x,t}}^2\|P_{N_1}u\|_{L^4_{x,t}}^2\Big)^{\frac12}\\
&\lesssim \|h \|_{L^2_{x,t}}
 \Big(\sum_{N_1}
\|P_{N_1}w\|_{L^4_{x,t}}^2\Big)^{\frac12}\|u\|_{L^4_{x,t}}.
\end{split}
\end{equation}
Therefore, we deduce gathering \eqref{bilincrit6}--\eqref{bilincrit7} and using estimate \eqref{bourgstrich.2} that
\begin{equation} \label{bilincrit9}
\big|I_{\mathcal{A}}\big| \le  \|h \|_{L^2_{x,t}}\|w\|_{X^{0,\frac38}}\|u\|_{L^4_{x,t}}.
\end{equation}

\noindent \textit{Estimate for $\big|I_{\mathcal{B}}\big|$.} By
using again the triangular and the Cauchy-Schwarz inequalities, we
have in the first case that
\begin{displaymath}
\begin{split}
\big|&I_{\mathcal{B}}\big|  \le  \|w \|_{X^{0,\frac12}} \\ & \times \sum_{j \ge 0}
\left(\sum_{N_1}  N_1^{-2} (N_1 2^{-j}N_1)^{-1}
\Bigr\|P_{N_1}\big(\partial_xP_{+hi}P_{N_1}\Big(\frac{\widehat{h}}{\langle \sigma \rangle^{\frac12}}\Big)^{\vee}P_+\partial_xP_{2^{-j}N_1}\tilde{u}\big)\Bigl\|_{L^2_{x,t}}^2\right)^{\frac12},
\end{split}
\end{displaymath}
where $\tilde{u}(x,t)=u(-x,-t)$.
Thus it follows from the Bernstein and H\"older inequalities that
\begin{equation} \label{bilincrit10}
\begin{split}
\big|I_{\mathcal{B}}\big| &\lesssim  \|w \|_{X^{0,\frac12}}
\sum_{j\ge0} \Big(\sum_{N_1} 2^{-j}
\bigl\|P_{N_1}\Big(\frac{\widehat{h}}{\langle \sigma \rangle^{\frac12}}\Big)^{\vee}\bigr\|_{L^4_{x,t}}^2\|P_{2^{-j}N_1}u\|_{L^4_{x,t}}^2\Big)^{\frac12}\\
&\lesssim \|w \|_{X^{0,\frac12}}
 \Big(\sum_{N_1}
\bigl\|P_{N_1}\Big(\frac{\widehat{h}}{\langle \sigma \rangle^{\frac12}}\Big)^{\vee}\bigr\|_{L^4_{x,t}}^2\Big)^{\frac12}\|u\|_{L^4_{x,t}}.
\end{split}
\end{equation}
In the second case, we  bound $\big|I_{\mathcal{B}}\big|$ as follows,
\begin{displaymath}
\begin{split}
\big|&I_{\mathcal{B}}\big|  \le  \|w \|_{X^{0,\frac12}} \\ & \times \sum_{j \ge 0}
\left(\sum_{N_1}  N_1^{-2} (2^{-j}N_1 N_1)^{-1}
\Bigr\|P_{N_1}\big(\partial_xP_{+hi}P_{2^{-j}N_1}\Big(\frac{\widehat{h}}{\langle \sigma \rangle^{\frac12}}\Big)^{\vee}P_+\partial_xP_{N_1}\tilde{u}\big)\Bigl\|_{L^2_{x,t}}^2\right)^{\frac12},
\end{split}
\end{displaymath}
so that
\begin{equation} \label{bilincrit11}
\begin{split}
\big|I_{\mathcal{B}}\big| &\lesssim  \|w \|_{X^{0,\frac12}}
\sum_{j\ge0} \Big(\sum_{N_1} 2^{-j}
\bigl\|P_{2^{-j}N_1}P_{+hi}\Big(\frac{\widehat{h}}{\langle \sigma \rangle^{\frac12}}\Big)^{\vee}\bigr\|_{L^4_{x,t}}^2\|P_{N_1}u\|_{L^4_{x,t}}^2\Big)^{\frac12}\\
&\lesssim \|w \|_{X^{0,\frac12}}\sum_{j \ge 0}2^{-\frac{j}2}
\Big(\sum_{N_1}
\bigl\|P_{2^{-j}N_1}P_{+hi}\Big(\frac{\widehat{h}}{\langle \sigma \rangle^{\frac12}}\Big)^{\vee}\bigr\|_{L^4_{x,t}}^2\Big)^{\frac12}
\|u\|_{L^4_{x,t}} \\ &
\lesssim \|w \|_{X^{0,\frac12}}
\Big(\sum_{N_1}
\bigl\|P_{N_1}\Big(\frac{\widehat{h}}{\langle \sigma \rangle^{\frac12}}\Big)^{\vee}\bigr\|_{L^4_{x,t}}^2\Big)^{\frac12}
\|u\|_{L^4_{x,t}}.
\end{split}
\end{equation}
In conclusion, we obtain gathering \eqref{bilincrit10}--\eqref{bilincrit11} and using estimate \eqref{bourgstrich.2} that
\begin{equation} \label{bilincrit12}
\big|I_{\mathcal{B}}\big| \le  \|h \|_{L^2_{x,t}}\|w\|_{X^{0,\frac12}}\|u\|_{L^4_{x,t}}.
\end{equation}

\noindent \textit{Estimate for $\big|I_{\mathcal{C}}\big|$.} First observe that
\begin{equation} \label{bilincrit13}
\big|I_{\mathcal{C}}\big| \lesssim \int_{\widetilde{\mathcal{C}}}\frac{|\xi|}
{\langle \sigma\rangle^{\frac12}}|\widehat{h} (\xi,\tau)||\xi_1|^{-1}|\widehat{w}(\xi_1,\tau_1)|
 \frac{|\xi_2|^2}{\langle\sigma_2\rangle}\frac{\langle\sigma_2\rangle}{|\xi_2|}|\widehat{u}(\xi_2,\tau_2)|d\nu,
\end{equation}
where
\begin{displaymath}
\widetilde{\mathcal{C}}=\big\{(\xi,\xi_1,\tau,\tau_1) \in \mathcal{D} \ |
\ (\xi,\xi_1,\tau,\tau_1) \in \bigcup_{N,N_2}\mathcal{C}_{N,N_2} \big\}.
\end{displaymath}
Since $|\sigma_2|>|\sigma|$ and $|\sigma_2|>|\sigma_1|$ in $\widetilde{\mathcal{C}}$, it follows from \eqref{bilincrit4} that $|\sigma_2| \gtrsim |\xi\xi_2|$. Then,
\begin{equation}\label{ed}
|\xi\xi_1^{-1}\xi_2^2\langle\sigma_2\rangle^{-1}| \lesssim 1
\end{equation}
 holds in $\widetilde{\mathcal{C}}$, so that
\begin{equation} \label{bilincrit14b}
\begin{split}
\big|I_{\mathcal{C}}\big| &\lesssim \int_{\widetilde{\mathcal{C}}}\frac{|\widehat{h} (\xi,\tau)|}
{\langle \sigma\rangle^{\frac12}}|\widehat{w}(\xi_1,\tau_1)|
\frac{\langle\sigma_2\rangle}{|\xi_2|}|\widehat{u}(\xi_2,\tau_2)|d\nu  \\
& \lesssim \bigl\|\Big(\frac{|\widehat{h}|}{\langle \sigma \rangle^{\frac12}}\Big)^{\vee}\bigr\|_{L^4_{x,t}}
\|(|\widehat{w}|)^{\vee}\|_{L^4_{x,t}}\|u\|_{X^{-1,1}} \\ &
\lesssim \|h\|_{L^2_{x,t}}\|w\|_{X^{\frac38}}\|u\|_{X^{-1,1}}
\end{split}
\end{equation}
is deduced by using H\"older's inequality and estimate \eqref{bourgstrich.2}.

Therefore, estimates \eqref{bilincrit5}, \eqref{bilincrit9}, \eqref{bilincrit12} and \eqref{bilincrit14b} imply estimate \eqref{bilincrit2}, which concludes the proof of estimate \eqref{bilincrit1}.

To prove estimate \eqref{bilincrit1b}, we also proceed by duality. Then it is sufficient to show that
\begin{equation} \label{bilincrit14}
\big|J\big| \lesssim
\big(\sum_{N}\|g_N\|_{L^2_{\xi}L^{\infty}_{\tau}}^2\big)^{\frac12}\|w\|_{X^{0,\frac12}}
\big(\|u\|_{L^2_{x,t}}+\|u\|_{L^4_{x,t}}+\|u\|_{X^{-1,1}}\big),
\end{equation}
where
\begin{displaymath}
J=\sum_N\int_{\mathcal{D}}\frac{\xi} {\langle \sigma\rangle}
g_N(\xi,\tau)\phi_N(\xi)\xi_1^{-1}\widehat{w}(\xi_1,\tau_1) \xi_2
\widehat{u}(\xi_2,\tau_2)d\nu,
\end{displaymath}
and $d\nu$ and $\mathcal{D}$ are defined in \eqref{bilincrit3bb} and
\eqref{bilincrit3bbb}. As in the case of $I$, we can also assume that $|\xi_2|\ge 1$. By using dyadic decompositions as in
\eqref{bilincrit3bbbb}, $J$ can be rewritten as
\begin{displaymath}
J=\sum_{N,N_1,N_2} J_{N,N_1,N_2},
\end{displaymath}
where
\begin{displaymath}
J_{N, N_1,N_2}:= \int_{\mathcal{D}}\frac{\xi} {\langle
\sigma\rangle}\phi_N(\xi)g_N
(\xi,\tau)\xi_1^{-1}\widehat{P_{N_1}w}(\xi_1,\tau_1) \xi_2
\widehat{P_{N_2}u }(\xi_2,\tau_2)d\nu,
\end{displaymath}
and the dyadic numbers $N$, $N_1$ and $N_2$ range from $1$ to $+\infty$.
Moreover, we will denote by $J^{\mathcal{A}_{N,N_2}}_{N, N_1,N_2}$, $J^{\mathcal{B}_{N,N_2}}_{N, N_1,N_2}$, $J^{\mathcal{C}_{N,N_2}}_{N, N_1,N_2}$ the restriction of $J_{N, N_1,N_2}$ to the regions $\mathcal{A}_{N,N_2}$, $\mathcal{B}_{N,N_2}$ and $\mathcal{C}_{N,N_2}$ defined in \eqref{bilincrit4}. Then, it follows that
\begin{equation} \label{bilincrit15}
\big|J\big| \le \big|J_{\mathcal{A}}\big|+\big|J_{\mathcal{B}}\big|+\big|J_{\mathcal{C}}\big|,
\end{equation}
where
$$
J_{\mathcal{A}}:=\sum_{N, N_1,N_2} J^{\mathcal{A}_{N,N_2}}_{N, N_1,N_2}, \; J_{\mathcal{B}}:=\sum_{N, N_1,N_2} J^{\mathcal{B}_{N,N_2}}_{N, N_1,N_2} \ \mbox{and } \ J_{\mathcal{C}}:=\sum_{N, N_1,N_2} J^{\mathcal{C}_{N,N_2}}_{N, N_1,N_2}\; ,
 $$
so that  it suffices to estimate $\big|J_{\mathcal{A}}\big|$, $\big|J_{\mathcal{B}}\big|$ and $\big|J_{\mathcal{C}}\big|$.

\noindent \textit{Estimate for $\big|J_{\mathcal{A}}\big|$.} To
estimate $\big|J_{\mathcal{A}}\big|$, we divide each region
$\mathcal{A}_{N,N_2}$ into disjoint subregions
\begin{displaymath}
\mathcal{A}_{N,N_2}^q=\big\{(\xi,\xi_1,\tau,\tau_1) \in
\mathcal{A}_{N,N_2} \ | \ 2^{q-3}NN_2 \le |\sigma| < 2^{q-2}NN_2
\big\},
\end{displaymath}
for $q \in \mathbb Z_+$. Thus if $J^{\mathcal{A}_{N,N_2}^q}_{N,
N_1,N_2}$ denote the restriction of $J^{\mathcal{A}_{N,N_2}}_{N,
N_1,N_2}$ to each of these regions, we have that
$J_{\mathcal{A}}=\sum_{q \ge 0}\sum_{N, N_1,N_2}
J^{\mathcal{A}_{N,N_2}^q}_{N, N_1,N_2}$. In the case of high-low
interactions, we deduce by using the Plancherel identity
Cauchy-Schwarz and Minkowski inequalities that
\begin{displaymath}
\begin{split}
|J_{\mathcal{A}}|\le \sum_{q \ge 0}\sum_{N_1}\sum_{N_2 \le N_1}&\|g_{N_1}
\chi_{\{|\sigma|\sim2^qN_1N_2\}}\|_{L^2_{\xi,\tau}} \\
&\times(2^qN_1N_2)^{-1}N_1
\Bigr\|\partial_x^{-1}P_{N_1}wP_-\partial_xP_{N_2}u\Bigl\|_{L^2_{x,t}}.
\end{split}
\end{displaymath}
Moreover, we get from H\"older's inequality
\begin{displaymath}
\|g_{N_1} \chi_{\{|\sigma|\sim 2^qN_1N_2\}}\|_{L^2_{\xi,\tau}} \lesssim
(2^qNN_2)^{\frac12}\|g_{N_1} \|_{L^2_{\xi}L^{\infty}_{\tau}},
\end{displaymath}
so that, the Cauchy-Schwarz inequality yields
\begin{equation} \label{bilincrit16}
\begin{split}
|J_{\mathcal{A}}| &\lesssim \sum_{N_1}\sum_{N_2 \le
N_1}(N_2N_1^{-1})^{\frac12}\|g_{N_1}
\|_{L^2_{\xi}L^{\infty}_{\tau}}\|P_{N_1}w\|_{L^4_{x,t}}
\|P_{N_2}u\|_{L^4_{x,t}} \\
& \lesssim \|u\|_{L^4_{x,t}}\sum_{N_1}
\|g_{N_1}\|_{L^2_{\xi}L^{\infty}_{\tau}}\|P_{N_1}w\|_{L^4_{x,t}}
\\
&\lesssim \big(\sum_{N_1}\|g_{N_1}\|_{L^2_{\xi}L^{\infty}_{\tau}}^2\big)^{\frac12}
\|w\|_{\tilde{L}^4_{x,t}}\|u\|_{L^4_{x,t}}.
\end{split}
\end{equation}

In the high-high interaction case, it follows from the Minkowski and
Cauchy-Schwarz inequalities that
\begin{displaymath}
\begin{split}
|J_{\mathcal{A}}|\le \sum_{q \ge 0}\sum_{N_1}\sum_{N \le N_1}&\|g_N
\chi_{\{|\sigma|\sim2^qNN_1\}}\|_{L^2_{\xi,\tau}} \\
&\times(2^qNN_1)^{-1}N
\Bigr\|\partial_x^{-1}P_{N_1}wP_-\partial_xP_{N_1}u\Bigl\|_{L^2_{x,t}}.
\end{split}
\end{displaymath}
Moreover, we deduce from H\"older's inequality that
\begin{displaymath}
\|g_N \chi_{\{|\sigma|\sim2^qNN_1\}}\|_{L^2_{\xi,\tau}} \lesssim
(2^qNN_1)^{\frac12}\|g_N \|_{L^2_{\xi}L^{\infty}_{\tau}}.
\end{displaymath}
Then, the Cauchy-Schwarz inequality implies that
\begin{equation} \label{bilincrit17}
\begin{split}
|J_{\mathcal{A}}| &\lesssim \sum_{j \ge 0}\sum_{N_1}(N_1^{-1}2^{-j}N_1)^{\frac12}\|g_{2^{-j}N_1}
\|_{L^2_{\xi}L^{\infty}_{\tau}}\|P_{N_1}w\|_{L^4_{x,t}}
\|P_{N_1}u\|_{L^4_{x,t}}
\\
& \lesssim \sum_{j \ge 0}2^{-\frac{j}{2}}
\big(\sum_{N_1}\|g_{2^{-j}N_1}\|_{L^2_{\xi}L^{\infty}_{\tau}}^2\big)^{\frac12}
\big(\sum_{N_1}\|P_{N_1}w\|_{L^4_{x,t}}^2\big)^{\frac12}\|u\|_{L^4_{x,t}} \\
& \lesssim
\big(\sum_{N_1}\|g_{N_1}\|_{L^2_{\xi}L^{\infty}_{\tau}}^2\big)^{\frac12}
\|w\|_{\tilde{L}^4_{x,t}}\|u\|_{L^4_{x,t}}.
\end{split}
\end{equation}
Then estimates \eqref{bourgstrich.2}, \eqref{bilincrit16} and
\eqref{bilincrit17} yield
\begin{equation} \label{bilincrit19}
|J_{\mathcal{A}}| \lesssim
\big(\sum_N\|g_N
\|_{L^2_{\xi}L^{\infty}_{\tau}}^2\big)^{\frac12}
\|w\|_{X^{0,\frac38}}\|u\|_{L^4_{x,t}}.
\end{equation}

\noindent \textit{Estimate for $\big|J_{\mathcal{B}}\big|$ and $\big|J_{\mathcal{C}}\big|$.}
Arguing as in the proof of \eqref{bilincrit1}, it is deduced that
\begin{displaymath}
\big|J_{\mathcal{B}}\big|+\big|J_{\mathcal{C}}\big|
\lesssim
\Big(\bigl\|\Big(\frac{g}{\langle \sigma \rangle}\Big)^{\vee}\bigr\|_{\widetilde{L}^4_{x,t}}
+\bigl\|\Big(\frac{|g|}{\langle \sigma \rangle}\Big)^{\vee}\bigr\|_{\widetilde{L}^4_{x,t}}\Big)
\|w\|_{X^{0,\frac12}}
\big(\|u\|_{L^4_{x,t}}+\|u\|_{X^{-1,1}}\big),
\end{displaymath}
where $g=\sum_N\phi_Ng_N$.
Moreover, estimate \eqref{bourgstrich.2} and H\"older's inequality imply \begin{displaymath}
\begin{split}
\bigl\|\Big(\frac{g}{\langle \sigma \rangle}\Big)^{\vee}\bigr\|_{\widetilde{L}^4_{x,t}}
+\bigl\|\Big(\frac{|g|}{\langle \sigma \rangle}\Big)^{\vee}\bigr\|_{\widetilde{L}^4_{x,t}}
&\lesssim \|\langle \sigma \rangle^{-\frac58}\sum_N\phi_Ng_N\|_{L^2_{\xi,\tau}} \\ &
\lesssim \big(\sum_N\|\langle \sigma \rangle^{-\frac58}g_N\|_{L^2_{\xi,\tau}}^2\big)^{\frac12}\\&
\lesssim \big(\sum_N\|g_N\|_{L^2_{\xi}L^{\infty}_{\tau}}^2\big)^{\frac12},
\end{split}
\end{displaymath}
so that
\begin{equation} \label{bilincrit20}
\big|J_{\mathcal{B}}\big|+\big|J_{\mathcal{C}}\big|
\lesssim \big(\sum_N\|g_N\|_{L^2_{\xi}L^{\infty}_{\tau}}^2\big)^{\frac12}
\|w\|_{X^{0,\frac12}}
\big(\|u\|_{L^4_{x,t}}+\|u\|_{X^{-1,1}}\big).
\end{equation}

Finally \eqref{bilincrit15}, \eqref{bilincrit19} and
\eqref{bilincrit20} imply \eqref{bilincrit14}, which concludes the
proof of estimate \eqref{bilincrit1b}.
\end{proof}

\begin{lemma} \label{lemma3}
Let $0<T\le 1$, $s \ge 0$ , $u_1$, $u_2\in L^\infty(\R; L^2(\R)) \cap L^4(\R^2)  $
supported in the time interval $[-2T,2T]$, and $F_1$, $F_2$ be some spatial primitive of respectively $ u_1 $ and $ u_2 $. Then
\begin{equation} \label{lemma3.1}
\begin{split}
&\big\|\partial_xP_{+hi}\big(P_{lo}e^{-\frac{i}{2}F_1}
P_-\partial_xu_1\big)\big\|_{\tilde{Z}^{s,-1}} \\&
+\big\|\partial_xP_{+hi}\big(P_{lo}e^{-\frac{i}{2}F_1}
P_-\partial_xu_1\big)\big\|_{X^{s,-\frac12}}
\lesssim \|u_1\|_{L^4_{x,t}}^2,
\end{split}
\end{equation}
and
\begin{equation} \label{lemma3.1b}
\begin{split}
&\big\|\partial_xP_{+hi}\big(P_{lo}\big(e^{-\frac{i}{2}F_1}-e^{-\frac{i}{2}F_2}\big)
P_-\partial_xu_2\big)\big\|_{\tilde{Z}^{s,-1}} \\&
+\big\|\partial_xP_{+hi}\big(P_{lo}\big(e^{-\frac{i}{2}F_1}-e^{-\frac{i}{2}F_2}\big)
P_-\partial_xu_2\big)\big\|_{X^{s,-\frac12}} \\ &
\quad \quad \quad \lesssim \Big(\|u_1-u_2\|_{L^{\infty}_tL^2_x}+\|e^{-\frac{i}{2}F_1}
-e^{-\frac{i}{2}F_2}\|_{L^{\infty}_{x,t}} \|u_2\|_{L^{\infty}_tL^2_x}\Big)\|u_2\|_{L^4_{x,t}}.
\end{split}
\end{equation}
\end{lemma}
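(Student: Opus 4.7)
My plan hinges on a single Fourier-support observation. Since the multiplier of $P_{lo}$ is supported in $|\xi_1|\le 2$, $P_-$ restricts to $\xi_2\le 0$, and $P_{+hi}$ to $\xi\ge 1$, the convolution in Fourier variables defining $\partial_xP_{+hi}(P_{lo}e^{-\frac{i}{2}F}\,P_-\partial_x u)$ automatically forces $\xi\in[1,2]$, $\xi_1\in[1,2]$ and $\xi_2\in[-1,0]$. In particular this quantity is Fourier-supported in a single bounded interval in $\xi$, so $\partial_x^k$ acts as a bounded multiplier on it, $\langle\xi\rangle^s\sim 1$, only finitely many dyadic blocks contribute to the $\widetilde Z$-norm, and Cauchy--Schwarz in $\tau$ together with the finiteness of $\int\langle\sigma\rangle^{-2}d\tau$ gives
\[
\|\cdot\|_{\widetilde Z^{s,-1}}+\|\cdot\|_{X^{s,-\frac12}}\lesssim \|\cdot\|_{L^2_{x,t}}.
\]
The whole problem therefore reduces to two $L^2_{x,t}$ bilinear estimates.

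To prove \eqref{lemma3.1} at the $L^2_{x,t}$-level, I would first use the frequency-support analysis to replace $P_{lo}(e^{-\frac{i}{2}F_1})$ by its restriction $\widetilde P_1(e^{-\frac{i}{2}F_1})$ to a smooth cutoff of $[1,2]$, and $P_-\partial_x u_1$ by its restriction $\widetilde P_2(P_-\partial_x u_1)$ to $[-1,0]$, and then apply H\"older in $L^4_{x,t}\cdot L^4_{x,t}$. The second factor satisfies $\|\widetilde P_2 P_-\partial_x u_1\|_{L^4_{x,t}}\lesssim\|u_1\|_{L^4_{x,t}}$, since $\widetilde P_2 P_-\partial_x$ is a Fourier multiplier with bounded symbol and bounded compact support, hence $L^4$-bounded. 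For the first factor, the localization to $|\xi_1|\sim 1$ gives $\|\widetilde P_1(e^{-\frac{i}{2}F_1})\|_{L^4_{x,t}}\sim \|\partial_x\widetilde P_1(e^{-\frac{i}{2}F_1})\|_{L^4_{x,t}}$, and Tao's identity $\partial_x e^{-\frac{i}{2}F_1}=-\tfrac{i}{2}u_1 e^{-\frac{i}{2}F_1}$ together with $L^4$-boundedness of $\widetilde P_1$ and $|e^{-\frac{i}{2}F_1}|=1$ yields $\lesssim\|u_1\|_{L^4_{x,t}}$. Multiplying produces the claimed $\|u_1\|_{L^4_{x,t}}^2$.

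For the difference estimate \eqref{lemma3.1b}, the same scheme will work with Tao's identity replaced by
\[
\partial_x\bigl(e^{-\frac{i}{2}F_1}-e^{-\frac{i}{2}F_2}\bigr)=-\tfrac{i}{2}(u_1-u_2)\,e^{-\frac{i}{2}F_1}-\tfrac{i}{2}u_2\bigl(e^{-\frac{i}{2}F_1}-e^{-\frac{i}{2}F_2}\bigr).
\]
Combining this identity with Bernstein on the frequency-localized first factor gives
\[
\|\widetilde P_1\bigl(e^{-\frac{i}{2}F_1}-e^{-\frac{i}{2}F_2}\bigr)\|_{L^\infty_tL^4_x}\lesssim C_\ast,
\]
where $C_\ast=\|u_1-u_2\|_{L^\infty_tL^2_x}+\|e^{-\frac{i}{2}F_1}-e^{-\frac{i}{2}F_2}\|_{L^\infty_{x,t}}\|u_2\|_{L^\infty_tL^2_x}$ is the parenthesized expression in \eqref{lemma3.1b}. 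H\"older with $L^\infty_tL^4_x\cdot L^2_tL^4_x\hookrightarrow L^2_{x,t}$ then pairs this with $\|\widetilde P_2 P_-\partial_x u_2\|_{L^2_tL^4_x}$, which by H\"older in time (using $\mathrm{supp}\,u_2\subset[-2T,2T]$ and $T\le 1$) is $\lesssim T^{1/4}\|u_2\|_{L^4_{x,t}}\lesssim\|u_2\|_{L^4_{x,t}}$, closing the estimate.

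The main obstacle is essentially psychological: it is recognizing at the outset that the triple frequency localization $P_{+hi}\,P_{lo}\,P_-$ collapses the output into a single bounded-frequency block. Once that reduction has been made, no genuine Bourgain-type bilinear estimate (in the spirit of Proposition \ref{bilincrit}) is needed, and everything follows from H\"older, Bernstein, and Tao's identity; this is what justifies labelling such contributions as ``negligible'' in the sequel.
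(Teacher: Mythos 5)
Your proof is correct and rests on the same two pillars as the paper's argument: the frequency collapse forced by the combination $P_{+hi}P_{lo}P_-$ (which confines all three frequencies to fixed bounded intervals) and the identity $\partial_x e^{-\frac{i}{2}F}=-\frac{i}{2}u\,e^{-\frac{i}{2}F}$ with $|e^{-\frac{i}{2}F}|=1$, followed by H\"older (and, for \eqref{lemma3.1b}, the same splitting of $\partial_x(e^{-\frac{i}{2}F_1}-e^{-\frac{i}{2}F_2})$ that underlies \eqref{lemma1.6}). The only cosmetic differences are that you reduce the $\tilde{Z}^{s,-1}$ and $X^{s,-\frac12}$ norms directly to $L^2_{x,t}$ by Cauchy--Schwarz in $\tau$ on the compact frequency support, where the paper instead uses the embedding $\|f\|_{X^{s,-\frac12+\epsilon}}\lesssim\|f\|_{L^{1+\epsilon'}_tH^s_x}$ coming from Sobolev in time, and that you transfer the derivative between the two bounded-frequency factors by hand rather than quoting the localized Leibniz rule of Lemma \ref{lemma2}.
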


\begin{proof}
We deduce from the Cauchy-Schwarz inequality, the Sobolev embedding
$\|f\|_{H^{-\frac12+\epsilon}_t} \lesssim \|f\|_{L^{1+\epsilon'}_t}$
with $1+\epsilon'=\frac{1}{1-\epsilon}$, and the Minkowski
inequality that
\begin{equation} \label{lemma3.2}
\begin{split}
\|f\|_{\tilde{Z}^{s,-1}}+\|f\|_{X^{s,-\frac12}} &\lesssim
\|f\|_{X^{s,-\frac12+\epsilon}}
=\Big\|\big\|\big(J^s_xU(-t)f\big)^{\wedge_x}(\xi)\big\|_{H^{-\frac12+\epsilon}_t}\Big\|_{L^2_{\xi}}
 \\ & \lesssim
\Big\|\|\big(J^s_xU(-t)f\big)^{\wedge_x}(\xi)\big\|_{L^{1+\epsilon'}_t}
\Big\|_{L^2_{\xi}} \lesssim \|f\|_{L^{1+\epsilon'}_tH^s_x}.
\end{split}
\end{equation}
On the other hand, it follows from the frequency localization that
\begin{displaymath}
\partial_xP_{+hi}\big(P_{lo}e^{-\frac{i}{2}F}P_-\partial_xu\big)
=\partial_xP_{+LO}\big(P_{lo}e^{-\frac{i}{2}F}P_{-LO}\partial_xu\big).
\end{displaymath}
Therefore, by using \eqref{lemma3.2}, Bernstein's inequalities and
estimate \eqref{lemma2.1}, we can bound the left-hand side of
\eqref{lemma3.1} by
\begin{equation}\label{zq}
\big\|P_{+LO}\Big(P_{lo}e^{-\frac{i}{2}F}P_{-LO}\partial_xu\Big)\big\|_{L^{1+\epsilon'}_tL^2_x}
\lesssim T^{\gamma}
\|\partial_xe^{-\frac{i}{2}F}\|_{L^4_{x,t}}\|u\|_{L^4_{x,t}},
\end{equation}
with $\frac{1}{\gamma}=\frac12-\epsilon'$, which concludes the proof
of estimate \eqref{lemma3.1} recalling that $\partial_xF=u$ and $0 < T \le
1$. Estimate \eqref{lemma3.1b} can be proved exactly as above recalling \eqref{lemma1.6}.
\end{proof}

A proof of Proposition \ref{apriori w} is now in sight.
\begin{proof}[Proof of Proposition \ref{apriori w}]
Let $0 \le s \le \frac12$, $0<T\le 1$ and let $\tilde{u}$ and
$\tilde{w}$ be extensions of $u$ and $w$ such that
$\|\tilde{u}\|_{X^{-1,1}} \le 2\|u\|_{X^{-1,1}_T}$ and
$\|\tilde{w}\|_{X^{s,1/2}} \le 2\|w\|_{X^{s,1/2}_T}$. By the Duhamel principle,
the integral formulation associated to \eqref{gauge2} reads
\begin{displaymath}
\begin{split}
w(t)&=\eta(t)w(0)-\eta(t)\int_0^tU(t-t')\partial_xP_{+hi}\big(\eta_T\partial_x^{-1}\tilde{w}P_-\big(\eta_T\partial_xu\big)\big)(t')dt'
\\ & \quad
-\eta(t)\int_0^t
\partial_xP_{+hi}\big(P_{lo}\big(\eta_Te^{-\frac{i}{2}\tilde{F}}\big)P_-\big(\eta_T\partial_x\tilde{u}\big)\big)(t')dt',
\end{split}
\end{displaymath}
for $0<t \le T\le 1$. Therefore, we deduce gathering estimates
\eqref{prop1.1.2}, \eqref{prop1.2.2}, \eqref{bilincrit1},
\eqref{bilincrit1b} and \eqref{lemma3.1} that
\begin{displaymath}
\|w\|_{Y^s_T} \lesssim \|w(0)\|_{H^s}+\|u\|_{L^4_{x,T}}^2
+\|w\|_{X^{s1/2}_T}\|\big(\|u\|_{L^{\infty}_TL^2_x}+\|u\|_{L^4_{x,T}}+\|u\|_{X^{-1,1}_T}\big).
\end{displaymath}
This concludes the proof of estimate \eqref{apriori w.1}, since
\begin{equation}\label{estimatew0}
\|w(0)\|_{H^s} \lesssim
\big\|J^s_x\big(e^{-\frac{i}{2}F(\cdot,0)}u_0\big)\big\|_{L^2}
\lesssim \big(1+\|u_0\|_{L^2}\big)\|u_0\|_{H^s},
\end{equation}
follows from estimate \eqref{lemma1.1} and the fact that $0 \le s
\le \frac12$.
\end{proof}

\section{Proof of Theorem \ref{theo1}}
First it is worth noticing that we can always assume that we deal with data that have  small $ L^2(\R) $-norm.
Indeed,  if $u$ is a solution to the IVP \eqref{BO} on the time
interval $[0,T]$ then,  for every $0<\lambda<\infty $,
$u_{\lambda}(x,t)=\lambda u(\lambda x,\lambda^2t)$ is also a
solution to the equation in \eqref{BO} on the time interval
$[0,\lambda^{-2}T]$ with initial data $u_{0,\lambda}=\lambda
u_{0}(\lambda \cdot)$.  For  $\varepsilon>0 $ let us denote by $ B_\varepsilon $ the ball of $ L^2(\R) ,$ centered at the origin with radius $ \varepsilon $. Since $\|u_{\lambda}(\cdot,0)\|_{L^2} =\lambda^{\frac12}\|u_0\|_{L^2}$,  we see that we can  force $u_{0,\lambda}$  to belong to $ B_\epsilon$ by
choosing $\lambda \sim \min( \varepsilon^2\|u_0\|_{L^2}^{-2},1) $.
Therefore the existence and uniqueness of a solution of \eqref{BO} on the time interval $ [0,1] $ for small $ L^2(\R) $-initial data will ensure
  the existence of a unique solution $u$ to \eqref{BO} for arbitrary large $L^2(\R) $-initial data  on the time
interval $T\sim \lambda^2 \sim \min( \|u_0\|_{L^2}^{-4},1)$. Using the conservation of the $ L^2(\R) $-norm,  this will lead
to global well-posedness in $ L^2(\R) $.

\subsection{Uniform bound for small initial data} \label{Exist}
First, we begin by deriving  \textit{a priori} estimates on
smooth solutions associated to  initial data $u_0\in H^s(\R)$ that is small in $L^2(\R) $ . It is known from the
classical well-posedness theory (cf. \cite{Io}) that such an initial data gives rise to a global solution $u \in C(\mathbb R; H^{\infty}(\mathbb R))$ to the
Cauchy problem \eqref{BO}. Setting for $ 0<T\le 1 $,
\begin{equation} \label{theo1.2}
N_T^s(u):=\max\Bigl(\|u\|_{L^{\infty}_T H^s_x},\,\|J_x^s u\|_{L^4_{x,T}},
\|w\|_{X^{s,\frac12}_T} \Bigr) ,
 \end{equation}
 it follows from the smoothness of $ u$ that
$ T\mapsto N_T^s(u) $ is continuous and non decreasing on $ \mathbb R_+^* $. Moreover,  from \eqref{gauge2}, the linear estimate \eqref{prop1.2.2}, \eqref{estimatew0}
and \eqref{apriori u.1} we infer that
$ \lim_{T\to 0+} N_{T}^s(u) \lesssim (1+\|u_0\|_{L^2}) \|u_0\|_{H^s}$ . On the other hand, combining \eqref{apriori u.1}-\eqref{apriori u.2}
and \eqref{apriori w.1}  and the conservation of the $L^2 $-norm we infer that
$$
N_T ^0(u)\lesssim (1+\|u_0\|_{L^2}) \|u_0\|_{L^2}  +  (N_T^0(u))^2+ (N_T^0(u))^3 \, .
$$
By continuity, this  ensures that there exists $ \varepsilon_0>0 $ and $ C_0>0 $ such that $ N_1^0(u) \le  C_0 \varepsilon $ provided
$ \|u_0\|_{L^2}\le\varepsilon\le \varepsilon_0$. Finally, using again \eqref{apriori u.1}-\eqref{apriori u.2} and  \eqref{apriori w.1}, this leads to
$ N^s_1(u) \lesssim \| u_0\|_{H^s} $ provided  $ \|u_0\|_{L^2}\le\varepsilon\le \varepsilon_0$.\\

\subsection{ Lipschitz bound for initial data having the same low frequency part} \label{Uni}
To  prove the uniqueness as well as the continuity of the solution we will derive a Lipschitz bound on the solution map on some affine subspaces of  $ H^s(\R) $ with values in
  $L^\infty_T  H^s(\R)  $. We know from \cite{KT} that such Lipschitz bound does not exist in general in $ H^s(\R) $. Here we will restrict ourself to solutions emanating from initial data having the same low frequency part. This is clearly sufficient to get uniqueness and it will turn out to  be sufficient to get the continuity of the solution as well as the continuity of the flow-map.\\
Let $\varphi_1$, $\varphi_2  \in B_{\epsilon}\cap H^s(\R) $, $s\ge 0 $,  such that $ P_{LO} \varphi_1= P_{LO} \varphi_2 $ and let $u_1$, $u_2$ be two
solutions to \eqref{BO} emanating  respectively from  $\varphi_1$, and $\varphi_2$ that
satisfy \eqref{theo1.1}  on the time interval $[0,T]$, $0<T<1$.  We also assume that the  primitives $ F_1:=F[u_1] $
 and $ F_2:=F[u_2] $ of respectively $ u_1 $ and $ u_2 $ are  such that the associated  gauge functions $W_1$, $w_1$,
respectively $W_2$, $w_2$,  constructed  in
Subsection \ref{GT}, satisfy \eqref{theo1.1b}. Finally, we  assume that
\begin{equation}\label{fd}
  N^0_T(u_i) \le C_0 \varepsilon\le C_0 \varepsilon_0 .
  \end{equation}
 First, by construction, we observe that since $ F(x)-F(y)=\int_x^y u(z) \, dz $, it holds $P_{LO}\int_y^xudz=P_{LO}\Bigl( F(x)-F(y) \Bigr)=
 P_{LO}F(x)-F(y)$. On the other hand, since $ P_{LO} $ and $\partial_x $ do commute, we have $\partial_xP_{LO}F=P_{LO}u$ and, by integrating, $\int_y^xP_{LO}udz=P_{LO}F(x)-P_{LO}F(y)$.  Gathering these two identities, we get
 $$
 \int_y^xP_{LO}udz-P_{LO}\int_y^xudz=F(y)-P_{LO}F(y)=P_{HI}F(y),
 $$
 which leads to
 $$
 P_{lo}\int_y^xudz=P_{lo}\int_y^xP_{LO}udz.
 $$
 We thus infer that
 \begin{eqnarray}
 P_{lo}(F_1-F_2)(x,0) &
= & \int_{\mathbb R}\psi(y)P_{lo}\int_y^x(u_1-u_2)(z,0)dzdy \nonumber \\
&=& \int_{\mathbb R}\psi(y)P_{lo}\int_y^xP_{LO}(\varphi_1(z)-\varphi_2(z))(z,0)dzdy=0.  \label{Plo}
\end{eqnarray}
Then, we set $v=u_1-u_2$, $Z=W_1-W_2$ and $z=w_1-w_2$. Obviously, $z$
satisfies
\begin{displaymath} \label{theo1.7}
\begin{split}
\partial_tz-i\partial_x^2z=&-\partial_xP_{+hi}\big(W_1P_-\partial_xv \big)
-\partial_xP_{+hi}\big(ZP_-\partial_xu_2 \big) \\&
-\partial_xP_{+hi}\big(P_{lo}e^{-\frac{i}{2}F_1}P_-\partial_xv \big)
-\partial_xP_{+hi}\big(P_{lo}\big(e^{-\frac{i}{2}F_1}-e^{-\frac{i}{2}F_2}
\big)P_-\partial_xu_2 \big).
\end{split}
\end{displaymath}
Thus, we deduce gathering estimates \eqref{prop1.2.2},
\eqref{bilincrit1}, \eqref{bilincrit1b}, \eqref{lemma3.1} and
\eqref{lemma3.1b} that
\begin{displaymath}
\begin{split}
\|z\|_{Y^s_1} &\lesssim  \|z(0)\|_{H^s}+
\|w_1\|_{X^{s,1/2}_1}\big(\|v\|_{X_1^{-1,1}}+\|v\|_{L^4_{x,1}}+\|v\|_{L^{\infty}_1L^2_x}\big)+\|v\|_{L^4_{x,1}}^2
\\ & \quad
+\|z\|_{X^{s,1/2}_1}\big(\|u_2\|_{X_1^{-1,1}}+\|u_2\|_{L^4_{x,1}}+\|u_2\|_{L^{\infty}_1L^2_x}\big)
\\ & \quad
+\big(\|v\|_{L^{\infty}_1L^2_x}+\|e^{-\frac{i}{2}F_1}-e^{-\frac{i}{2}F_2}\|_{L^{\infty}_{x,1}}\big)\|u_2\|_{L^4_{x,1}},
\end{split}
\end{displaymath}
which implies recalling \eqref{theo1.2} and \eqref{fd}  that
\begin{equation} \label{theo1.8}
\|z\|_{Y^s_1} \lesssim \|z(0)\|_{H^s}+\varepsilon
\big(\|v\|_{X_1^{-1,1}}+\|v\|_{L^4_{x,1}}+\|v\|_{L^{\infty}_1L^2_x}\big)+\varepsilon
\|e^{-\frac{i}{2}F_1}-e^{-\frac{i}{2}F_2}\|_{L^{\infty}_{x,1}}.
\end{equation}
where, by the mean-value theorem,
\begin{eqnarray*}
\|z(0)\|_{H^{s}} & \lesssim &
\|\varphi_1-\varphi_2\|_{H^{s}}\Bigl(1+\|\varphi_1\|_{H^{s}}
 +\|\varphi_2\|_{L^2}\Bigr)\nonumber\\
 & & +\|e^{-iF_1(0)/2}-e^{-iF_2(0)/2}\|_{L^\infty} \|\varphi_1\|_{H^{s}}
 (1+\|\varphi_1\|_{L^2}) \nonumber \\
  & \lesssim & \|\varphi_1-\varphi_2\|_{H^{s}}+ \| F_1(0)-F_2(0)\|_{L^\infty}.
\label{o5}
\end{eqnarray*}
On the other hand, the equation for $v=u_1-u_2$ reads
\begin{displaymath}
\partial_tv+\mathcal{H}\partial_x^2v=\frac12\partial_x\big((u_1+u_2)v\big),
\end{displaymath}
so that it is deduced from \eqref{apriori u.1bbb}, \eqref{theo1.2}
and the fractional Leibniz rule that
\begin{equation} \label{theo1.9}
\|v\|_{X^{-1,1}_1} \lesssim
\|\partial_tv+\mathcal{H}\partial_x^2v\|_{L^2_1H^{-1}_x}+\|v\|_{L^{\infty}_T L^2_x}
\lesssim \varepsilon\|v\|_{L^4_{x,1}}+\|v\|_{L^{\infty}_1L^2_x}.
\end{equation}

Next, proceeding as in \eqref{gauge3}, we infer that
\begin{displaymath}
\begin{split}
P_{+HI}v &= 2iP_{+HI}\big( e^{\frac{i}{2}F_1}z\big)+ 2iP_{+HI}\big(
(e^{\frac{i}{2}F_1}-e^{\frac{i}{2}F_2})w_2\big) \\ & \quad
+2iP_{+HI}\big(
P_{+hi}e^{\frac{i}{2}F_1}\partial_xP_{+lo}(e^{-\frac{i}{2}F_1}-e^{-\frac{i}{2}F_2})\big)
\\ & \quad +2iP_{+HI}\big(
P_{+hi}(e^{\frac{i}{2}F_1}-e^{\frac{i}{2}F_2})\partial_xP_{+lo}e^{-\frac{i}{2}F_2}\big)
\\ & \quad
+2iP_{+HI}\big(
P_{+HI}e^{\frac{i}{2}F_1}\partial_xP_{-}(e^{-\frac{i}{2}F_1}-e^{-\frac{i}{2}F_2})\big)
\\ & \quad +2iP_{+HI}\big(
P_{+HI}(e^{\frac{i}{2}F_1}-e^{\frac{i}{2}F_2})\partial_xP_{-}e^{-\frac{i}{2}F_2}\big).
\end{split}
\end{displaymath}
Thus, we deduce using estimates \eqref{lemma1.1b}, \eqref{lemma1.6} and arguing as in the
proof of Proposition \ref{apriori u} that
\begin{displaymath} \label{theo1.10}
\begin{split}
\|J^s_xv\|_{L^{p}_1L^q_x} &\lesssim
\big(\|u_1\|_{L^{\infty}_1L^2_x}+\|u_2\|_{L^{\infty}_1L^2_x}\big)\|v\|_{L^{\infty}_1L^2_x}
+(1+\|u_1\|_{L^{\infty}_1L^2_x})\|z\|_{Y^s_1} \\ & \quad
+\big(\|v\|_{L^{\infty}_1L^2_x}+\|e^{\frac{i}{2}F_1}-e^{\frac{i}{2}F_2}\|_{L^{\infty}_{x,1}}
(1+\|u_1\|_{L^{\infty}_1L^2_x}) \big)\|w_2\|_{Y^s_1}\\ & \quad
+\|u_1\|_{L^{\infty}_1L^2_x}\big(\|v\|_{L^{\infty}_1L^2_x}
+\|e^{-\frac{i}{2}F_1}-e^{-\frac{i}{2}F_2}\|_{L^{\infty}_{x,1}}\|u_1\|_{L^{\infty}_1L^2_x}\big)
\\ & \quad
+\|u_2\|_{L^{\infty}_1L^2_x}\big(\|v\|_{L^{\infty}_1L^2_x}
+\|e^{\frac{i}{2}F_1}-e^{\frac{i}{2}F_2}\|_{L^{\infty}_{x,1}}\|u_1\|_{L^{\infty}_1L^2_x}\big),
\end{split}
\end{displaymath}
for $(p,q)=(\infty,2)$ or $(p,q)=(4,4)$, which implies recalling \eqref{fd}  that
\begin{equation} \label{theo1.10}
\|J^s_xv\|_{L^{\infty}_1L^2_x}+\|J^s_xv\|_{L^4_{x,1}} \lesssim \|z\|_{Y^s_1}
+\varepsilon\|e^{-\frac{i}{2}F_1}-e^{-\frac{i}{2}F_2}\|_{L^{\infty}_{x,1}}
+\varepsilon\|e^{\frac{i}{2}F_1}-e^{\frac{i}{2}F_2}\|_{L^{\infty}_{x,1}}.
\end{equation}

Finally, we use the mean value theorem to get the bound
\begin{equation} \label{theo1.11}
\|e^{\pm\frac{i}{2}F_1}-e^{\pm\frac{i}{2}F_2}\|_{L^{\infty}_{x,1}}
\lesssim \|F_1-F_2\|_{L^{\infty}_{x,1}}.
\end{equation}
The following crucial lemma gives an estimate for the right-hand side of \eqref{theo1.11}.
\begin{lemma} \label{lemma4}
It holds that
\begin{equation} \label{lemma4.1a}
\|F_1(0)-F_2(0)\|_{L^{\infty}} \lesssim \|\varphi_1-\varphi_2\|_{L^{2}}.
\end{equation}
and
\begin{equation} \label{lemma4.1}
\|F_1-F_2\|_{L^{\infty}_{x,1}} \lesssim \|v\|_{L^{\infty}_1L^2_x}.
\end{equation}
\end{lemma}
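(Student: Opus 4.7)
The plan is to split $F_1-F_2$ into its low- and high-frequency parts and estimate each in $L^\infty_x$ uniformly in $t\in[0,1]$. Throughout, the identity $\partial_x(F_1-F_2)=v$ (valid for all $t$ by the construction in Subsection \ref{GT}) is the starting point.

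\textbf{High-frequency part.} Since $\partial_x(F_1-F_2)=v$, on the spatial Fourier side the projection $\phi_{hi}$ is supported in $\{|\xi|\gtrsim 1\}$, so
$$\widehat{P_{hi}(F_1-F_2)}(\xi,t)=\frac{\phi_{hi}(\xi)}{i\xi}\,\widehat{v}(\xi,t).$$
The Hausdorff--Young and Cauchy--Schwarz inequalities then give
$$\|P_{hi}(F_1-F_2)(\cdot,t)\|_{L^\infty_x}\le\bigl\|\widehat{P_{hi}(F_1-F_2)}(\cdot,t)\bigr\|_{L^1_\xi}\lesssim\Bigl(\int_{|\xi|\gtrsim 1}\xi^{-2}\,d\xi\Bigr)^{1/2}\|v(\cdot,t)\|_{L^2_x}\lesssim\|v(\cdot,t)\|_{L^2_x}.$$
Evaluating this at $t=0$, together with the cancellation $P_{lo}(F_1-F_2)(\cdot,0)\equiv 0$ proved in \eqref{Plo}, immediately yields \eqref{lemma4.1a}; it also handles the high-frequency half of \eqref{lemma4.1}.

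\textbf{Low-frequency part.} It remains to bound $P_{lo}(F_1-F_2)(\cdot,t)$ in $L^\infty_x$ uniformly for $t\in[0,1]$. The equation \eqref{gauge0} (applied to each $F_i$ and subtracted) gives
$$\partial_t(F_1-F_2)=-\mathcal{H}\partial_x v+\tfrac12(u_1+u_2)v.$$
Applying $P_{lo}$ and using that $P_{lo}$ has compactly supported symbol, Bernstein-type inequalities provide
$$\|P_{lo}\mathcal{H}\partial_x v\|_{L^\infty_x}\lesssim\|v\|_{L^2_x},\qquad \|P_{lo}((u_1+u_2)v)\|_{L^\infty_x}\lesssim\|(u_1+u_2)v\|_{L^1_x}\lesssim(\|u_1\|_{L^2_x}+\|u_2\|_{L^2_x})\|v\|_{L^2_x}.$$
Integrating in time from $0$ to $t\le 1$, starting from the vanishing initial datum $P_{lo}(F_1-F_2)(\cdot,0)\equiv0$ (\eqref{Plo}), and using the smallness \eqref{fd},
$$\|P_{lo}(F_1-F_2)(\cdot,t)\|_{L^\infty_x}\lesssim\int_0^t(1+\|u_1\|_{L^2_x}+\|u_2\|_{L^2_x})\|v(\cdot,s)\|_{L^2_x}\,ds\lesssim\|v\|_{L^\infty_1L^2_x}.$$
Combining the two pieces yields \eqref{lemma4.1}.

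The only delicate point is the vanishing of $P_{lo}(F_1-F_2)(\cdot,0)$, which is precisely the role of the hypothesis $P_{LO}\varphi_1=P_{LO}\varphi_2$ and has already been isolated in \eqref{Plo}. Without that cancellation one could not hope for an $L^\infty$ Lipschitz bound on the gauge factor, since $F$ itself grows like $|x|^{1/2}$ for a generic $L^2$ datum; the key observation is that once the low frequencies at $t=0$ agree, the PDE forces the low-frequency discrepancy to remain controlled by $\|v\|_{L^\infty_1L^2_x}$, while the high-frequency discrepancy is automatically controlled because $\partial_x^{-1}$ is bounded $L^2\to L^\infty$ away from the origin.
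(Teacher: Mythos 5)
Your proof is correct and follows essentially the same route as the paper: split $F_1-F_2$ into low and high frequency parts, control the high part by Bernstein through $\partial_x(F_1-F_2)=v$, and control the low part by evolving it in time from the cancellation \eqref{Plo}. The only cosmetic difference is that the paper writes the low-frequency part via Duhamel's formula, absorbing the dispersive term into the unitary group $U(t-\tau)$, whereas you keep $P_{lo}\mathcal{H}\partial_x v$ as an extra source term and bound it directly; both work since the symbol of $P_{lo}\mathcal{H}\partial_x$ is compactly supported and $t\le 1$.
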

\begin{proof}
\eqref{lemma4.1a} clearly follows from \eqref{Plo} together with Bernstein inequality.
To prove \eqref{lemma4.1} we set  $G=F_1-F_2$, $G_{lo}=P_{lo}G$ and $G_{hi}=P_{hi}G$. Then,
\begin{equation} \label{lemma4.2}
\|G\|_{L^{\infty}_{x,1}} \le \|G_{lo}\|_{L^{\infty}_{x,1}}+\|G_{hi}\|_{L^{\infty}_{x,1}}.
\end{equation}
Observe that, from the Duhamel principle and \eqref{Plo}, $G_{lo}$ satisfies
$$
G_{lo} =   \frac12\int_0^tU(t-\tau)P_{lo}\big((u_1+u_2)v\big)(\tau)d\tau\\
$$
Therefore, it follows using Bernstein and H\"older's inequality that
\begin{equation} \label{lemma4.3}
\|G_{lo}\|_{L^{\infty}_{x,1}} \lesssim \|(u_1+u_2)v\|_{L^{\infty}_1L^1_x}
\lesssim \big(\|u_1\|_{L^{\infty}_1L^2_x}+\|u_2\|_{L^{\infty}_1L^2_x} \big)
\|v\|_{L^{\infty}_1L^2_x}.
\end{equation}
On the other hand, the Bernstein inequality ensures that
\begin{equation} \label{lemma4.4}
\|G_{hi}\|_{L^{\infty}_{x,1}} \lesssim \|\partial_xG_{hi}\|_{L^{\infty}_1L^2_x} \lesssim \|v\|_{L^{\infty}_1L^2_x},
\end{equation}
since $\partial_xG=v$. The proof of Lemma \ref{lemma4} is concluded gathering \eqref{fd}, \eqref{lemma4.2}--\eqref{lemma4.4}.
\end{proof}

Finally, estimates \eqref{theo1.8}--\eqref{lemma4.1} lead to
\begin{eqnarray*}
\lefteqn{\|z\|_{Y^s_1}+\|v\|_{X_1^{s-1,1}}+\|v\|_{L^{\infty}_1H^s_x}+\|J^s_xv\|_{L^4_{x,1}}} \\&
\quad \quad \quad \quad \quad  \lesssim \|\varphi_1-\varphi_2\|_{H^s} + \varepsilon \big( \|z\|_{Y^s_1}+\|v\|_{X_1^{s-1,1}}+\|v\|_{L^{\infty}_1H^s_x}+\|J^s_xv\|_{L^4_{x,1}}\big),
\end{eqnarray*}
Therefore we conclude that there exists $0<\varepsilon_1\le \varepsilon_0 $ such that
\begin{equation} \label{toto}
\|z\|_{Y^s_1}+\|v\|_{X_1^{s-1,1}}+\|v\|_{L^{\infty}_1H^s_x}+\|J^s_xv\|_{L^4_{x,1}} \lesssim \|\varphi_1-\varphi_2\|_{H^s}
\end{equation}
provided  $ u_1 $ and $ u_2 $ satisfy  \eqref{fd} with $ 0<\varepsilon\le \varepsilon_1 $.

\subsection{Well-posedness} \label{Pers}
Let $ u_0\in B_{\varepsilon_1} \cap  H^s(\R) $  and
 consider the sequence of initial data $\{u_0^j\} \subset H^{\infty}(\mathbb R)$,
defined by
\begin{equation} \label{theo1.12}
u_0^j=\mathcal{F}_x^{-1}\big(\chi_{|_{[-j,j]}}\mathcal{F}_xu_0\big), \quad \forall \, j \ge 20.
\end{equation}
Clearly,  $\{u_0^j\} $ converges to $ u_0 $ in $ H^s(\R) $. By the
classical well-posedness theory, the associated sequence of
solutions $\{u^j\}$ is a subset of $C([0,1];H^{\infty}(\mathbb R))$
and according to Subsection \ref{Exist}, it satisfies $ N_1^s(u^j)
\le C_0  \varepsilon_1 $. Moreover, since
 $ P_{LO}  u_0^j=P_{LO} u_0 $ for all $ j\ge 20 $,  it follows from the preceding subsection that
 \begin{equation} \label{theo1.13}
\|u^j-u^{j'}\|_{L^{\infty}_1H^s_x} + \|u^j-u^{j'}\|_{L^4 _1 W^{s,4}_x}+\|w^j-w^{j'}\|_{X^{0,1/2}}
\lesssim \|u_0^j-u_0^{j'}\|_{H^s_x} \, .
\end{equation}
Therefore the sequence $\{u^j\}$ converges strongly in $L^\infty_1 H^s(\R) \cap L^{4}_1 W^{s,4} $ to some function  $ u\in C([0,1]; H^s(\R) $ and $ \{w_j\}_{j\ge 4}  $
 converges strongly to some function $ w $ in $X^{s,1/2} $.  Thanks to these strong convergences it is easy to check that $ u $ is a solution to \eqref{BO}
   emanating from $ u_0 $ and that $ w=P_{hi}(\partial_x(e^{-i\partial_x^{-1} u /2})) $. Moreover from the conservation of the $ L^2(\R) $-norm,
 $ u\in C_b (\R;L^2(\R)) \cap C(\R;H^s(\R)) $.

Now let $ {\tilde u} $ be another solution of \eqref{BO} on $[0,T] $ emanating from $ u_0 $ belonging to the same class  of regularity as $ u $. By using again the scaling argument we can always assume that $
 \|  {\tilde u}\|_{L^\infty_T L^2_x} +\|  {\tilde u}\|_{L^4_{x,T}} \le C_0 \varepsilon_1$.
  Moreover, setting $ {\tilde w}:=P_{+hi} (e^{-i F[ {\tilde u}]}) $,
by the Lebesgue monotone convergence theorem, there exists $ N>0 $ such that
$ \|P_{\ge N} {\tilde w}\|_{X^{0,\frac12}_{T}} \le C_0 \varepsilon_1/2 $. On the other hand, using Lemma \ref{prop1.1}-\ref{prop1.2}, it is easy to check that
 \begin{eqnarray*}
\| (1-P_{\ge N}) {\tilde w}\|_{X^{0,\frac12}_{T}}  &\lesssim &    \|u_0\|_{L^2}+  N T^{\frac14} \|{\tilde u}\|_{L^4_{x,T}}
 \|{\tilde w}\|_{L^4_{x,T}} +\|{\tilde u}\|_{L^4_{x,T}}^2 \\
& \lesssim  &  \|u_0\|_{L^2}+ NT^{\frac14}  \|{\tilde w}\|_{X^{0,\frac12}_T} \|{\tilde u}\|_{L^4_{x,T}}
+ \|{\tilde u}\|_{L^4_{x,T}}^2\quad .
\end{eqnarray*}
Therefore, for $ T>0 $ small enough we can require that $ {\tilde u} $ satisfies the smallness condition \eqref{fd}
 with $ \varepsilon_1 $  and
 thus by  \eqref{toto}, $ {\tilde u}\equiv u $ on $ [0,T] $. This proves the uniqueness result for  initial data belonging to
   $ B_{\varepsilon_1}$.

Next, we turn to the continuity of the flow map. Fix $u_0 \in B_{\epsilon_1}$ and $\lambda>0$ and consider the emanating solution $u \in C([0,1]; H^s(\mathbb R))$. We will prove that if $v_0 \in B_{\epsilon_1}$ satisfies $\|u_0-v_0\|_{H^s} \le \delta$, where $\delta$ will be fixed later, then the solution $v$ emanating from $v_0$ satisfies
\begin{equation} \label{theo1.14}
\|u-v\|_{L^{\infty}_1H^s_x} \le \lambda.
\end{equation}
For $j \ge 1$, let $u_0^j$ and $v_0^j$ be constructed as in \eqref{theo1.12}, and denote by $u^j$ and $v^j$ the solutions emanating from $u_0^j$ and $v_0^j$. Then, it follows by the triangular inequality that
\begin{equation} \label{theo1.15}
\|u-v\|_{L^{\infty}_1H^s_x} \le \|u-u^j\|_{L^{\infty}_1H^s_x}
+\|u^j-v^j\|_{L^{\infty}_1H^s_x}+\|v-v^j\|_{L^{\infty}_1H^s_x}.
\end{equation}
First, according to \eqref{theo1.13}, we can choose $ j_0 $ large enough so that
$$
\|u-u^{j_0}\|_{L^{\infty}_1H^s_x}
+\|v-v^{j_0}\|_{L^{\infty}_1H^s_x}\le 2 \lambda/3 \; .
$$
Second, from the definition of $u^j_0$ and $v^j_0$ in \eqref{theo1.12} we infer that
\begin{displaymath}
\|u_0^j-v_0^j\|_{H^3} \le  j^{3-s}  \|u_0-v_0\|_{H^s} \le j^{3-s}  \delta.
\end{displaymath}
Therefore, by using the continuity of the flow map for smooth initial data, we can choose $\delta>0$,  such that
$$
\|u^{j_0}-v^{j_0}\|_{L^{\infty}_1H^s_x} \le \frac{\lambda}{3},
$$
This concludes the proof of Theorem \ref{theo1}.

\section{Improvement of the uniqueness result for $ s>0$}\label{section5}
In this section we prove that uniqueness holds for initial data $
u_0\in H^s(\R) $, $s>0$, in the class  $ u\in L^\infty_T H^s_x \cap
L^4_{T}W^{s,4}_x $. The  great interest of this result is that we
do not assume any condition on the gauge transform of $ u$ anymore.
Moreover, when $s>\frac14$, the Sobolev embedding $L^{\infty}_TH^s_x
\hookrightarrow L^4_TW^{0+,4}_x$ ensures that uniqueness holds in
$L^\infty_T H^s_x$, and thus the Benjamin-Ono equation is
unconditionally well-posed in $ H^s(\R) $ for $ s>\frac14$.

According to the uniqueness result $ i) $ of Theorem \ref{theo1}, it
suffices to prove that for any solution $ u $  to \eqref{BO} that
belongs to  $ L^\infty_T H^s_x \cap L^4_{T} W^{s,4} $, the
associated gauge function $ w=\partial_x P_{hi}(e^{-\frac{i}2F[u]})
$   belongs to $ X^{0,\frac12}_T $. The proof is based on the
following bilinear estimate that is shown in the appendix :
\begin{proposition} \label{bilin}
Let $s>0$. Then, there exists $0<\delta<\frac{s}{10}$ and
$\theta \in (\frac12,1)$, let us say $\theta=\frac12+\delta$, such
that
\begin{equation} \label{bilin1}
\begin{split}
\|P_{+hi}(WP_-\partial_xu)&\|_{X^{\frac12,-\frac12+2\delta}} \\ &
\lesssim \|W\|_{X^{\frac12,\frac12+\delta}}
\big(\|J^s u\|_{L^2_{x,t}}+\|J^{s}u\|_{L^4_{x,t}}+\|u\|_{X^{s-\theta,\theta}}\big).
\end{split}
\end{equation}
\end{proposition}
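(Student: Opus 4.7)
The proof follows by duality and dyadic analysis, adapting the scheme used for Proposition \ref{bilincrit}. By duality, \eqref{bilin1} reduces to bounding
\begin{equation*}
I=\int_{\mathcal{D}}\frac{\xi^{1/2}}{\langle\sigma\rangle^{1/2-2\delta}}\,\tilde{h}(\xi,\tau)\,\frac{1}{\xi_1^{1/2}\langle\sigma_1\rangle^{1/2+\delta}}\,\tilde{W}(\xi_1,\tau_1)\,\xi_2\,\widehat{u}(\xi_2,\tau_2)\,d\nu
\end{equation*}
by $\|\tilde{h}\|_{L^2}\bigl(\|J^s_xu\|_{L^2_{x,t}}+\|J^s_xu\|_{L^4_{x,t}}+\|u\|_{X^{s-\theta,\theta}}\bigr)$, where $\tilde{h},\tilde{W}\ge 0$ absorb the dual weights (so that $\|\tilde{W}\|_{L^2}=\|W\|_{X^{1/2,1/2+\delta}}$), and $\mathcal{D}$, $d\nu$ are as in \eqref{bilincrit3bb}--\eqref{bilincrit3bbb}. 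Compared with the $s=0$ estimate of Proposition \ref{bilincrit}, the new difficulty is the extra $\xi^{1/2}$ factor coming from the $H^{1/2}_x$-weight on the target; this loss must be absorbed by the $s$ derivatives of $u$ now available on the right-hand side. The easy contribution $|\xi_2|\le 1$ is disposed of first, exactly as in the proof of \eqref{bilincrit1}.

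Next, a Littlewood--Paley decomposition $I=\sum_{N,N_1,N_2}I_{N,N_1,N_2}$ separates the two geometric regimes: high-low ($N_1\sim N\ge N_2$) and high-high ($N_1\sim N_2\ge N$), while the resonance identity $\sigma_1+\sigma_2-\sigma=-2\xi\xi_2$ forces $\max(|\sigma|,|\sigma_1|,|\sigma_2|)\gtrsim NN_2$. The domain then partitions into the three regions $\mathcal{A}_{N,N_2},\mathcal{B}_{N,N_2},\mathcal{C}_{N,N_2}$ of \eqref{bilincrit4b}. In regions $\mathcal{A}$ and $\mathcal{B}$, where the maximal modulation is on the $h$- or $W$-side, the gain $(NN_2)^{1/2-2\delta}$, respectively $(NN_2)^{1/2+\delta}$, more than compensates the $\xi^{1/2}|\xi_2|$ appearing in the multiplier; after Plancherel, dyadic orthogonality and H\"older's inequality, one places $\tilde{W}$ and $u$ in $L^4_{x,t}$ via the Bourgain--Strichartz estimate \eqref{bourgstrich.2}, using $J^s_xu\in L^4_{x,t}$ whose $N_2^{-s}$ cost makes the geometric sum over $N_2\le N$ converge thanks to $s>2\delta$.

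In region $\mathcal{C}$, I would invoke the Cauchy--Schwarz method of Kenig--Ponce--Vega: the inequality $\langle\sigma_2\rangle\gtrsim\xi|\xi_2|$ valid there allows one to write $\widehat{u}=\langle\xi_2\rangle^{\theta-s}\langle\sigma_2\rangle^{-\theta}\bigl[\langle\xi_2\rangle^{s-\theta}\langle\sigma_2\rangle^{\theta}\widehat{u}\bigr]$, the last bracket having $L^2$-norm bounded by $\|u\|_{X^{s-\theta,\theta}}$. After extracting $\langle\sigma_2\rangle^{-\theta}\lesssim(\xi|\xi_2|)^{-\theta}$, the remaining multiplier $\xi^{1/2-\theta}\xi_1^{-1/2}|\xi_2|^{1-s}$ becomes summable across the dyadic decomposition provided $\theta=\tfrac12+\delta$ and $s>2\delta$; a final application of H\"older's inequality and \eqref{bourgstrich.2} then closes the estimate.

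The main obstacle is the simultaneous tuning of parameters: we need $\delta>0$ small enough that $\tfrac38<\tfrac12-2\delta$ (so that \eqref{bourgstrich.2} applies in $\mathcal{A}$ and $\mathcal{B}$) and $s>2\delta$ (so that the dyadic sums converge when $J^s_xu$ absorbs the high-low losses), while simultaneously $\theta=\tfrac12+\delta>\tfrac12$ is needed in $\mathcal{C}$ to convert the $\langle\sigma_2\rangle^\theta$ gain into a frequency gain via $\langle\sigma_2\rangle^\theta\gtrsim(\xi|\xi_2|)^\theta$. The constraint $0<\delta<s/10$ comfortably accommodates all of these. Region $\mathcal{C}$ is the most delicate: it is precisely where the $X^{s-\theta,\theta}$-norm of the solution appears, linking the bilinear estimate back to Proposition \ref{apriori u} and allowing the iteration to close.
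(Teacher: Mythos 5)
Your overall architecture coincides with the paper's: duality, removal of the contribution $|\xi_2|\le 1$, Littlewood--Paley decomposition combined with the resonance identity, the splitting into the regions $\mathcal{A}_{N,N_2}$, $\mathcal{B}_{N,N_2}$, $\mathcal{C}_{N,N_2}$ of \eqref{bilincrit4b}, and the $L^4\times L^4$ Bourgain--Strichartz treatment of $\mathcal{A}$ and $\mathcal{B}$. (A minor point: after summing over $N_2\le N_1$ the high-low blocks in $\mathcal{A}$ contribute $N_1^{4\delta-s}$, so the Cauchy--Schwarz in $N_1$ requires $s>4\delta$ rather than your $s>2\delta$; your standing hypothesis $\delta<s/10$ still covers this.)

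The genuine gap is in region $\mathcal{C}$. You correctly announce the Kenig--Ponce--Vega method and correctly factor $\widehat{u}$ so that the bracket carries $\|u\|_{X^{s-\theta,\theta}}$, but what you then describe --- bounding the residual multiplier $\xi^{1/2-\theta}\xi_1^{-1/2}|\xi_2|^{1-s}$ pointwise and closing with H\"older and \eqref{bourgstrich.2} --- is not that method, and it fails for $0<s<\frac12$, which is exactly the range needed in Section \ref{section5}. Indeed, on a high-high block ($N_1\sim N_2\ge N$) this multiplier has size $N^{-\delta}N_1^{1/2-s}$, and on a high-low block with $N_2\sim N_1$ it has size $N_1^{1/2-s-\delta}$; these are unbounded in the dyadic parameters, and the three factors left after your extraction (two global $L^4$ Strichartz norms and one $L^2$ norm) carry no extra decay with which to sum the blocks. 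The point of the KPV scheme, as carried out in the appendix, is precisely to recover what is missing here: two applications of Cauchy--Schwarz reduce the bound to $\sup_{\xi_2,\tau_2}\bigl(L_{\widetilde{\mathcal{C}}}(\xi_2,\tau_2)\bigr)^{1/2}$, where $L_{\widetilde{\mathcal{C}}}$ is the integral over $(\xi_1,\tau_1)$ of the squared multiplier against $\langle\sigma\rangle^{-(1-4\delta)}\langle\sigma_1\rangle^{-(1+2\delta)}$. The $\tau_1$-integral is evaluated by the calculus Lemma \ref{calculus}, and the $\xi_1$-integral by the change of variables $\mu_2=\sigma_2+2\xi\xi_2$, $d\mu_2=2\xi_2\,d\xi_1$, restricted to $|\mu_2|\lesssim|\sigma_2|$. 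This change of variables produces an extra factor $|\xi_2|^{-1}$ --- a full power of $\xi_2$ gained from the $\xi_1$-integration, invisible in any pointwise multiplier bound --- and it is exactly this gain that yields $L_{\widetilde{\mathcal{C}}}\lesssim\langle\sigma_2\rangle^{3\delta-s}\lesssim 1$. Without carrying out this weighted-integral computation, your region-$\mathcal{C}$ argument does not close.
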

First note that by the same scaling argument as in Section \ref{Pers}, for any given
$ \varepsilon>0 $, we can always assume that $\|J^s u\|_{L^\infty_T L^2_x}
 + \| J^s u \|_{L^4_{Tx}} \le  \varepsilon  $  and by \eqref{apriori u.1} it follows that
$\|u\|_{X^{s-\theta,\theta}_T }\lesssim \varepsilon $ for $ 0\le \theta\le 1 $.

Now, since $ u\in L^\infty([0,T];H^s(\R)) \cap L^4_T W^{s,4}_{x} $
and satisfies \eqref{BO}, it follows that $ u_t\in
L^\infty([0,T];H^{s-2}(\R)) $. Therefore  $F:= F[u] \in
L^\infty([0,T];H^{s+1}_{loc}) $ and $
\partial_t F\in  L^\infty([0,T];H^{s-1}_{loc}) $. It ensures that
\begin{equation} \label{Uni1}
W:= P_{hi} (e^{-\frac{i}2F})\in L^\infty([0,T];H^{s+1}(\R))\cap L^4_T W^{s+1,4}_x 
\hookrightarrow X^{1,0} ,
\end{equation}
$e^{-\frac{i}2F} \in
L^\infty([0,T];H^{s+1}_{loc}) $ and   the following calculations are
thus justified:
\begin{eqnarray*}
\partial_t W=\partial_t P_+(e^{-\frac{i}2F})& = & -\frac{i}{2}P_{hi}(F_t e^{-\frac{i}2F}) \\
& = & -\frac{i}{2} P_{hi}\Bigl(e^{-\frac{i}2F} ( -{\mathcal H}
F_{xx}+\frac12F_x^2 ) \Bigr)
\end{eqnarray*}
and
$$
\partial_{xx} W=\partial_{xx}P_{hi}(e^{-\frac{i}2F})=P_{hi}\Bigl(e^{-\frac{i}2F}
(-\frac14F_x^2-\frac{i}{2}F_{xx} ) \Bigr) \quad .
$$
It follows that $ W $ satisfies at least in a distributional sense,
\begin{equation} \label{eqW2}
\left\{\begin{array}{ll} \partial_tW-i\partial_x^2W
=-P_{+hi}\big(WP_-\partial_xu\big)-P_{+hi}\big(P_{lo}e^{-\frac{i}{2}F}
P_{-}\partial_xu\big) \\
W(\cdot,0)=P_{+hi}(e^{-\frac{i}{2}F[u_0]})\quad .
\end{array} \right.
\end{equation}
From \eqref{Uni1} and  Lemma \ref{lemma2} we thus deduce that  $ W \in X^{s,1}_T $, so that,
by interpolation with \eqref{Uni1}, $W\in X^{1/2, 1/2+}_T$.
   But, $ u $ being given in $L^{\infty}_TH^s_x \cap
L^4_TW^{s,4}_x \cap X^{s-\theta,\theta}_T$, on one hand gathering \eqref{prop1.2.1},  the  bilinear estimate  \eqref{bilin1} and \eqref{zq}, we infer that there exists only one solution to \eqref{eqW2} in    $ X^{1/2,\frac{1}{2}+}_T $. Hence, $ w=\partial_x W $ belongs to $ X^{-1/2,\frac{1}{2}+}_T$
and is the unique solution to \eqref{gauge2} in
$X^{-1/2,\frac{1}{2}+}_T $ emanating from  the initial data
$w_0=\partial_x P_{hi} (e^{-\frac{i}{2}F[u_0]}) \in L^2(\R)$. On the
other hand,  according to
Proposition \ref{apriori w}, one can construct a solution to \eqref{gauge2}
emanating from $ w_0$ and belonging to
$ Y^s_T $, by using a Picard iterative scheme. Moreover, using \eqref{BO} and Lemma \ref{lemma2} we can easily check
that this solution belongs to $ X^{-1,1}_T $ and thus by interpolation to
$X^{s-,\frac{1}{2}+}_T \hookrightarrow X^{-1/2, \frac{1}{2}+}_T $.
This ensures that $ w =\partial_x P_{hi} (e^{-iF/2}) $ belongs to $
Y^s_T \hookrightarrow X^{0,1/2}_T $ which concludes the proof.

 %%%%%%%%%%%%%%%%%%%%%%%%%%%%%%%%%%%
\section{Continuity of the flow-map for the weak $ L^2$-topology}

In \cite{CK} it is proven that, for any $ t\ge 0 $,  the flow-map $
u_0\mapsto u(t) $ associated to the Benjamin-Ono equation is
continuous from $ L^2(\R) $ equipped with the weak topology into
itself. In this section, we explain how the uniqueness part of
Theorem \ref{theo1} enables to really simplify the proof of this
result by following the approach developed in \cite{GoMo}.

Let $\{u_{0,n}\}_n \subset L^2(\R) $ be a sequence of initial data that
converges weakly to $ u_0 $ in $ L^2(\R) $ and let $ u $ be the
solution emanating from $ u_0 $ given by Theorem \ref{theo1}. From
the Banach-Steinhaus theorem, we know that $ \{u_{0,n}\}_n $ is bounded
in $ L^2(\R) $ and from Theorem \ref{theo1} we know that $ \{u_{0,n}\}_n
$ gives rise to a sequence $ \{u_n\}_n $ of solutions to \eqref{BO}
bounded in $ C([0,1];L^2(\R))  \cap L^4(]0,1[\times \R)$ with an
associated sequence of gauge functions $  \{w_n\}_n $  bounded in $
X^{0,1/2}_1 $. Therefore there exist $ v\in  L^\infty(]0,1[;L^2(\R))
\cap X^{-1,1}_1 \cap L^4(]0,1[\times \R)$ and $ z\in X^{0,1/2}_1 $
such that, up to the extraction of a subsequence,  $ \{u_n\}_n $
converges to $ v$ weakly in $ L^4(]0,1[\times \R) $ and weakly star
in $ L^\infty(]0,1[\times \R) $  and  $  \{w_n\}_n $ converges to $
z $ weakly in $ X^{0,1/2}_1 $.  We now need some compactness on  $
\{u_n\}_n $ to ensure that $  z $ is the gauge transform of $ v$.
In this direction, we first notice, since $ \{w_n\}_n $
is bounded in $ X^{0,1/2}_1 $ and by using the Kato's smoothing effect
injected in Bourgain's spaces framework, that  $ \{D^{\frac14} _x
w_n \}_n $ is bounded in $ L^4_x L^2_1 $.  Let $
\eta_R(\cdot):=\eta(\cdot/R) $.  Using \eqref{gauge3} and Lemma
\ref{lemma2} we infer that
\begin{align*}
\| D^{\frac14}_x   & P_{+HI}u_n\|_{L^2(]0,1[\times ]-R,R[}
\lesssim \| D_x^{\frac14}P_{+HI}\big(e^{\frac{i}{2}F[u_n]}w_n \eta_{R}\big)\|_{L^2_{1,x}}\\
& +\|D_x^{\frac14} P_{+HI}\big(P_{+hi}e^{\frac{i}{2}F[u_n]}\partial_xP_{lo}e^{-\frac{i}{2}F[u_n]}\big)\|_{L^2_{1,x}} \\
&   +\|D_x^{\frac14} P_{+HI}\big(P_{+HI}e^{\frac{i}{2}F[u_n]}\partial_xP_{-hi}e^{-\frac{i}{2}F[u_n]}\big)\|_{L^2_{1,x}}\\
& \lesssim   \|D^{\frac14}_x (w_n \eta_R) \|_{L^{2}_x L^2_1} + \|
D_x^{\frac14} e^{iF[u_n]} \|_{L^8_{1,x}} \| w_n
\|_{L^{\frac83}_{1,x}} + \| u_n\|_{L^4_{1,x}}^2
\end{align*}
But clearly,  $$\|D^{\frac14}_x (w_n \eta_R) \|_{L^{2}_x
L^2_1}\lesssim C(R) (\| D^{\frac14} _x w_n\|_{L^4_x L^2_1} +\|
w_n\|_{L^2_{1,x}}) $$ and by interpolation  $ \| D_x^{\frac14}
e^{iF[u_n]} \|_{L^8_{1,x}}\lesssim \|u_n\|_{L^2_{1,x}}^{\frac34} $.
Therefore, recalling that the $ u_n $ are real-valued functions, it
follows  that $ \{  u_n\}_n $ is bounded in $L^2_1
H^{\frac14}(]-R,R[) $.

Since, according to the equation \eqref{BO},
$ \{\partial_t u_n\}_n $ is bounded in
$ L^2_1 H^{-2}_x $, Aubin-Lions compactness theorem and  standard diagonal extraction arguments ensure that there exists an increasing sequence of integer $ \{n_k\}_k $ such that $  u_{n_k} \to v  $ a.e. in $  ]0,1[\times \R $ and  $ u^2_{n_k} \rightharpoonup  v^2 $ in $ L^2(]0,1[\times \R) $.
In view of  our construction of the primitive $ F[u_n] $ of $ u_n$ (see Section \ref{GT}),
it is then easy to check  that $ F[u_{n_k}] $ converges  to the primitive $F[v]$ of $ v $  a.e. in $ ]0,1[\times \R $. This ensures that $   P_{+hi} (e^{-\frac{i}{2}  F[u_{n_k}]}) $ converges weakly to $
  P_{+hi} (e^{-\frac{i}{2} F[v]})  $ in $ L^2(]0,1[\times \R) $ and thus $ z $ is the gauge transform of $ v$ .
Passing to the limit in the equation ,  we conclude that $ v$
satisfies  \eqref{BO} and belong the class of uniqueness of Theorem
\ref{theo1}.

Moreover, setting  $(\cdot,\cdot)$ for the $L^2_x$ scalar
product, by (\ref{BO}) and the bounds above, it is easy to check
that, for any smooth space function $\phi $ with compact support,
the family $
 \{t\mapsto (u_{n_k}(t), \phi)\} $ is uniformly
equi-continuous on $ [0,1] $. Ascoli's theorem then ensures that
$ (u_{n_k}(\cdot),\phi)$ converges to $ (v(\cdot),\phi) $
uniformly  on $ [0,1] $ and thus $v(0)=u_0$. By uniqueness, it
follows that $ v\equiv u $  which ensures that the whole sequence $ \{u_n\} $ converges to $ v $ in  the sense above and not only a subsequence.  Finally, from the above convergence result,
it results that $u_n(t) \rightharpoonup u(t) $ in $L^2_x$ for all
$ t\in [0,1] $.\qed

\section{The periodic case}
In this section we explain how the bilinear estimate proved in Proposition
 \ref{bilincrit} can lead to a great simplification of the global well-posedness result in $ L^2(\T) $ derived  in \cite{Mo2} and to  new uniqueness results in $ H^s(\T) $, where $ \T=\R/2\pi \Z $. With the notations of \cite{Mo} these new results lead to the following  global well-posedness theorem  :
\begin{theorem} \label{theo1per} Let $s \ge 0$ be given. \mbox{ } \\
\underline{\it Existence :}
 For all $u_0 \in H^s(\T)$ and all $ T>0 $, there exists a
 solution
\begin{equation} \label{theo1.1}
u \in C([0,T];H^s(\T)) \cap X^{s-1,1}_T \cap L^4_TW^{s,4}(\T)
\end{equation}
of \eqref{BO} such that
\begin{equation} \label{theo1.1b}
w=\partial_xP_{+hi}\big(e^{-\frac{i}{2}\partial_x^{-1} {\tilde u}} \big) \in Y_T^s.
\end{equation}
 where
$$
 {\tilde u}:=u(t,x-t \int  \hspace*{-4mm}- u_0)-\int \hspace*{-4mm}- u_0 \quad \mbox{ and  }\quad
 \widehat{\partial_x^{-1}}:= \frac{1}{i\xi}, \, \xi\in \Z^* \quad .
 $$\vspace{2mm}\\
\underline{\it Uniqueness :} This solution is unique in the following classes :
$$
\begin{array}{lll}
i) & u\in L^\infty(]0,T[;L^2(\T) \cap L^4(]0,T[\times\T) \mbox{ and } w\in X^{0,\frac12}_T .\\
ii) &  u\in L^\infty(]0,T[;H^{\frac14}(\T) \cap L^4_T W^{\frac14,4}(\T) & \mbox{ whenever } s\ge \frac14.\\
iii)  &  u\in L^\infty(]0,T[;H^{\frac12}(\T)  & \mbox{ whenever } s\ge \frac12 .
\end{array}
$$
Moreover, $u\in C_b(\mathbb R;L^2(\T))$ and the flow map
data-solution $:u_0 \mapsto u$ is continuous from $H^s(\T)$
into $C([0,T];H^s(\T))$.
\end{theorem}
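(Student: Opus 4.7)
The plan is to follow the strategy of Theorem \ref{theo1} with the modifications needed on the torus. Since the mean value $\mu := \fint u_0$ is preserved by the flow of \eqref{BO}, the Galilean transformation $\tilde u(t,x) := u(t,x-\mu t)-\mu$ reduces the problem to the case $\fint u_0 = 0$, in which $\partial_x^{-1}u$ is unambiguously defined as the mean-zero $2\pi$-periodic primitive. All the function spaces $X^{s,b}$, $\widetilde Z^{s,b}$, $Y^s$ are defined in the periodic setting exactly as in \cite{Mo2}, replacing Fourier integrals by sums over $\xi\in\mathbb Z^*$, and the linear estimates of Subsection~2.3 transfer verbatim. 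The gauge transform $W=P_{+hi}(e^{-iF/2})$, $w=\partial_x W$ is built as in Subsection~\ref{GT}, and $w$ still satisfies \eqref{gauge2}.

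The technical heart is the periodic analogue of Proposition~\ref{bilincrit}. The resonance identity $\sigma_1+\sigma_2-\sigma=-2\xi\xi_2$ is unchanged for integer frequencies, so the splitting of the integration domain into $\mathcal A,\mathcal B,\mathcal C$ and the dyadic arguments of Section~3.2 carry over, the only substitution being the use of the periodic $L^4$-Strichartz estimate $\|P_N u\|_{L^4_{x,t}(\mathbb T^2)}\lesssim N^{0+}\|P_N u\|_{X^{0,3/8}}$ (Bourgain). The resulting $\varepsilon$-loss is absorbed by the high-frequency weights and has no effect on the final bound, so Propositions~\ref{apriori u} and \ref{apriori w} hold in the periodic setting. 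Existence and the contraction argument of Subsection~\ref{Pers} then run as before, producing a solution $u\in C([0,T];H^s(\mathbb T))\cap X^{s-1,1}_T\cap L^4_TW^{s,4}(\mathbb T)$ with $w\in Y^s_T$.

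The uniqueness argument simplifies drastically on the torus. If $f\in L^2(\mathbb T)$ has zero mean, its periodic primitive $F$ belongs to $H^1(\mathbb T)\hookrightarrow L^\infty(\mathbb T)$ with $\|F\|_{L^\infty}\lesssim\|f\|_{L^2}$. Applied to the difference of two solutions in class $i)$ (after the Galilean reduction), this yields directly
\[
\|e^{-iF_1/2}-e^{-iF_2/2}\|_{L^\infty_{x,t}}\lesssim \|u_1-u_2\|_{L^\infty_T L^2_x}
\]
\emph{without} having to assume that $\varphi_1$ and $\varphi_2$ share the same low-frequency part. Hence Lemma~\ref{lemma4} is no longer needed; inserting this bound into the periodic analogue of \eqref{theo1.8}--\eqref{theo1.10} closes the Lipschitz estimate on arbitrary $L^2$ balls (after rescaling), giving $i)$ as well as the continuity of the flow map from $H^s(\mathbb T)$ into $C([0,T];H^s(\mathbb T))$.

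For $ii)$ and $iii)$ I would adapt Section~\ref{section5}: given $u\in L^\infty_T H^s\cap L^4_T W^{s,4}$ with $s\ge\tfrac14$, the equation \eqref{BO} gives $u_t\in L^\infty_T H^{s-2}$, so $F\in L^\infty_T H^{s+1}_{\mathrm{loc}}$ and $W\in L^\infty_T H^{s+1}\hookrightarrow X^{1,0}_T$. Solving \eqref{eqW2} and combining with the periodic bilinear estimate one gets $W\in X^{s,1}_T$, and interpolation yields $W\in X^{1/2,1/2+}_T$ (in class $ii)$) or even $X^{1,1/2+}_T$ (in class $iii)$). One then identifies $w=\partial_x W$ with the Picard solution of \eqref{gauge2} in $Y^s_T$, reducing uniqueness at the $u$-level to uniqueness at the $w$-level, which is already established in $i)$. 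The thresholds $s\ge\tfrac14$ and $s\ge\tfrac12$ come from the weaker periodic Strichartz estimate: one needs $J_x^{1/4}u\in L^4_{x,t}$ to run the $\mathcal A$/$\mathcal B$ estimates without loss, and $s\ge\tfrac12$ to avoid any condition on $w$ via Sobolev embedding. The main obstacle I expect is precisely this careful bookkeeping of the $\varepsilon$-loss in the periodic $L^4$-Strichartz: one has to verify at each occurrence in the bilinear estimate that the loss is strictly consumed by a positive power of a dyadic gain arising from the resonance relation, rather than accumulating.
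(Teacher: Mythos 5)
Your overall strategy (Galilean reduction to mean zero, periodic gauge transform, periodic Bourgain spaces, transfer of the bilinear estimates) is the one the paper follows, but two of your key technical claims about the periodic estimates are wrong, and one of them hides the actual obstruction. First, Bourgain's periodic $L^4$ estimate $\|u\|_{L^4_{x,t}}\lesssim\|u\|_{X^{0,3/8}}$ is \emph{lossless} (it is the $L^6$ estimate that loses an $\varepsilon$); the paper relies precisely on the lossless version to transfer Proposition \ref{bilincrit} to the torus uniformly in the period $\lambda\ge1$. Your claim that an $N^{0+}$ loss "is absorbed by the high-frequency weights" is not only based on a false premise but would in fact be fatal at $s=0$: in regions $\mathcal A$ and $\mathcal B$ the dyadic sums are of the form $\sum_{j\ge0}2^{-j/2}\big(\sum_{N_1}\|P_{N_1}w\|_{L^4}^2\|P_{2^{-j}N_1}u\|_{L^4}^2\big)^{1/2}$ with no spare negative power of $N_1$, so an $N_1^{0+}$ loss could not be absorbed and the $L^2(\mathbb T)$ result would not follow.

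Second, your explanation of the thresholds $s\ge\frac14$ and $s\ge\frac12$ in the uniqueness classes $ii)$ and $iii)$ is not the right one. Regions $\mathcal A$ and $\mathcal B$ of the bilinear estimate work at level $s=0+$ on the torus exactly as on the line. The obstruction is in the $\sigma_2$-dominant region $\mathcal C$ of Proposition \ref{bilin}: on $\mathbb R$ one integrates in $\xi_1$ via Lemma \ref{calculus} (the Kenig--Ponce--Vega change of variables $\mu_2=\sigma_2+2\xi\xi_2$), and this continuous integration produces the gain $\langle\sigma_2\rangle^{3\delta-s}\lesssim1$ for any $s>0$; on $\mathbb Z$ this gain is unavailable and one is forced to bound the multiplier $|k|^{\frac12}|k_1|^{-\frac12}k_2^2\langle\sigma_2\rangle^{-1}\sim|k|^{-\frac12}|k_1|^{-\frac12}|k_2|$ pointwise, which is unbounded when $|k_2|\gg k$ unless one spends $s\ge\frac14$ derivatives on $u$ (giving $|k|^{-\frac14}|k_1|^{-\frac34}|k_2|^{\frac34}\lesssim1$). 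This is Proposition \ref{bilinperiodic} and it is what forces $\frac14$ and, via Sobolev embedding, $\frac12$. Finally, a minor but necessary point you omit: the mean-zero periodic primitive satisfies $F_t+\mathcal H F_{xx}=\tfrac12F_x^2-\tfrac12P_0(F_x^2)$, so the gauge equation \eqref{eqw} carries the extra zero-mode term $\tfrac{i}{4}P_0(F_x^2)\,w$, which must be recorded (it is harmless but it means $w$ does not satisfy \eqref{gauge2} verbatim).
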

\noindent {\bf Sketch of the proof.} In the periodic case, following \cite{Mo}, the gauge transform is defined as follows :
Let $ u $ be a smooth $2\pi  $-periodic  solution of (BO) with initial data $ u_0 $.
 In the sequel,  we will  assume that $u(t)  $ has  mean value zero for all time.
  Otherwise we do  the change of unknown :
 \begin{equation}
 {\tilde u}(t,x):=u(t,x-t \int \hspace{-4mm}- u_0) -\int  \hspace{-4mm}-  u_0 \label{chgtvar}\quad  ,
 \end{equation}
 where $ \int  \hspace{-4mm}- u_0:=\frac{1}{2\pi } \int_{\T } u_0 $ is the mean value of $ u_0 $. It is easy to see that
 $ {\tilde u}$ satisfies (BO) with $ u_0-\int  \hspace{-3mm}- u_0 $ as initial data and since $ \int \hspace{-3mm}-  {\tilde u} $
  is preserved by the flow of (BO), $  {\tilde u}(t) $ has mean value zero for all time.
 We  take for the  primitive of $ u$   the unique periodic, zero mean value, primitive of $ u $ defined by
 $$
 \hat {F}(0) =0 \quad \mbox{ and } \widehat{F}(\xi)=\frac{1}{ i\xi} \hat{u}(\xi) , \quad \xi\in \Z^* \quad .
 $$
The gauge transform is then defined by
\begin{equation}
W:=P_+(e^{-iF/2}) \quad . \label{defW}
\end{equation}
Since $ F $ satisfies $$ F_t +{\mathcal H} F_{xx}=\frac{F_x^2}{2}-\frac{1}{2} \int  \hspace{-4mm}-  F_x^2=
  \frac{F_x^2}{2}-\frac{1}{2} P_0(F_x^2) \quad , $$
we finally obtain that $ w:=W_x=-\frac{i}{2} P_{+hi}(e^{-iF/2} F_x)
=-\frac{i}{2} P_+(e^{-iF/2} u)$ satisfies
\begin{eqnarray}
w_t-iw_{xx} & = & -\partial_x P_{hi}\Bigl[  e^{-iF/2}\Bigl(P_-(F_{xx})-\frac{i}{4} P_0(F_x^2)\Bigr)\Bigr] \nonumber \\
 & =  & -\partial_x P_{+hi} \Bigl(W P_-( u_{x})  \Bigr)+ \frac{i}{4} P_0(F_x^2) w \label{eqw} \; .
\end{eqnarray}
Clearly the second term is harmless and the first one has exactly the same structure as the  one that we estimated in Proposition
 \ref{bilincrit} . Following carefully the proof of this proposition, it is not too hard to check that it also holds in the periodic case independently of the period $ \lambda\ge 1 $. Note in particular that \eqref{bourgstrich.2} also holds with $ L^4_{x,t}  $ and $ X^{0,\frac38} $ respectively replaced by $ L^4_{t,\lambda}$
  and $  X^{0,\frac38}_\lambda $, $\lambda\ge 1 $, where the subscript $\lambda $ denotes  spaces of functions  with space variable  on the torus $ \R/2\pi \lambda \Z $ (see \cite{Bo1} and also \cite{Mo}). This leads to a great simplification of the proof the global well-posedness in $ L^2(\T) $ proved in \cite{Mo2}.

 Now to derive the new uniqueness result we proceed exactly as in Section \ref{section5} except that Proposition \ref{bilin} does not hold on the torus. Actually,  on the torus it should be replaced by   \begin{proposition} \label{bilinperiodic}
   For $ s\ge \frac14 $ and all $ \lambda\ge 1 $  it holds
 \begin{equation} \label{bilintore}
\begin{split}
\|P_{+hi}(WP_-\partial_xu)&\|_{X^{s+\frac12,-\frac12}_{\lambda}} \\ &
\lesssim \|W\|_{X^{s+\frac12,\frac12}_{\lambda}}
\big(\|J_x^{s} u\|_{L^2_{T,\lambda}}+\|J_x^{s}u\|_{L^4_{T,\lambda}}+\|u\|_{X^{s-1,1}_\lambda}\big).
\end{split}
\end{equation}
\end{proposition}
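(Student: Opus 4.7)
The plan is to mirror the proof of Proposition \ref{bilincrit} on the torus $\T_\lambda$, adapting the derivative counts to the level $s \ge 1/4$ in place of $s = 0$. The three pillars of the real-line argument all remain available in the periodic setting: dyadic Littlewood-Paley decomposition; the resonance relation $\sigma_1 + \sigma_2 - \sigma = -2\xi\xi_2$ on $\mathcal{D} = \{\xi \ge 1,\ \xi_1 \ge 1,\ \xi_2 \le 0\}$; and the periodic Bourgain $L^4$-Strichartz estimate $\|f\|_{L^4_{t,\lambda}} \lesssim \|f\|_{X^{0, 3/8}_\lambda}$, which holds uniformly in $\lambda \ge 1$. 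The architecture therefore transfers: first dualize against $h \in L^2_{t,\lambda}$; decompose dyadically at frequencies $N$, $N_1$, $N_2$; identify the high-low regime ($N \sim N_1$, $N_2 \lesssim N_1$) and the high-high regime ($N_1 \sim N_2$, $N \lesssim N_1$); and, in each regime, partition the domain into the three subregions $\mathcal{A}$, $\mathcal{B}$, $\mathcal{C}$ of \eqref{bilincrit4b} according to which of $|\sigma|$, $|\sigma_1|$, $|\sigma_2|$ dominates $NN_2$.

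The new feature relative to Proposition \ref{bilincrit} is the extra $\langle \xi\rangle^{s+1/2}$ weight on the output side. In the high-low case this pairs cleanly with the $\langle \xi_1\rangle^{s+1/2}$ coming from $W\in X^{s+1/2,1/2}_\lambda$; in high-high one simply uses $N_1^{s+1/2}\ge N^{s+1/2}$. The surplus derivative $|\xi_2|$ coming from $\partial_x u$ is turned into $|\xi_2|^{1-s}$ after extracting $\langle\xi_2\rangle^s$ on the $u$-side, and is controlled as follows. In regions $\mathcal{A}$ and $\mathcal{B}$, the dominant modulation supplies a denominator of order $(NN_2)^{1/2}$, which one distributes between the two Strichartz factors exactly as in \eqref{bilincrit6}--\eqref{bilincrit12} to obtain the bound involving $\|J_x^s u\|_{L^4_{T,\lambda}}$, using $\|P_{N_2}u\|_{L^4_{t,\lambda}} \lesssim N_2^{-s}\|P_{N_2}J_x^s u\|_{L^4_{t,\lambda}}$ before summing in $N_2$. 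In region $\mathcal{C}$, the identity $\langle \sigma_2\rangle \gtrsim |\xi\xi_2|$ gives
$$\frac{\langle \xi\rangle^{s+1/2}\,|\xi_2|}{\langle \xi_1\rangle^{s+1/2}\,\langle \xi_2\rangle^{s-1}\,\langle \sigma_2\rangle} \lesssim 1,$$
which lets one replace the factor $\xi_2\widehat{u}$ by $\langle \xi_2\rangle^{s-1}\langle \sigma_2\rangle\widehat{u}$, estimated by $\|u\|_{X^{s-1,1}_\lambda}$ after a Cauchy--Schwarz paired with the Strichartz bounds for $W$ and for $\big(\widehat{h}/\langle \sigma\rangle^{1/2}\big)^{\vee}$, exactly as in \eqref{bilincrit14b}. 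The small-frequency piece $|\xi_2|\lesssim 1$ is disposed of at the outset by a crude H\"older/Bernstein estimate and contributes the $\|J_x^s u\|_{L^2_{T,\lambda}}$ term.

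The main obstacle is the bookkeeping in region $\mathcal{A}$ for high-high interactions ($N_1 \sim N_2 \gg N$): the modulation gain is only $(NN_2)^{1/2}$, while one must redistribute $N_2^{1-s}\le N_2^{3/4}$ across the two $L^4$-Strichartz factors. This is handled, as in the proof of \eqref{bilincrit17}, by introducing a dyadic parameter $j$ with $N = 2^{-j}N_1$ and using Cauchy--Schwarz in $j$: the factor $2^{-j/2}$ must survive after absorbing the Bernstein cost $N_2^{1/2-s}$, and this is precisely what the hypothesis $s \ge 1/4$ guarantees. Uniformity in $\lambda \ge 1$ is preserved throughout because the Strichartz and Bernstein estimates on $\T_\lambda$ hold with $\lambda$-independent constants (cf.\ \cite{Bo1,Mo}). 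Gathering the bounds from the three regions, the two interaction regimes and the low-frequency contribution yields \eqref{bilintore}.
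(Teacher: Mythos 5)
Your proposal follows the paper's own route: the paper proves this proposition precisely by re-running the dyadic/modulation decomposition of Propositions \ref{bilincrit} and \ref{bilin} on the torus, handling $\mathcal{A}$ and $\mathcal{B}$ with the $\lambda$-uniform periodic $L^4$ Strichartz estimate and $\mathcal{C}$ with a pointwise multiplier bound paid for by $\|u\|_{X^{s-1,1}_\lambda}$, and your region-$\mathcal{C}$ display is exactly the computation the paper writes down. The architecture and all the asserted bounds are correct for $s\ge\frac14$.

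The one substantive slip is that you have located the hypothesis $s\ge\frac14$ in the wrong region. In region $\mathcal{A}$, high-high case ($N_2\sim N_1$, $N=2^{-j}N_1$), after using $\langle\sigma\rangle^{1/2}\gtrsim(NN_2)^{1/2}$, extracting $N_1^{-(s+1/2)}$ via $\|P_{N_1}W\|_{L^4_{t,\lambda}}\lesssim N_1^{-(s+1/2)}\|P_{N_1}W\|_{X^{s+1/2,3/8}_{\lambda}}$ and $N_2^{-s}$ from $u$, the net dyadic factor is
\[
N^{s+\frac12}\,(NN_2)^{-\frac12}\,N_2^{1-s}\,N_1^{-(s+\frac12)}\;\sim\;2^{-js}N_1^{-s},
\]
which is summable in $j$ and square-summable in $N_1$ for \emph{every} $s>0$; nothing there requires $s\ge\frac14$, so your claim that the threshold is what makes the $2^{-j/2}$ ``survive'' in $\mathcal{A}$ is not where the constraint comes from. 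The threshold bites exactly in the region-$\mathcal{C}$ inequality you assert without comment: after $\langle\sigma_2\rangle\gtrsim|\xi\xi_2|$, your left-hand side becomes $\langle\xi\rangle^{s-\frac12}\langle\xi_1\rangle^{-(s+\frac12)}|\xi_2|^{1-s}\sim\langle\xi\rangle^{s-\frac12}|\xi_2|^{\frac12-2s}$ in the high-high regime, which is unbounded for $s<\frac14$ and $\lesssim\langle\xi\rangle^{-\frac14}\lesssim1$ precisely when $s\ge\frac14$ --- this is the paper's $|k^{3/4}k_1^{-3/4}k_2^{7/4}\langle\sigma_2\rangle^{-1}|\lesssim k^{-1/4}$ versus $|k^{-1/2}k_1^{-1/2}k_2|$ comparison, and it is the sole reason the periodic statement stops at $s=\frac14$. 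Since that inequality does hold under the stated hypothesis, your proof closes as written; but the step you flagged as the ``main obstacle'' is harmless, and the step you passed over silently is the one that needs checking.
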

Going back to  the proof of the bilinear estimate it easy to be convinced that the above estimate works at the level
 $ s=0+ $ in the regions $ {\mathcal A}$ and ${\mathcal B}$ (see the proof of Proposition \ref{bilin}), whereas in the region ${\mathcal C}$ we are clearly in trouble.
 Indeed, when $s=0$, \eqref{ed}  has then to be replaced by
 $$
 |k^{\frac12} k_1^{-\frac12}k_2^2\langle\sigma_2\rangle^{-1}| \sim |k^{-\frac12}k_1^{-\frac12} k_2 |
 $$
 which cannot be bound when $|k_2|>>k $. On the other hand  at the level $ s=\frac14 $  it becomes
   $$  |k^{\frac34} k_1^{-\frac34}k_2^{\frac74}\langle\sigma_2\rangle^{-1}| \sim |k^{-\frac14}k_1^{-\frac34} k_2^{\frac34} | \lesssim
    k^{- \frac14} \lesssim 1 \,
   $$
   which yields the result.

 With Proposition \ref{bilinperiodic} in hand, exactly the same procedure as in Section \ref{section5} leads to
  the uniqueness result in the class   $ u\in L^\infty_T H^{\frac14} (\T) \cap L^4_T W^{\frac14,4}(\T) $ and by Sobolev embedding to the uniqueness in the class $ u\in L^\infty_T H^{\frac12} (\T)$, i.e. unconditional uniqueness in $ H^{\frac12}(\T)$. As in the real line case, it proves the uniqueness of the (energy) weak solutions that belong to $L^\infty(\R; H^{1/2}(\T))$.

\section*{Appendix}

\subsection*{Proof of Proposition \ref{bilin}}

We will need that  following calculus lemma stated in \cite{GTV}.
\begin{lemma} \label{calculus}
Let $0< a_- \le a_+$ such that $a_-+a_+>\frac12$. Then, for all $\mu \in \mathbb R$
\begin{equation} \label{calculus1}
\int_{\mathbb R} \langle y\rangle^{-2a_-}\langle y-\mu\rangle^{-2a_+}dy \lesssim \langle \mu\rangle^{-s},
\end{equation}
where $s=2a_-$ if $a_+>\frac12$, $s=2a_--\epsilon$, if $a_+=\frac12$, and $s=2(a_++a_-)-1$, if $a_+<\frac12$ and $\epsilon$ denote any small positive number.
\end{lemma}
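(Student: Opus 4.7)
This is a classical estimate on the convolution of two Japanese-bracket weights; the proof is a careful region decomposition. First I would dispose of the case $|\mu|\lesssim 1$, in which $\langle \mu\rangle\sim 1$ and the conclusion reduces to $\int_{\mathbb R}\langle y\rangle^{-2a_-}\langle y-\mu\rangle^{-2a_+}dy \lesssim 1$ uniformly in $\mu$; this is immediate from the hypothesis $2(a_-+a_+)>1$, either by Cauchy--Schwarz after translating or by a direct local/far-field split (the possible singularity is absorbed since both exponents are positive and their sum exceeds $1/2$, while the tails are Lebesgue-integrable for the same reason). From now on I assume $|\mu|\gg 1$, so $\langle \mu\rangle\sim|\mu|$.

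The main step is to partition $\mathbb R$ into three regions tailored to the competition between $\langle y\rangle$ and $\langle y-\mu\rangle$:
\begin{equation*}
R_1=\{|y|\le |\mu|/4\},\qquad R_2=\{|y-\mu|\le |\mu|/4\},\qquad R_3=\mathbb R\setminus(R_1\cup R_2).
\end{equation*}
On $R_1$ one has $\langle y-\mu\rangle\sim\langle\mu\rangle$; on $R_2$ one has $\langle y\rangle\sim\langle\mu\rangle$; and on $R_3$ both brackets are $\gtrsim\langle\mu\rangle$. On $R_1$ I pull the factor $\langle\mu\rangle^{-2a_+}$ out of the integral and evaluate
\begin{equation*}
\int_{|y|\le |\mu|/4}\langle y\rangle^{-2a_-}dy\;\lesssim\; \begin{cases}1 & a_->1/2,\\ \log\langle\mu\rangle & a_-=1/2,\\ \langle\mu\rangle^{1-2a_-} & a_-<1/2.\end{cases}
\end{equation*}
The estimate on $R_2$ is perfectly symmetric after pulling out $\langle\mu\rangle^{-2a_-}$, with the same trichotomy now based on $a_+$. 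For $R_3$ I further split it into $\{|y|\ge |y-\mu|\}$ and its complement. On the first piece, $\langle y-\mu\rangle^{-2a_+}\le\langle y\rangle^{-2a_+}$ (since $a_+\ge 0$), so the integrand is controlled by $\langle y\rangle^{-2(a_-+a_+)}$ on $\{|y|\ge|\mu|/4\}$; since $2(a_-+a_+)>1$, this integrates to $\lesssim \langle\mu\rangle^{1-2(a_-+a_+)}$. The complementary subregion is handled identically with the roles exchanged.

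It remains to combine the three contributions and compare exponents with $a_-\le a_+$ in the three stated sub-cases. When $a_+>1/2$: the $R_2$ integral is uniformly bounded, giving $\langle\mu\rangle^{-2a_-}$; the $R_1$ output is $\langle\mu\rangle^{-2a_+}$ times a factor at worst $\langle\mu\rangle^{1-2a_-}$, and $1-2a_+<0$ ensures domination by $\langle\mu\rangle^{-2a_-}$; the $R_3$ contribution $\langle\mu\rangle^{1-2(a_-+a_+)}$ is even smaller. When $a_+<1/2$: all three regions scale like $\langle\mu\rangle^{1-2(a_-+a_+)}=\langle\mu\rangle^{-s}$ directly. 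The borderline case $a_+=1/2$ produces a logarithmic factor $\log\langle\mu\rangle$ on $R_2$, which is precisely what the $\varepsilon$ loss in the statement $s=2a_-\!-\varepsilon$ absorbs. The main (and only) obstacle is nothing deeper than careful bookkeeping across these three regions and three sub-cases; no analytical ingredient beyond elementary one-dimensional integration of $\langle y\rangle^{-\alpha}$ is required.
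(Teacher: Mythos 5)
Your argument is essentially correct, but note that the paper does not prove this lemma at all: it is quoted verbatim from \cite{GTV} and used as a black box, so there is no in-paper proof to compare against. Your three-region decomposition $R_1\cup R_2\cup R_3$ with the trichotomy $2a\gtrless 1$ for $\int_{|y|\le|\mu|/4}\langle y\rangle^{-2a}\,dy$ is the standard way to establish it, and your bookkeeping in the three sub-cases checks out: for $a_+>\frac12$ the region $R_2$ produces the dominant term $\langle\mu\rangle^{-2a_-}$ and $R_1$, $R_3$ are controlled by it precisely because $1-2a_+<0$; for $a_+<\frac12$ all three regions give $\langle\mu\rangle^{1-2(a_-+a_+)}$; and the logarithm from $R_2$ at $a_+=\frac12$ is exactly what the $\varepsilon$-loss absorbs. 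One direction is stated backwards in your treatment of $R_3$: on the piece $\{|y|\ge|y-\mu|\}$ you have $\langle y-\mu\rangle\le\langle y\rangle$, hence $\langle y-\mu\rangle^{-2a_+}\ge\langle y\rangle^{-2a_+}$, so the pointwise bound by $\langle y\rangle^{-2(a_-+a_+)}$ fails there; the correct statement is that on $\{|y|\ge|y-\mu|\}$ one bounds $\langle y\rangle^{-2a_-}\le\langle y-\mu\rangle^{-2a_-}$ and integrates $\langle y-\mu\rangle^{-2(a_-+a_+)}$ over $\{|y-\mu|\ge|\mu|/4\}$, while the bound you wrote holds on the complementary piece $\{|y-\mu|\ge|y|\}$. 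Since you treat the two subpieces symmetrically (``with the roles exchanged''), this is only a labelling slip and the proof goes through once the two cases are interchanged.
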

 The proof  of Proposition \ref{bilin} follows closely the one of Proposition \ref{bilincrit} except in the region
  $ \sigma_2 $-dominant where we use the approach developed in \cite{KPV}.
Recalling the notation used in \eqref{bilincrit3bb}--\eqref{bilincrit3bbb},
we need to prove that
\begin{equation} \label{bilin2}
\big|K\big| \lesssim \|h\|_{L^2_{x,t}}\|f\|_{L^2_{x,t}}\big(\|u\|_{L^2_{x,t}}+\|u\|_{L^4_{x,t}}
+\|u\|_{X^{-\theta,\theta}}\big),
\end{equation}
where
\begin{equation} \label{bilin3}
K=\int_{\mathcal{D}}\frac{\langle \xi \rangle^{\frac12}}
{\langle \sigma\rangle^{\frac12-2\delta}}\widehat{h} (\xi,\tau)
\frac{\langle \xi_1 \rangle^{-\frac12}}{\langle \sigma_1
\rangle^{\frac12+\delta}}\widehat{f}(\xi_1,\tau_1)
\xi_2\langle \xi_2 \rangle^{-s} \widehat{u}(\xi_2,\tau_2)d\nu.
\end{equation}

For the same reason as in the proof of Proposition \ref{bilincrit}, we can assume that
$|\xi_2| \le 1$. By using a Littlewood-Paley decomposition on $h$, $f$ and $u$, $K$ can be rewritten as
\begin{equation} \label{bilin4}
K=\sum_{N,N_1,N_2} K_{N,N_1,N_2}
\end{equation}
with
\begin{displaymath}
K_{N, N_1,N_2}:= \int_{\mathcal{D}}\frac{\langle \xi \rangle^{\frac12}}
{\langle \sigma\rangle^{\frac12-2\delta}}\widehat{P_Nh} (\xi,\tau)
\frac{\langle \xi_1 \rangle^{-\frac12}}{\langle \sigma_1
\rangle^{\frac12+\delta}}\widehat{P_{N_1}f}(\xi_1,\tau_1)
\xi_2\langle \xi_2 \rangle^{-s} \widehat{P_{N_2}u}(\xi_2,\tau_2)d\nu,
\end{displaymath}
and the dyadic numbers $N, \ N_1$ and $N_2$ ranging from $1$ to $+\infty$.
Moreover, we will denote by $K^{\mathcal{A}_{N,N_2}}_{N, N_1,N_2}$, $K^{\mathcal{B}_{N,N_2}}_{N, N_1,N_2}$, $K^{\mathcal{C}_{N,N_2}}_{N, N_1,N_2}$ the restriction of $K_{N, N_1,N_2}$ to the regions $\mathcal{A}_{N,N_2}$, $\mathcal{B}_{N,N_2}$ and $\mathcal{C}_{N,N_2}$ defined in \eqref{bilincrit4}. Then, it follows that
\begin{equation} \label{bilin5}
\big|K\big| \le \big|K_{\mathcal{A}}\big|+\big|K_{\mathcal{B}}\big|+\big|K_{\mathcal{C}}\big|,
\end{equation}
where
$$
K_{\mathcal{A}}:=\sum_{N, N_1,N_2} J^{\mathcal{A}_{N,N_2}}_{N, N_1,N_2}, \; K_{\mathcal{B}}:=\sum_{N, N_1,N_2} K^{\mathcal{B}_{N,N_2}}_{N, N_1,N_2} \ \mbox{and } \ K_{\mathcal{C}}:=\sum_{N, N_1,N_2} J^{\mathcal{C}_{N,N_2}}_{N, N_1,N_2}\; ,
 $$
so that  it suffices to estimate $\big|K_{\mathcal{A}}\big|$, $\big|K_{\mathcal{B}}\big|$ and $\big|K_{\mathcal{C}}\big|$. Recall that, due to the structure of $\mathcal{D}$, one of the following case must hold:
\begin{enumerate}
\item{} high-low interaction: $N_1 \sim N$ and $N_2 \le N_1$
\item{} high-high interaction: $N_1 \sim N_2$ and $N \le N_1$.
\end{enumerate}

\noindent \textit{Estimate for $\big|K_{\mathcal{A}}\big|$.}
In the first case, it follows from the triangular inequality, Plancherel's identity and
H\"older's inequality that
\begin{displaymath}
\begin{split}
\big|K_{\mathcal{A}}\big| &\lesssim  \|h \|_{L^2_{x,t}}  \sum_{N_1} \sum_{N_2 \le N_1}
\frac{N_1^{\frac12}}{(N_1N_2)^{\frac12-2\delta}}
  \Bigr\|P_{N_1}\big(J_x^{-\frac12}P_{N_1}\Big( \frac{\widehat{f}}{\langle \sigma_1
\rangle^{\frac12+\delta}} \Big)^{\vee}P_-\partial_xJ^{-s}_xP_{N_2}u\big)\Bigl\|_{L^2_{x,t}} \\
& \lesssim
\|h \|_{L^2_{x,t}}  \sum_{N_1} \sum_{N_2 \le N_1}
\frac{N_2^{\frac12-s+2\delta}}{(N_1)^{\frac12-2\delta}}
  \bigr\|P_{N_1}\Big( \frac{\widehat{f}}{\langle \sigma_1
\rangle^{\frac12+\delta}} \Big)^{\vee}\bigl\|_{L^4_{x,t}}
\|P_{N_2}u\|_{L^4_{x,t}} \\
& \lesssim
\|h \|_{L^2_{x,t}}\|u\|_{L^4_{x,t}}\sum_{N_1}N_1^{4\delta-s}
\bigr\|P_{N_1}\Big( \frac{\widehat{f}}{\langle \sigma_1
\rangle^{\frac12+\delta}} \Big)^{\vee}\bigl\|_{L^4_{x,t}}.
\end{split}
\end{displaymath}
Then, it is deduced from the Cauchy-Schwarz inequality in $N_1$ that
\begin{equation} \label{bilin6}
\big|K_{\mathcal{A}}\big| \lesssim \|h \|_{L^2_{x,t}}\Big(\sum_{N_1}
\bigr\|P_{N_1}\Big( \frac{\widehat{f}}{\langle \sigma_1
\rangle^{\frac12+\delta}} \Big)^{\vee}\bigl\|_{L^4_{x,t}}^2\Big)^{\frac12}
\|u\|_{L^4_{x,t}},
\end{equation}
since $s>10 \delta$. On the other, estimate \eqref{bilin6} also holds in the case
of high-high interaction by arguing exactly as in \eqref{bilincrit7}, so that estimate
\eqref{bourgstrich.2} yields
\begin{equation} \label{bilin7}
\big|K_{\mathcal{A}}\big| \lesssim \|h \|_{L^2_{x,t}}
\|f \|_{L^2_{x,t}}\|u\|_{L^4_{x,t}}.
\end{equation}

\noindent \textit{Estimate for $\big|K_{\mathcal{B}}\big|$.}
The estimate
\begin{equation} \label{bilin8}
\big|K_{\mathcal{B}}\big| \lesssim \|h \|_{L^2_{x,t}}
\|f \|_{L^2_{x,t}}\|u\|_{L^4_{x,t}},
\end{equation}
follows arguing as in \eqref{bilin6}.

\noindent \textit{Estimate for $\big|K_{\mathcal{C}}\big|$.}
First observe that
\begin{equation} \label{bilin9}
\big|K_{\mathcal{C}}\big| \lesssim \int_{\widetilde{\mathcal{C}}}\frac{|\xi|^{\frac12}}
{\langle \sigma\rangle^{\frac12-2\delta}}|\widehat{h} (\xi,\tau)|\frac{|\xi_1|^{-\frac12}}
{\langle \sigma_1\rangle^{\frac12+\delta}}|\widehat{f}(\xi_1,\tau_1)|
 \frac{|\xi_2|^{(1+\theta-s)}}{\langle\sigma_2\rangle^{\theta}}
 \frac{\langle\sigma_2\rangle^{\theta}}{|\xi_2|^{\theta}}|\widehat{u}(\xi_2,\tau_2)|d\nu,
\end{equation}
where
\begin{displaymath}
\widetilde{\mathcal{C}}=\big\{(\xi,\xi_1,\tau,\tau_1) \in \mathcal{D} \ |
\ (\xi,\xi_1,\tau,\tau_1) \in \bigcup_{N,N_2}\mathcal{C}_{N,N_2} \big\}.
\end{displaymath}
Since $|\sigma_2|>|\sigma|$ and $|\sigma_2|>|\sigma_1|$ in $\widetilde{\mathcal{C}}$,
\eqref{bilincrit4} implies that $|\sigma_2| \gtrsim |\xi\xi_2|$. Applying twice the Cauchy-Schwarz inequality, it is deduced that
\begin{displaymath}
\big|K_{\mathcal{C}}\big| \lesssim \sup_{\xi_2, \tau_2}\big(L_{\widetilde{\mathcal{C}}}(\xi_2,\tau_2)\big)^{\frac12}
\|f\|_{L^2_{\xi,\tau}}\|g\|_{L^2_{\xi,\tau}}\|h\|_{L^2_{\xi,\tau}},
\end{displaymath}
where
\begin{displaymath}
L_{\widetilde{\mathcal{C}}}(\xi_2,\tau_2)=\frac{|\xi_2|^{2+2(\theta-s)}}{\langle \sigma_2\rangle^{2\theta}}\int_{\mathcal{C}(\xi_2,\tau_2)}\frac{|\xi|
|\xi_1|^{-1}}
{\langle \sigma\rangle^{1-4\delta}\langle \sigma_1\rangle^{1+2\delta}}d\xi_1d\tau_1,
\end{displaymath}
and
\begin{displaymath}
\widetilde{\mathcal{C}}(\xi_2,\tau_2)=\big\{(\xi_1,\tau_1) \in \mathbb R^2 \ | \ (\xi,\xi_1,\tau,\tau_1) \in \mathcal{C} \big\}.
\end{displaymath}
Thus, to prove that
\begin{equation} \label{bilin11}
\big|K_{\mathcal{C}}\big| \lesssim \|h \|_{L^2_{x,t}}
\|f \|_{L^2_{x,t}}\|u\|_{X^{-\theta,\theta}},
\end{equation}
it is enough to prove that $L_{\widetilde{\mathcal{C}}}(\xi_2,\tau_2) \lesssim 1$ for all $(\xi_2,\tau_2) \in \mathbb R^2$. We deduce from \eqref{calculus1} and \eqref{bilincrit4} that
\begin{displaymath}
L_{\widetilde{\mathcal{C}}}(\xi_2,\tau_2)\lesssim\frac{|\xi_2|^{2+2(\theta-s)}}{\langle \sigma_2\rangle^{1+2\delta}}\int_{\xi_1}\frac{|\xi|
|\xi_1|^{-1}}
{\langle \sigma_2+2\xi\xi_2\rangle^{1-4\delta}}d\xi_1,
\end{displaymath}
since $\theta=1+\delta$. To integrate with respect to $\xi_1$, we change variables
\begin{displaymath}
\mu_2=\sigma_2+2\xi\xi_2 \quad \text{so that} \quad d\mu_2=2\xi_2d\xi_1 \quad \text{and}
\quad |\mu_2| \le 4|\sigma_2|.
\end{displaymath}
Moreover, \eqref{bilincrit3b} and \eqref{bilincrit4} imply that
\begin{displaymath}
\frac{|\xi||\xi_1|^{-1}|\xi_2|^{1+2(\theta-s)}}{|\xi_1|^2} \le |\xi\xi_2|^{\frac12+\theta-s}
\lesssim |\sigma_1|^{\frac12+\theta-s}
\end{displaymath}
in $\widetilde{\mathcal{C}}$. Then,
\begin{displaymath}
\begin{split}
L_{\widetilde{\mathcal{C}}}(\xi_2,\tau_2) &\lesssim \frac{|\xi_2|^{1+2(\theta-s)}}{\langle \sigma_2\rangle^{1+2\delta}}
\int_0^{4|\sigma_2|}\frac{|\xi|
|\xi_1|^{-1}}
{\langle \mu_2 \rangle^{1-4\delta}}d\mu_2 \\
&\lesssim \frac{\langle \sigma_2\rangle^{\frac12+\theta-s+4\delta}}{\langle \sigma_2\rangle^{1+2\delta}}
 \lesssim \langle \sigma_2\rangle^{3\delta-s} \lesssim 1,
\end{split}
\end{displaymath}
since $s-3\delta>0$.

Finally, we conclude the proof of Proposition \ref{bilin} gathering \eqref{bilin2}, \eqref{bilin5}, \eqref{bilin7}, \eqref{bilin8} and \eqref{bilin11}.

\vspace{0,5cm}

\noindent \textbf{Acknowledgments.} This work was initiated during a visit of the second author at the L.M.P.T., Université François Rabelais, Tours. He would like to thank the L.M.P.T for the kind hospitality. L.M. was partially supported by the ANR project "Equa-Disp".

\bibliographystyle{amsplain}

\end{document}